\documentclass[11pt]{article}

\usepackage[T1]{fontenc}
\usepackage[utf8]{inputenc}
\usepackage{amsmath,amssymb,amsthm,mathtools}
\usepackage{aliascnt}
\usepackage{graphicx}
\usepackage{geometry}
\usepackage{microtype}
\usepackage{xcolor}
\usepackage{hyperref}
\usepackage{authblk}
\usepackage{comment}
\usepackage{mhequ}

\usepackage{thmtools}
\usepackage[nameinlink,noabbrev]{cleveref}
\usepackage{bbm}
\usepackage{caption}
\hypersetup{
  colorlinks=true,
  linkcolor=blue,
  citecolor=blue,
  urlcolor=blue
}
\newtheorem{theorem}{Theorem}[section]
\newtheorem{proposition}[theorem]{Proposition}
\newtheorem{lemma}[theorem]{Lemma}
\newtheorem{corollary}[theorem]{Corollary}
\theoremstyle{definition}
\newtheorem{assumption}{Assumption}

\newtheorem{remark}[theorem]{Remark}

\Crefname{theorem}{theorem}{theorems}
\Crefname{theorem}{Theorem}{Theorems}
\Crefname{proposition}{proposition}{propositions}
\Crefname{proposition}{Proposition}{Propositions}
\Crefname{lemma}{lemma}{lemmas}
\Crefname{lemma}{Lemma}{Lemmas}
\Crefname{corollary}{corollary}{corollaries}
\Crefname{corollary}{Corollary}{Corollaries}
\Crefname{assumption}{assumption}{assumptions}
\Crefname{assumption}{Assumption}{Assumptions}
\Crefname{definition}{definition}{definitions}
\Crefname{definition}{Definition}{Definitions}
\Crefname{remark}{remark}{remarks}
\Crefname{remark}{Remark}{Remarks}

\newcommand{\Sphere}{\mathbb{S}}
\newcommand{\Torus}{\mathbb{T}}
\newcommand{\R}{\mathbb{R}}
\newcommand{\divg}{\operatorname{div}}
\newcommand{\Hess}{\operatorname{Hess}}
\newcommand{\Ric}{\operatorname{Ric}}

\definecolor{editorange}{RGB}{210,120,20}
\definecolor{editgreen}{RGB}{0,100,0}
\definecolor{editviolet}{RGB}{110,0,150}
\definecolor{editpink}{RGB}{190,40,120}

\def\K{\mathcal K}
\def\Kb{\mathcal K_b}
\def\P{\mathcal P}
\def\M{\mathcal M}
\def\div{\text{div}}
\def\d{\mathrm{d}}
\def\I{\mathrm{I}}
\title{\vspace{-1cm} Quantitative Diffusive Limits for Singular Nonlocal Transport}

\author{
Andrea Agazzi\thanks{
Department of Mathematics and Statistics, University of Bern;
\texttt{andrea.agazzi@unibe.ch}}
\quad
Giuseppe Bruno\thanks{
Department of Mathematics and Statistics, University of Bern;
\texttt{giuseppe.bruno@unibe.ch}}
\quad
Federico Pasqualotto\thanks{
Department of Mathematics, University of California, San Diego;
\texttt{fpasqualotto@ucsd.edu}}
\quad
Philippe Rigollet\thanks{
Department of Mathematics, Massachusetts Institute of Technology;
\texttt{rigollet@math.mit.edu}}
}

\date{\today}

\date{\today}

\begin{document}

\maketitle

\begin{abstract}
We study the nonlocal continuity equation
\[
\partial_t\mu_b
=\operatorname{div}\!\left(
\mu_b\nabla\log\bigl((I-b^2\Delta)^{-1}\mu_b\bigr)
\right)
\]
on a closed connected Riemannian manifold. For smooth strictly positive
initial data, we prove that as $b \to 0$, its global solution converges to heat flow $\mu(t)$ at the
sharp, uniform-in-time rate
\[
\sup_{t\ge0}\|\mu_b(t)-\mu(t)\|_{L^1}\le Cb^2.
\]
The key estimate is the uniform dissipation of a $b$-weighted higher-order
resolvent energy, which yields exponential relaxation despite the absence of
a Wasserstein gradient-flow structure. 

On the circle, we
also analyze the corresponding deterministic $N$-particle dynamics. A weak--strong modulated energy argument gives
\[
\mathbb E\!\left[
\sup_{t\ge0}W_1(\mu_b^N(t),\mu_b(t))
\right]
\le C(Nb)^{-1/2}
\]
for iid initialization. Consequently, the choice $b\asymp N^{-1/5}$
approximates heat flow uniformly in time at rate $N^{-2/5}$.
\end{abstract}

\section{Introduction}\label{sec:setting}\label{sec:framework}

For every smooth positive density $\mu$, the right-hand side of the heat
equation $\partial_t\mu=\Delta\mu$ admits the nonlinear factorization
\[
\Delta\mu=\operatorname{div}(\mu\nabla\log\mu).
\]
Consequently, the heat equation itself can be written as the continuity
equation
\[
\partial_t\mu+\operatorname{div}(\mu v)=0,
\qquad v=-\nabla\log\mu.
\]
This elementary reformulation is the starting point of diffusion--velocity
methods
\cite{degond1990deterministic,lacombe1999presentation,lions2001methode}:
instead of realizing diffusion through stochastic forcing, they transport
particles deterministically along a density-dependent velocity. The identity
is exact for a smooth positive density, but the velocity is undefined on an
empirical measure, whose logarithmic gradient does not exist. This singularity
naturally leads to replacing $\mu$ by a regularized density before evaluating
the velocity. Variational blob methods pursue a related construction by
regularizing the entropy before taking its Wasserstein gradient
\cite{carrillo2019blob}. At a broader level, deterministic interacting
transport also underlies particle-based variational inference, notably Stein
variational gradient descent
\cite{liu2016svgd} and subsequent schemes studied from a
Wasserstein-gradient-flow perspective.

This paper examines a new regularization scheme with uniform approximation guarantees for the heat flow.

\paragraph{Problem setting} Let $(\M,g)$ be a smooth, closed (compact and without boundary), connected Riemannian manifold of dimension $d$ with normalized volume measure $\mathrm{d}x$ and let $\P(\M)$ be the space of probability measures on $\M$. This paper studies the family of equations on $\P(\M)$
\begin{equ}
    \label{eq:pde}
    \partial_t \mu_b = \text{div}\left(\mu_b \nabla \log(\mathcal K_b \mu_b)\right)
\end{equ}
where $\text{div}$, $\nabla$ are the Riemannian divergence and gradient, and, for $b \geq 0$, $\mathcal K_b$ denotes an integral operator acting on measures as follows:
\begin{equ}\label{eq:kernel}
\mathcal K_b \mu(x) =  \int_{\mathcal M} K_b(x,y)\mu(\d y)\,,
\end{equ}
 for a family of (singular) Green kernels $K_b~:~\M\times\M\to \R$ solving, in the distributional sense,
\begin{equ}\label{eq:green}
(\I-b^2\Delta_y)K_b(x,y)=\delta_x(y).
\end{equ}
Here $\delta_x$ denotes the Dirac delta function and $\Delta$, $\I$ are, respectively, the Laplace-Beltrami and identity operators.  Equivalently, applying $\I-b^2\Delta$ to \eqref{eq:kernel}, we may characterize the operator $\Kb$ as the resolvent $(\I-b^2\Delta)^{-1}$.

At a purely formal
level, the approximation of the identity
$\mathcal K_b=\I+O(b^2)$ suggests that solutions of
\eqref{eq:pde} should converge to heat flow as $b\downarrow0$. The main issue addressed here is whether this
approximation remains accurate for arbitrarily large times and whether it continues to hold after replacing the evolving density by finitely many particles.

\paragraph{Main results} Our first main result, \Cref{thm:main_convergence}, proves quantitative, uniform-in-time convergence 
of $\mu_b$ solving \eqref{eq:pde} as $b \to 0$ to the solution $\mu$ of the heat equation 
\begin{equ}\label{eq:heat}
\partial_t \mu = \Delta \mu\, ,
\end{equ} 
with the same smooth initial condition, establishing the estimate (also holding in $L^1$)
$$W_1 (\mu, \mu_b) \leq C b^2\,.$$

Furthermore, because of its transport structure, \eqref{eq:pde} can be viewed as the continuity equation associated with a system of particles interacting through
the convolutional vector field generated by \(\Kb\). This gives a natural
interpretation of \eqref{eq:pde} as describing the dynamics of the empirical
measure \(\mu_b^N=\frac{1}{N}\sum_{i=1}^N\delta_{x_i}\) for a deterministic mean-field particle system, as detailed in Section~\ref{sec:particles_intro}. In our second main result, \Cref{thm:main_s1}, we prove, for $d = 1$, quantitative, uniform-in-time propagation of chaos estimates in $W_1$ that are polynomial in both $b$ and the number $N$ of particles:
$$ \mathbb E\!\left[
\sup_{t\ge0}W_1(\mu_b^N(t),\mu_b(t))
\right]\leq C \frac 1{\sqrt {N b}}\,.$$

Together, our results
\Cref{thm:main_convergence} and \Cref{thm:main_s1} justify  $\mu_b^N$ as an approximation of $\mu$ in the joint limit $b \to 0$ and $Nb \to \infty$. These results readily extend to the presence of an  external, noninteracting gradient field $-\nabla V(x)$ leading to an additional forcing/advection term and to the Fokker-Planck equation 
\begin{equ}\label{eq:fp}
\partial_t\mu=\Delta\mu+\div\!\left(\mu \nabla V\right)
\end{equ}
in the limit. Provided that the drift term is sufficiently regular and confining, this result carries over to the classical setting $\mathcal M = \mathbb R^d$ ($g = \mathbbm{1}$), or without a drif to $\mathcal M = \Torus^d$, so that all operators appearing in the above equations can be interpreted in the usual (non-Riemannian) sense.

\paragraph{Related work}
The present work connects two strands of the literature that have largely developed independently. In each, our results address a natural question left open by the existing theory.

First, in \cite{degond1990deterministic,lacombe1999analyse}, the authors introduce \emph{diffusion-velocity methods}: a family of deterministic, first-order particle methods for simulating second-order equations such as the heat and porous medium equations. While previous works have established mostly qualitative convergence estimates for the porous medium equation \cite{oelschlager1985law, oelschlager2001sequence,lions2001methode,figalli2008convergence,carrillo2016gradient,carrillo2019blob, peletier2025nonlinear} or for the heat equation in the presence of additional regularizing convolution kernels outside the logarithm in \eqref{eq:pde} \cite{craig2025nonlocal,carrillo2024nonlocallinear} (see Remark~\ref{r:singleconv} below), our result quantifies convergence of the dynamics of a diffusion-velocity method to the \emph{heat equation} with a \emph{single} convolution operation inside the logarithm, for the specific choice of kernel \eqref{eq:kernel}. While removing the convolution kernel outside the logarithm destroys the Wasserstein gradient flow structure of the equation, it provides significant advantages from an implementation standpoint: it yields a straightforward particle method and bypasses potentially high-dimensional numerical integration. In this sense, our result provides guarantees for a class of particle-based dynamics that underlie several recent and influential approaches to sampling algorithms; see \cite[Chapter~6]{CheNilRig25} and~\cite{Che26}.

Second, the spherical case $\mathcal M=\mathbb S^d$ is particularly relevant to a recent line of work initiated in \cite{geshkovski2024emergence,geshkovski2025mathematical}, in which Equation~\eqref{eq:pde} emerges as a central object for analyzing signal propagation across the layers of deep transformer architectures, see~\cite{Rig26}. Our results are closely connected to this line of research: the heat equation has been formally identified in \cite{sander2022sinkformers,multiscale2025transformers} as an effective description of information propagation in certain limiting hyperparameter regimes. For the specific family of kernels considered here, we rigorously establish this connection by proving the corresponding diffusion limit. This provides the first fully rigorous derivation of the heat equation as a scaling limit of this class of signal propagation dynamics arising from transformer architectures.

\subsection{Structure of the paper} 
Section~\ref{sec:main_results} specifies the setup, states our main results, and discusses related work. Section~\ref{sec:kernel_and_energy} proves uniform-in-time convergence to the heat equation and establishes the sharpness of the $b^2$ rate. Section~\ref{sec:s1} proves well-posedness of the particle dynamics on the circle and the uniform-in-time mean-field limit.

Throughout the paper, $C$ denotes a positive finite constant whose value may change from line to line. Its dependence on relevant parameters may be indicated explicitly by subscripts, as in $C_{\alpha,\beta}$.

\section{Main results}\label{sec:main_results}

In this section, we provide precise statements for each of our main results. For each of them, we also review relevant previous results in the literature.
We first discuss our choice of kernel and the well-posedness of \eqref{eq:pde}, required for our main results, which are presented in the two following subsections~\ref{subsub:pdeconvergence} and \ref{sec:particles_intro}.

\subsection{Setup}

We recall the setting and notation introduced in Section~\ref{sec:setting}.
Let $(\M,g)$ be a smooth, closed, connected Riemannian manifold of dimension $d$, equipped with its normalized volume measure $\d x$, so that $\int_{\M}\d x=1$. We write $\P(\M)$ for the space of probability measures on $\M$ and identify absolutely continuous measures with their densities relative to $\d x$. For each $b>0$, we study the nonlocal continuity equation \eqref{eq:pde},
\begin{equ}
\partial_t\mu_b
=\div\!\left(\mu_b\nabla\log(\Kb\mu_b)\right),
\qquad \mu_b(0)=\mu_{\mathrm{in}},
\end{equ}
with a common initial probability density $\mu_{\mathrm{in}}$, independent of $b$.  Here and throughout, $\div$, $\nabla$, and $\Delta$ denote the Riemannian divergence, gradient, and Laplace--Beltrami operator, respectively, and $\I$ denotes the identity operator.

\paragraph{The Bessel--Yukawa Kernel}
The convolution operator \(\Kb\) in \eqref{eq:pde} acts as a nonlocal regularization of the density\footnote{In fact, the action of $\Kb$ can also be viewed as a single implicit Euler step for the heat equation in measure space, with time step $b^2$.},
where the parameter \(b>0\) in \eqref{eq:green} governs the length scale of particle interactions. 
Formally, as \(b\downarrow0\), the resolvent \(\Kb=(\I-b^2\Delta)^{-1}\) approaches the identity, and \eqref{eq:pde} reduces to the heat equation.
We further note that the kernel \(K_b\) is singular on the diagonal. More precisely, for $d \geq 2$ , \(K_b\) has a Coulomb-type singularity: if \(r=\operatorname{dist}_{\M}(x,y)\), then as \(r\downarrow0\), 
\begin{equation} 
\label{eq:kernel_singularity}
K_b(x,y)\sim \begin{cases} -b^{-2}\log\!\left(r/b\right), \quad& d=2,\\ b^{-d}(r/b)^{2-d}, & d\ge3, \end{cases} \end{equation}
up to constants depending on \(d\), the geometry of \(\M\), and the normalization of measure. 
In the one-dimensional case $d = 1$,  \(K_b\) remains bounded but exhibits a cusp singularity at the origin. For instance, for \(\theta, \theta'  \in \Sphere^1\simeq\mathbb R/2\pi\mathbb Z\), defining $d_{\Sphere^1}(\theta,\theta') := \min_{m\in\mathbb Z}
\left|\theta-\theta'+2\pi m\right|$ we have
\begin{equation} 
\label{eq:explicit_circle_resolvent_kernel}
K_b(\theta, \theta') = \sum_{k\in\mathbb Z}\frac{e^{ik(\theta- \theta')}}{1+b^2k^2} = \frac{\pi}{b\sinh(\pi/b)} \cosh\!\left(\frac{\pi-d_{\Sphere^1}(\theta,\theta')}{b}\right). \end{equation}

\begin{remark}\label{rem:generative_modeling}
Equation~\eqref{eq:pde} and our convergence theorem are connected to generative modeling in machine learning, in particular the recent drifting models \cite{deng2026drifting,lai2026unified,turan2026drifting}. Given a target density $p$ and a smoothing kernel $K_b$, these models generate new samples from $p$ by approximating the evolution of particles driven by the (measure-dependent) vector field:
\[
v_b=\nabla\log(\Kb p)-\nabla\log(\Kb\mu_b).
\]
For the uniform target $p=1$, this gives exactly \eqref{eq:pde}, since $\Kb1=1$. For a smooth, strictly positive target $p$, the formal limit as $b\to0$ is the Fokker-Planck equation \eqref{eq:fp} with $V = \log p$.

We also note that, while the convergence outlined above occurs, heuristically, for a much wider class of kernels, the specific choice of smoothing kernel appears to affect the accuracy of the approximation in the  finite-$N$ and finite-$b$ regime. Specifically,  the Laplace kernel $k_\tau(x,y)\propto e^{-|x-y|/\tau}$ used in the original paper \cite{deng2026drifting}, and the Bessel-Yukawa kernel \eqref{eq:kernel} appear to  reduce fluctuations with respect to the smoother Gaussian kernel often used in transformers, as reported in \cite[Table~1]{turan2026drifting} and in our own numerical experiments discussed in Appendix~\ref{app:numerics}, respectively.
\end{remark}

\paragraph{Initial condition and well-posedness} Since the Bessel--Yukawa kernel is non-smooth, well-posedness of \eqref{eq:pde} does not follow directly from standard results. We include a dedicated argument for completeness, but streamline the presentation by imposing sufficiently regular and strictly positive initial data; the analysis can be extended to substantially weaker assumptions. Thus, for every $b>0$, we take
\[
\mu_b(0,\d x)=\mu_{\mathrm{in}}(x)\d x,
\]
where $\mu_{\mathrm{in}}$ satisfies the following assumption.

\begin{assumption}\label{assump:initial}
Let $\mu_{\mathrm{in}}\in C^\infty( \M)$ be a probability density on  $\M$ such that
\[
\inf_{x\in  \M}\mu_{\mathrm{in}}(x) > 0.
\]
\end{assumption}
\begin{proposition}\label{thm:pde_global_smooth_existence} Let $\mu_{\mathrm{in}}$ satisfy  \Cref{assump:initial}. For every $b>0$, the Cauchy problem
\eqref{eq:pde} with initial condition $\mu_b(0,\d x)=\mu_{\mathrm{in}}(x)\d x$ admits a unique global classical solution
\[
\mu_b\in C^\infty([0,\infty)\times \M).
\]
This solution satisfies $\int_{ \M}\mu_b(t,x)\,\d x=1,$ for all $t \ge 0$ and  $\mu_b(t,x)>0$ for all $(t,x)\in[0,\infty)\times \M$. 
Moreover,
\[
\|\mu_b(t)\|_{L^\infty( \M)}
\le
\|\mu_{\mathrm{in}}\|_{L^\infty( \M)}
\qquad\text{for all }t\ge0,
\]
and $\K_b\mu_b$ is smooth and strictly positive for all times.
\end{proposition}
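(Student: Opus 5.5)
Write $\rho=\Kb\mu_b$, so $\mu_b=(\I-b^2\Delta)\rho=\rho-b^2\Delta\rho$. The plan is to recast \eqref{eq:pde} as a quasilinear parabolic equation for $\rho$, which removes the singular kernel entirely. Expanding the right-hand side,
\[
\div(\mu_b\nabla\log\rho)=\div\!\Big(\tfrac{\mu_b}{\rho}\nabla\rho\Big),
\]
and writing $\mu_b/\rho = 1-b^2\Delta\rho/\rho$ shows that the equation is not directly parabolic in $\mu_b$. We therefore work in nondivergence form: $\partial_t\mu_b = \frac{\mu_b}{\rho}\Delta\rho+\nabla\mu_b\cdot\nabla\log\rho-\mu_b|\nabla\log\rho|^2$. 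Since $\Delta\rho=(\rho-\mu_b)/b^2$, this becomes
\[
\partial_t\mu_b=\nabla\mu_b\cdot\nabla\log\rho+\frac{\mu_b}{b^2}\Big(1-\frac{\mu_b}{\rho}\Big)-\mu_b|\nabla\log\rho|^2 .
\]
This is a transport equation for $\mu_b$ with a zeroth-order reaction term. The coefficients $\nabla\log\rho$ and $\rho$ are two derivatives smoother than $\mu_b$ by elliptic regularity of the resolvent, so the system is a nonlocal hyperbolic (ODE-type) problem.

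\textbf{Local existence and uniqueness.} We solve by characteristics. Let $X_t$ be the flow of the vector field $-\nabla\log\rho$; along it,
\[
\frac{d}{dt}\mu_b(X_t)=\frac{\mu_b}{b^2}\Big(1-\frac{\mu_b}{\rho}\Big)-\mu_b|\nabla\log\rho|^2 .
\]
A fixed point in $C([0,T];H^s(\M))$ for large $s$, or in $C^{k,\alpha}$, works on a short time interval, provided $\rho$ stays bounded below. This uses the resolvent bounds $\|\rho\|_{H^{s+2}}\le C_b\|\mu_b\|_{H^s}$, together with the commutator/transport energy estimate
\[
\frac{d}{dt}\|\mu_b\|_{H^s}\le C\big(\|\nabla\log\rho\|_{C^{s}},\|\rho^{-1}\|_\infty\big)\,\|\mu_b\|_{H^s}.
\]
The $\rho^{-1}$ bound is available because positivity of $\mu_b$ implies $\rho\ge\min\mu_b>0$: the resolvent is positivity-preserving, since the heat kernel is positive and $\Kb=\int_0^\infty e^{-t}e^{tb^2\Delta}\,\d t$. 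Contraction in a lower norm yields uniqueness. Mass conservation follows from the divergence form, and smoothness in time follows by bootstrapping from the equation.

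\textbf{Global a priori bounds.} First, positivity: along characteristics, $\dot m=m\,F$ with $F$ bounded, so $\mu_b>0$ persists, and hence $\rho>0$ as well. Second, the maximum principle. At a spatial maximum point $x^*$ of $\mu_b$, we have $\nabla\mu_b=0$, and since $\rho=\Kb\mu_b\le\max\mu_b=\mu_b(x^*)$, the reaction term $1-\mu_b/\rho\le0$. The gradient term is also $\le0$. Hence $\max\mu_b$ is nonincreasing, which gives the $L^\infty$ bound. Symmetrically, at a minimum point, $\rho\ge\min\mu_b$ gives $1-\mu_b/\rho\ge0$, but the term $-\mu_b|\nabla\log\rho|^2$ is negative, so we only get $\frac{d}{dt}\min\mu_b\ge-\min\mu_b\cdot\|\nabla\log\rho\|_\infty^2$. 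To control this, note $\|\nabla\rho\|_\infty\le C_b\|\mu_b\|_\infty$, since $\nabla K_b$ is integrable in any dimension, while $\rho\ge\min\mu_b$. This gives $\frac{d}{dt}m\ge -C_b\,\|\mu_{\mathrm{in}}\|_\infty^2/m$, which merely yields a lower bound on $m$ over finite time intervals. That suffices for global existence, since blow-up of the local solution can only occur through loss of positivity or higher norms.

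\textbf{Main obstacle.} The delicate step is the higher regularity on every finite interval $[0,T]$ once $L^\infty$ and $\inf\mu_b$ are controlled. For this I will use the linear-in-$\|\mu_b\|_{H^s}$ transport estimate above. Its coefficients depend only on $\|\rho\|_{C^{s+1}}$, which is controlled by $\|\mu_b\|_{C^{s-1+\alpha}}$. The resulting differential inequality is of Gronwall type and closes inductively in $s$: the $C^0$ bound gives $\rho\in C^{1,\alpha}$, which controls $\mu_b\in C^{1}$ via the differentiated characteristic ODE, and so on. Each level grows at most exponentially, so the local solution continues for all times.
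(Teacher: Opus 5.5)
\paragraph{Gap in the continuation step for $d\ge2$.} Your characteristic reformulation is exactly the ODE $\partial_t\tilde\mu=B_t\tilde\mu+A_t\tilde\mu^2$ that the paper uses. The smooth local theory and the maximum-principle bound are also fine. The continuation step, however, fails at its first rung when $d\ge 2$.

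An $L^\infty$ bound on $\mu_b$ gives $\rho=\Kb\mu_b\in W^{2,p}$ for all $p<\infty$, hence $\rho\in C^{1,\alpha}$ with $\nabla\rho$ only log-Lipschitz. It does not give $\nabla^2\rho\in L^\infty$: the kernel $|\nabla_x^2K_b(x,y)|\sim\operatorname{dist}(x,y)^{-d}$ is of Calder\'on--Zygmund type, and only the trace $\Delta\rho=(\rho-\mu_b)/b^2$ is bounded. Yet $\nabla^2\log\rho$ is exactly what you need:
\begin{itemize}
\item the differentiated characteristic equation for $\nabla\mu_b$ contains the stretching term $(\nabla^2\log\rho)\nabla\mu_b$;
\item the gradient of your reaction term contains $\mu_b\,\nabla^2\log\rho\,\nabla\log\rho$;
\item even a Lipschitz bound on the flow $X_t$ requires $\|\nabla^2\log\rho\|_{L^\infty}$.
\end{itemize}
Your own bookkeeping says that level $s$ needs $\rho\in C^{s+1}$, i.e.\ $\mu_b\in C^{s-1,\alpha}$. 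So level $s=1$ needs $\mu_b\in C^{0,\alpha}$, which the a priori bounds ($\sup$ and $\inf$ of $\mu_b$) do not supply uniformly up to $T_{\max}$. Hence the claim that ``$\rho\in C^{1,\alpha}$ controls $\mu_b\in C^1$'' is false, and the Gronwall ladder never starts. For $d=1$ this step does go through, since there $\rho''=(\rho-\mu_b)/b^2$ is bounded.

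\paragraph{Missing ingredient.} What you need is a Yudovich/Beale--Kato--Majda logarithmic estimate, which is exactly what the paper supplies. Using the cancellation $\nabla^2\Kb 1=0$ and a near/far splitting, one gets
\[
\|\nabla^2\Kb f\|_{L^\infty}\le C_b\|f\|_{L^\infty}\bigl(1+\log(1+[f]_{C^{0,\alpha}}/\|f\|_{L^\infty})\bigr)
\]
(\Cref{lem:wellposed_log_lipschitz}). Set $Z(t):=\int_0^t\|\nabla^2\log\rho(s)\|_{L^\infty}\,ds$. The characteristic representation gives $\|\mu_b(t)\|_{C^{0,\alpha}}\le Ce^{2\alpha Z(t)}$, and combining the two yields $Z'\le C(1+Z)$. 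This gives a Lipschitz bound on the velocity on every finite interval. After that, your induction for $C^{k,\alpha}$, $k\ge1$, goes through; this is Steps 2--3 of \Cref{prop:pde_holder_propagation}.

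In your $H^s$ language the same fix has two parts. First, rewrite the transport estimate (via Kato--Ponce/Moser) so that its coefficient depends only on $\|\nabla^2\log\rho\|_{L^\infty}$, $\|\mu_b\|_{L^\infty}$ and $\inf\rho$, not on $\|\nabla\log\rho\|_{C^s}$. Second, add the bound $\|\nabla^2\log\rho\|_{L^\infty}\le C(1+\log(2+\|\mu_b\|_{H^s}))$ for $s>d/2$. Then $\log(2+\|\mu_b\|_{H^s})$ obeys a linear Gronwall inequality.

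\paragraph{Lower bound on $\min\mu_b$.} There is also a smaller error here. The inequality $m'\ge -C/m$ integrates to $m(t)^2\ge m(0)^2-2Ct$, which reaches zero in finite time, so it does not bound $\min\mu_b$ on arbitrary finite intervals. Replace $\rho\ge\min\mu_b$ by $\rho\ge\kappa_b:=\inf K_b>0$. This follows from positivity of the heat kernel on a closed connected manifold together with $\int\mu_b=1$ (\Cref{lem:Kb_positive}). Then $|\nabla\log\rho|$ and the reaction coefficient are bounded independently of $\min\mu_b$, and the characteristic formula gives $\mu_b\ge(\min\mu_{\mathrm{in}})\,e^{-Ct}$.
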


The proof of \Cref{thm:pde_global_smooth_existence} is analogous
to the Yudovich argument of
\cite[Section~2]{chizat2026quantitative_kmd} for the Riesz vector field $\nabla (-\Delta^s)^{-1}\mu$ (which is linear in $\mu$, unlike the one considered in this work) and is reported in the appendix for completeness. We record the
adapted kernel and well-posedness estimates needed for the present equation in
\Cref{app:kernel_facts,app:pde_wellposedness}. The result is qualitative: the
continuation estimates may depend on the fixed value of $b$. The uniform lower
bounds, decay estimates, and Sobolev bounds used later are stronger
quantitative conclusions proved independently in \Cref{sec:kernel_and_energy}.

\subsection{Quantitative convergence to the heat flow}\label{subsub:pdeconvergence}
We now proceed to state the first main result of this paper, establishing uniform-in-time, quantitative convergence of solutions $\mu_b$ of \eqref{eq:pde} to $\mu$ solving \eqref{eq:heat} as $b \to 0$:

\begin{theorem}\label{thm:main_convergence}
Let $\mu_{\mathrm{in}}$ satisfy \Cref{assump:initial} and let $\mu_b(0,\d x) = \mu(0,\d x)=\mu_{\mathrm{in}}(x)\d x$ for all $b > 0$.  There exist constants $C>0$, and $b_0>0$ depending on $\mu_{\mathrm{in}}$ such that, for every $b>0$ with $b < b_0$ we have
\[
\sup_{t\ge0}\|\mu_b(t)-\mu(t)\|_{L^1( \M)} \le C b^2.
\]
\end{theorem}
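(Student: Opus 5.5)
Set $u_b:=\Kb\mu_b$, so that $\mu_b=(\I-b^2\Delta)u_b$. With this substitution \eqref{eq:pde} becomes
\[
\partial_t\mu_b=\Delta\mu_b+\div\!\bigl(\mu_b\nabla\log u_b-\nabla\mu_b\bigr).
\]
The remainder can be written as
\[
\div\!\bigl(\mu_b\nabla\log u_b-\nabla\mu_b\bigr)=\div\!\Bigl(\mu_b\nabla\log\frac{u_b}{\mu_b}\Bigr),
\qquad
\frac{u_b}{\mu_b}=\frac{u_b}{u_b-b^2\Delta u_b}.
\]
The plan is to show that this remainder is $O(b^2)$ in a norm that Duhamel against the heat semigroup can absorb uniformly in time. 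Heat flow contracts $L^1$ and relaxes exponentially to the uniform density $1$, and $\mu_b$ should relax to $1$ as well. So I would split time: estimate the difference directly on $[0,T]$, and use that both solutions are $O(e^{-ct})$-close to equilibrium for large $t$, with $T\sim\log(1/b)$ if needed.

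\textbf{Step 1: uniform-in-$b$ a priori bounds.} I would show that $\mu_b$ is bounded below uniformly in $b$ and $t$. The evolution for $u_b$ reads
\[
\partial_t u_b=\Kb\div\!\bigl(\mu_b\nabla\log u_b\bigr),
\]
and a maximum principle at the minimum of $u_b$ (where $\nabla u_b=0$ and $\Delta u_b\ge0$) gives $\min u_b$ nondecreasing. Together with $\mu_b\ge\min\mu_b$ from the transport structure, this should give $\inf\mu_b\ge c>0$. The upper bound comes from \Cref{thm:pde_global_smooth_existence}.

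Next I would obtain uniform Sobolev bounds from the dissipation of a $b$-weighted resolvent energy. A natural candidate is
\[
E_k(t)=\sum_{j\le k}\Bigl(\|\nabla^j(u_b-1)\|_{L^2}^2+b^2\|\nabla^{j+1}(u_b-1)\|_{L^2}^2\Bigr).
\]
Linearizing at $\mu\equiv1$, the operator is $\Delta\Kb$, whose spectrum $-\lambda/(1+b^2\lambda)$ is bounded away from $0$ by $\lambda_1/(1+b^2\lambda_1)$. This suggests $E_k'\le -cE_k+(\text{nonlinear terms})$, and the nonlinear terms should close using the lower bound and smallness that holds after some time. One caveat: smoothing from $\Delta\Kb$ is capped at frequency $\sim b^{-1}$, so high-frequency regularity must be propagated from the smooth initial data, not generated. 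This is why the energy is $b$-weighted. The output of this step is $\sup_t\|\mu_b(t)\|_{H^{s}}\le C$ uniformly in $b$, together with
\[
\|\mu_b(t)-1\|_{H^s}\le Ce^{-ct}.
\]
\textbf{This step is the main obstacle.} I expect it to require carefully choosing the commutator structure so that terms like $b^2\nabla\Delta u_b$ are controlled by the weighted part of the energy.

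\textbf{Step 2: the consistency error.} Expanding $\log\bigl(u_b/(u_b-b^2\Delta u_b)\bigr)$, the remainder becomes
\[
b^2\,\div\!\bigl(\mu_b\nabla(\Delta u_b/u_b)\bigr)+O(b^4),
\]
which is bounded in $L^1$ (in fact in $H^{s-4}$) by $Cb^2$ times Sobolev norms of $u_b-1$. By Step 1 this is $\le Cb^2e^{-ct}$, because the term vanishes at the constant state.

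\textbf{Step 3: Duhamel.} Writing $e_b=\mu_b-\mu$, we have $\partial_te_b=\Delta e_b+R_b$ with $e_b(0)=0$. Since $e^{t\Delta}$ is an $L^1$ contraction,
\[
\|e_b(t)\|_{L^1}\le\int_0^t\|R_b(s)\|_{L^1}\,ds\le Cb^2\int_0^\infty e^{-cs}\,ds=Cb^2,
\]
uniformly in $t$. This closes the proof, with $b_0$ coming from requiring $b$ small enough that Step 1's constants are $b$-independent.
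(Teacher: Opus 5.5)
Your Steps 2--3 are essentially the paper's heat comparison (\Cref{prop:w1_b2}). The remainder $\divg(\mu_b\nabla\log(u_b/\mu_b))$ is exactly $b^2\divg F_b$ with $F_b=\nabla\Delta u_b-(\Delta u_b)\nabla\log u_b$, so you need neither a lower bound on $\mu_b$ nor an expansion of the logarithm. Using $L^1$-contractivity of $e^{t\Delta}$ instead of $\|\nabla P_\tau\phi\|_{L^\infty}\le C\tau^{-1/2}\|\phi\|_{L^\infty}$ only costs one extra derivative.

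The gap is Step 1. You rightly flag it as the heart of the matter, but the concrete mechanisms you propose there fail.
\begin{itemize}
\item The minimum principle for $u_b$ is not valid. At a minimum point $x_\ast$ you only get $\bigl((\I-b^2\Delta)\partial_tu_b\bigr)(x_\ast)=\mu_b\Delta u_b/u_b\ge0$. But $\partial_tu_b(x_\ast)=\int K_b(x_\ast,y)\,\divg(\mu_b\nabla\log u_b)(y)\,dy$ is nonlocal, and its sign is not fixed by local information at $x_\ast$.
\item At a minimum of $\mu_b$ you get $\partial_t\mu_b=\mu_b\bigl(\frac{u_b-\mu_b}{b^2u_b}-|\nabla\log u_b|^2\bigr)$, and the gradient term has the wrong sign; this is why the maximum principle only yields the upper bound. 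The characteristic formula gives a lower bound like $c_0\exp(-\int_0^t\|\Delta\log u_b\|_{L^\infty})$, which is uniform only if you already have the decay that Step 1 is meant to produce.
\item Closing the nonlinear terms by ``smallness that holds after some time'' leaves the initial layer, where the data are $O(1)$, unhandled.
\end{itemize}

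The missing idea is a bootstrap in which smallness comes from $b$ rather than from the data, and in which Step 3 is fed back into Step 1.
\begin{itemize}
\item Assume $u_b\ge c_0/2$ and $\mathcal E_m\le2E_0$. Apply $\Delta^m$ to $(\I-b^2\Delta)\partial_tv_b=\Delta v_b-b^2\divg((\Delta v_b)\nabla\log u_b)$ and test against $-\Delta^{m+1}v_b$.
\item The top-order transport term integrates by parts to $\frac{b^2}{2}\int(\Delta^{m+1}v_b)^2\Delta\log u_b\le\frac12\mathcal D_m$. This holds because $b^2\Delta\log u_b\le b^2\Delta u_b/u_b<1$ by positivity of $\mu_b$.
\item The commutator is at most $C(c_0,E_0)\,b^2\mathcal D_m$. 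For small $b$ this gives $\mathcal E_m'\le-\frac{\lambda_1}{4}\mathcal E_m$ for arbitrary, non-small, smooth data.
\item The lower bound is then recovered from the comparison itself. On the bootstrap interval, Duhamel gives $\|u_b-\mu\|_{L^1}\le Cb^2$. The interpolation $\|f\|_{L^\infty}\lesssim\|f\|_{W^{1,\infty}}^{d/(d+1)}\|f\|_{L^1}^{1/(d+1)}$ upgrades this to $\|u_b-\mu\|_{L^\infty}\le Cb^{2/(d+1)}$. Since the heat flow satisfies $\mu\ge c_0$, you get $u_b\ge\frac34c_0$, which closes the bootstrap.
\end{itemize}

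Your minimum-principle instinct can be salvaged in the same way. The identity $\partial_tu_b=\Delta u_b+b^2\Kb[\Delta^2u_b-\divg((\Delta u_b)\nabla\log u_b)]$ gives $\min u_b\ge c_0-Cb^2$ once exponentially decaying $W^{4,\infty}$ bounds are available, but again only inside the bootstrap. As written, your proposal treats the lower bound as an input independent of the energy estimate, and that ordering does not close.
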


\Cref{thm:main_convergence} is proved in \Cref{sec:kernel_and_energy}
and results from combining exponential decay estimates for $\Kb\mu_b-1$,
uniformly in $b$ (\Cref{thm:main_decay}) with Duhamel's
formula to compare $\mu_b$ with the heat flow. The decay estimates
keep the $L^1$ error of order $b^2$ uniformly in time.

We note that, while the Duhamel part of the argument requires sufficient regularity of the kernel $K_b$, obtaining our energy decay estimates requires the same kernel to be sufficiently singular (i.e., repulsive) on the diagonal. Our choice of Bessel--Yukawa kernel\eqref{eq:kernel} solving \eqref{eq:green} strikes a delicate balance between these two conditions.

The energy mechanism enabling our analysis is already visible in the first-order interaction energy associated with the Bessel potential.
Indeed, for a signed density \(\rho\) and
\(w=\Kb\rho\), we have
\begin{equation}
\label{eq:interaction_energy}
\frac12\int_{\M}\int_{\M}K_b(x,y)\rho(x)\rho(y)\,\d x\,\d y
=
\frac12\int_{\M}\bigl(|w|^2+b^2|\nabla w|^2\bigr)\,\d x,
\end{equation}
which controls a \(b\)-weighted \(H^{-1}(\M)\) norm of $\rho$. 
This quantity is monotone in time: if
\(u_b=\Kb\mu_b\), equivalently \((\I-b^2\Delta)u_b=\mu_b\), then testing
\eqref{eq:pde} against \(u_b\) gives
\[
\frac12\frac{d}{dt}\int_{\M}\bigl(|u_b|^2+b^2|\nabla u_b|^2\bigr)\,\d x
+\int_{\M}\frac{\mu_b}{u_b}|\nabla u_b|^2\,\d x=0.
\]

We note that the $b^2$ rate appearing in the statement of Theorem~\ref{thm:main_convergence} is consistent with the leading order  of the formal expansion (in $b$) of \eqref{eq:kernel} following from \eqref{eq:green}. We prove that our rates are sharp in $b$ by providing an example with a matching lower bound on $\M = \mathbb S^1$ in Section~\ref{s:lowerbound}.
\begin{remark}
Our analysis readily extends to dynamics with a smooth external potential \(V\), on a closed manifold, e.g., upon replacing \(\Delta\) by the symmetric diffusion generator
$
L_V := \Delta - \nabla (V \cdot \nabla),
$
working relative to its invariant measure $d\pi_V = Z_V^{-1} e^{-V}\,dx$, and replacing \(K_b\) by \((I-b^2L_V)^{-1}\). These results can also be extended further to \(\mathbb{R}^d\) for sufficiently confining \(V\), provided the invariant measure satisfies a spectral-gap inequality and one establishes the required weighted resolvent and commutator estimates, together with adequate control at infinity. 
\end{remark}

\begin{remark}\label{r:singleconv}
The recent works \cite{craig2025nonlocal,carrillo2024nonlocallinear}  on entropy-regularized nonlocal approximations of
the heat equation lead to continuity equations with velocity fields of the form
\[
-\nabla\bigl(V_\beta*\log(V_\beta*\mu)\bigr)
\qquad\text{and}\qquad
-\nabla\bigl(\varphi_\varepsilon*f'_\varepsilon(\varphi_\varepsilon*\mu)\bigr),
\]
where \(V_\beta\) and \(\varphi_\varepsilon\) are regularizing kernels and $*$ denotes the convolution operation. These
dynamics are Wasserstein gradient flows of regularized entropy functionals, and
involve an \emph{outer} convolution acting on the logarithmic term. By contrast,
in \eqref{eq:pde} the regularization appears only \emph{inside} the logarithm:
the drift is simply
\[
\nabla\log(\Kb\mu_b),
\]
with no further convolution acting on \(\log(\Kb\mu_b)\).

The absence of an outer convolution has both analytical and computational consequences. On the analytical side, it allows us to obtain the nonlocal-to-local limit by comparing the nonlocal evolution directly with the heat equation, thereby avoiding the compactness and commutator arguments used to treat outer-convolution models in \cite{craig2025nonlocal,carrillo2024nonlocallinear}. On the computational side, evaluating \(\Kb\mu_b^N\) for an empirical measure requires a single interaction sum, while an outer convolution introduces an additional integral over the ambient space that must itself be approximated. This difference is particularly consequential in high dimension. The simpler structure comes, however, at the cost of losing the interpretation of the dynamics as the Wasserstein gradient flow of a regularized entropy. The corresponding variational arguments are therefore unavailable, and our proof relies instead on a direct stability comparison with the heat equation.

\end{remark}

\paragraph{Discussion}
The starting point of our scheme is the elementary identity
\[
\Delta\mu=\divg\bigl(\mu\,\nabla\log\mu\bigr),
\]
which recasts diffusion as transport along the velocity field
\(-\nabla\log\mu\). Replacing \(\mu\) inside the logarithm by a smoothed density
turns this into a well-defined, mesh-free particle dynamics. This
\emph{diffusion-velocity} idea goes back to
\cite{degond1990deterministic}, with later developments by
\cite{lions2001methode,lacombe1999analyse,lacombe1999presentation,
mas2002diffusion} and the geometric reformulation of
\cite{brenier2017geometric}. These works introduced the method and documented
its numerical behaviour, but the convergence analysis was left largely open:
\cite{degond1990deterministic} defers the proof to a forthcoming paper, while
\cite{lions2001methode} sketches a proof for a simpler model and uses a system
close to ours only in the numerical experiments. Despite renewed interest in the use of particle methods for efficient sampling algorithms (see, e.g., the recent monograph~\cite{Che26}) rigorous
convergence to the heat equation was not established within this original
diffusion-velocity framework.

A parallel line of work approximates nonlinear diffusion equations
\(\partial_t\mu=\Delta(\mu^m)\) through regularized energies. Variational blob
methods were developed in \cite{craig2016blob,carrillo2019blob}, with complete
convergence results for the quadratic porous medium equation (\(m=2\)) while for \(m>2\) results are conditional  on assumed uniform bounds in $\epsilon$ and time for the solutions. The analysis was extended to general nonlinear diffusions
with \(m>1\) in \cite{burger2023porous,carrillo2023nonlocal}, see also
\cite{craig2023blob,daneri2022deterministic,
di2025approximation, amassad2025deterministic} and the extensions to
systems, higher-order equations (e.g., Cahn-Hilliard), the Landau equation, different manifolds \cite{peletier2025nonlinear}, and optimal control
\cite{doumic2024multispecies,elbar2024limit,carrillo2020particle,carrillo2024landau,craig2025blob}.

Despite the limiting equation being linear, the special case \(m=1\), corresponding to the heat
equation, is the most delicate because the first variation of the entropy is
singular when the solution does not have full support.
This case was treated only recently in
\cite{craig2025nonlocal,carrillo2024nonlocallinear}, see also
Remark~\ref{r:singleconv} for a comparison of the equations. 
While these results admit a larger class of initial conditions by leveraging the Wasserstein gradient flow structure of their equation, they establish \emph{qualitative} convergence, through compactness arguments, on
\emph{finite} time horizons, without an explicit rate between the nonlocal and
local solutions. At the particle level,
\cite{carrillo2024nonlocallinear} obtains quantitative stability estimates
which, for the entropy regularization, require an \emph{exponential}
relation between the number of particles and the inverse regularization scale.
In contrast, our dynamics involve no outer convolution and lack a Wasserstein
gradient-flow structure.

The stochastic counterpart, namely moderately interacting diffusions
converging to the viscous porous medium equation, has a long history starting
with
\cite{oelschlager1985law,oelschlager1990large,oelschlager2001sequence}, see also
\cite{figalli2008convergence,morale2005interacting,
chen2021rigorous}. The splitting
\(W_\beta=V_\beta*V_\beta\) introduced in this literature underlies much of the
later quantitative theory. However, these systems rely on diffusion at the particle
level and are therefore of a different nature from the deterministic dynamics
considered here.

Finally, recent quantitative results involving singular kernels in nonlocal continuity equations have appeared in several related but distinct settings: nonlocal-to-local limits for the quadratic porous-medium equation \cite{carrillo2025rate, carrillo2026new,amassad2025deterministic}  (in \cite{carrillo2025rate} considering the same Bessel-Yukawa kernel in $d = 1$), and Kernel Mean Discrepancy or Stein variational flows with Riesz-type kernels \cite{chizat2026quantitative_kmd,chizat2026quantitative_svgd}. None of those, however, coincide with the setting considered in this paper.

\subsection{Quantitative Propagation of Chaos \texorpdfstring{for $d=1$}{in Dimension One}}\label{sec:particles_intro}

The results presented in the previous section concern smooth solutions starting
from a smooth initial datum. We now turn to the deterministic particle
approximation, in which the initial datum is an empirical measure.
For $N \in \mathbb N$, consider a set of particles with pairwise distinct initial
positions $x_1^0,\dots,x_N^0\in \M$.  Then, the evolution of the empirical measure associated to the particles under \eqref{eq:pde} can be equivalently described in Lagrangian form as
\begin{equ}\label{eq:empirical}
\mu_b^N(t) := \frac{1}{N}\sum_{j=1}^N \delta_{x_j(t)},
\end{equ}
where single particle trajectories $x_i~:~[0, \infty) \to \M$ are solutions to 

\begin{equation}\label{eq:particle_system_main}
\dot x_i(t)
=
-\frac{\sum_{j\neq i}\nabla_x  K_b(x_i(t),x_j(t))}{\sum_{j} K_b(x_i(t),x_j(t))},
\qquad
i=1,\dots,N,
\end{equation}
with initial condition $x_i(0)=x_i^0$ for all $i \in \{1,\dots,N\}$. Here, the vector field on the right-hand side coincides with the particle-wise evaluation of $\nabla \log \K_b \mu_b^N$, under the convention that the derivatives of the self-interaction terms are taken to be zero.

 We now proceed to state our second main result, establishing uniform in time
propagation of chaos estimates, explicit in the localization parameter \(b\),
for the convergence of \(\mu_b^N\) to the solution \(\mu_b\) of
\eqref{eq:pde} as \(N \to \infty\). These estimates are formulated in the
Wasserstein distance: for any \(\nu,\eta \in \P(\M)\), we set
\[
W_1(\nu,\eta)
:= \inf_{\pi \in \Pi(\nu,\eta)}
\int_{\M\times \M} d_g(x,y)\,\pi(\d x,\d y),
\]
where \(d_g\) denotes the Riemannian distance on \(\M\), and
\(\Pi(\nu,\eta)\) is the set of couplings between \(\nu\) and \(\eta\).
We state and prove the result when  $d=1$ on $\mathbb{S}^1$, since every one-dimensional closed connected Riemannian manifold is isometric to a circle. In this setting, global existence of solution to \eqref{eq:particle_system_main} with pairwise distinct trajectories is established in
\Cref{prop:s1_particle_wellposed}. 
\begin{theorem}\label{thm:main_s1}
Let $d= 1$, $\M = \mathbb S^1$ and $\mu_b (0, \d x) = \mu_{\mathrm{in}}(x)\d x$ for $\mu_{\mathrm{in}}$ satisfying \Cref{assump:initial}. Furthermore, let $\{x_i(0)\}_{i=1}^N$ be drawn iid from $\mu_{\mathrm{in}}$. Then there exist $b_0\in(0,1]$ and $C>0$, such that for every $0<b\le b_0$,
\[
\mathbb E\Bigl[\sup_{t\ge0}W_1\bigl(\mu^N_b(t),\mu_b(t)\bigr)\Bigr]
\le
\frac{C}{\sqrt{N b}}.
\]
\end{theorem}

Theorem~\ref{thm:main_s1} is proved in \Cref{sec:s1}.
We note that empirical measures, the main object of this result, are too singular for the Sobolev-based machinery  developed in the previous sections to apply, and as such require a different class of techniques. 
To obtain an
estimate that is both uniform in time and algebraic in $N$ and $b$, we use a
modulated energy, in the spirit of Serfaty's modulated energy method for
Coulomb and Riesz mean-field limits
\cite{serfaty2017mean,duerinckx_serfaty_2020}. 
Two features distinguish our argument from standard modulated energy proofs.
First, the interaction is the resolvent quadratic form, whose cusp singularity
on $\Sphere^1$ is mild enough that the estimate closes without truncating the
kernel and without removing self-interactions from the energy definition. Second, and more substantially, the field $\nabla\log(\Kb\mu)$ is
nonlinear in the density. We control the resulting ratio $\Kb\mu_b/\Kb u^N_b$  through the
positivity of the resolvent quadratic form, and the only a-priori estimate required is the
integrated decay of $\log \Kb\mu_b$ provided by \Cref{thm:main_decay}. It is precisely this decay that
makes the modulated energy estimate uniform in time rather than exponentially
growing in $T$.

The resulting bound has order $(Nb)^{-1/2}$. This scaling coincides with the condition $Nb\to\infty$ for the consistency of the kernel density estimator $\Kb \mu^N_b$ at $t=0$ to the initial condition~\cite{Dev83}.

\begin{remark}
\label{rem:truncation}
The boundedness of $K_b$ in dimension $d=1$ allows us to streamline the proof of \Cref{thm:main_s1} and focus on the main novelty of our argument, namely the nonlinear dependence of the vector field on the measure.
In higher dimensions, \(K_b\) exhibits the singularity of the intrinsic Coulomb kernel, suggesting that the truncation methods developed for singular Coulomb/Riesz interactions, such as those in \cite{duerinckx_serfaty_2020}, provide the natural route for extending the argument. We do not pursue this direction here, since it would require
adapting technical truncation estimates of a type already well developed in the
modulated energy literature, while our focus is on the genuinely different
point, i.e., the nonlinearity of the vector field, already present in dimension one.
\end{remark}

We conclude by combining our main results \Cref{thm:main_convergence} and \Cref{thm:main_s1} to relate the dynamics of the finite-$b$ particle system to the solution $\mu$ to the heat equation \eqref{eq:heat} whenever $b=b_N\to0$ and $Nb_N\to\infty$: 

\begin{corollary}
Let the assumptions of \Cref{thm:main_s1} hold. If $b=b_N\to0$ and $N b_N\to\infty$, then
\[
\mathbb E\Bigl[\sup_{t\ge0}W_1\bigl(\mu_b^N(t),\mu(t)\bigr)\Bigr]\to0.
\]
Moreover, choosing $b_N\asymp N^{-1/5}$, we get the following quantitative rate of convergence:
\[
\mathbb E\Bigl[\sup_{t\ge0}W_1\bigl(\mu_b^N(t),\mu(t)\bigr)\Bigr]\le C N^{-2/5}.
\]
\end{corollary}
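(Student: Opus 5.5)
The corollary follows from the triangle inequality for $W_1$, applied pointwise in time and then to the supremum:
\[
\sup_{t\ge0}W_1\bigl(\mu_b^N(t),\mu(t)\bigr)\le \sup_{t\ge0}W_1\bigl(\mu_b^N(t),\mu_b(t)\bigr)+\sup_{t\ge0}W_1\bigl(\mu_b(t),\mu(t)\bigr).
\]
We take expectations. The second term is deterministic.

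For the first term, \Cref{thm:main_s1} applies once $b_N\le b_0$, which holds for $N$ large since $b_N\to0$. It gives a bound $C(Nb_N)^{-1/2}$.

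For the second term, we convert the $L^1$ estimate of \Cref{thm:main_convergence} into a $W_1$ estimate. Since $\M$ is compact with diameter $D:=\operatorname{diam}(\M)<\infty$, every pair of probability densities $f,g$ satisfies
\[
W_1(f,g)\le \tfrac{D}{2}\|f-g\|_{L^1}.
\]
This follows by coupling the common mass $\min(f,g)$ on the diagonal and transporting the remaining mass, of total $\tfrac12\|f-g\|_{L^1}$, at cost at most $D$. Equivalently, by Kantorovich--Rubinstein duality one tests against $1$-Lipschitz $\varphi$ normalized to oscillate within $[-D/2,D/2]$. Combined with \Cref{thm:main_convergence}, this yields $\sup_t W_1(\mu_b(t),\mu(t))\le CDb^2/2$ for $b<b_0$, again valid for $N$ large.

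Putting the two pieces together,
\[
\mathbb E\Bigl[\sup_{t\ge0}W_1\bigl(\mu_{b_N}^N(t),\mu(t)\bigr)\Bigr]\le C\bigl((Nb_N)^{-1/2}+b_N^2\bigr).
\]
This tends to $0$ whenever $b_N\to0$ and $Nb_N\to\infty$. The finitely many small $N$ with $b_N\ge \min(b_0,1)$ do not affect the limit. For the rate, balancing $(Nb)^{-1/2}=b^2$ gives $b\asymp N^{-1/5}$, and both terms are then of order $N^{-2/5}$. For the finitely many $N$ where $b_N$ exceeds the thresholds, the left side is trivially bounded by $D$, so enlarging $C$ yields the bound for all $N$.

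There is no genuine obstacle. The only points requiring care are the $L^1\to W_1$ comparison on a compact manifold and the fact that the constants in both theorems are independent of $b$ (for $b$ below the thresholds) and of $N$, which is how they are stated.
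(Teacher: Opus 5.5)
Your proposal is correct and follows the paper's own proof. The paper likewise applies the triangle inequality, uses \Cref{thm:main_s1} for the particle term, and converts \Cref{thm:main_convergence} to $W_1$ via an $L^1$ comparison on the compact manifold. The paper uses the cruder constant $\operatorname{diam}(\M)$ where you use $\operatorname{diam}(\M)/2$, and your explicit treatment of the finitely many $N$ with $b_N$ above the thresholds is a harmless refinement of the paper's ``for $N$ large enough.''
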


\begin{proof}
For \(N\) large enough, \(b_N\) is in the range of
\Cref{thm:main_convergence,thm:main_s1}. By the triangle inequality for
\(W_1\),
\[
\mathbb E\Bigl[\sup_{t\ge0}W_1\bigl(\mu_b^N(t),\mu(t)\bigr)\Bigr]
\le
\mathbb E\Bigl[\sup_{t\ge0}W_1\bigl(\mu_b^N(t),\mu_b(t)\bigr)\Bigr]
+
\sup_{t\ge0}W_1\bigl(\mu_b(t),\mu(t)\bigr).
\]
On the compact manifold \(\M\), \(W_1(\rho,\eta)\le
\operatorname{diam}(\M)\|\rho-\eta\|_{L^1(\M)}\) for probability densities $\rho,\eta\in \mathcal{P}(\mathcal{M})$. Hence, using \Cref{thm:main_s1}
for the first term and \Cref{thm:main_convergence} for the second,
\[
\mathbb E\Bigl[\sup_{t\ge0}W_1\bigl(\mu_b^N(t),\mu(t)\bigr)\Bigr]
\le
\frac{C}{\sqrt{N b_N}}+C b_N^2,
\]
which tends to zero under the given assumptions. 

The second statement follows readily from the above display.
\end{proof}
The above result provides the first rigorous
convergence of a diffusion-velocity particle system to the heat equation, with
a regularization that is at once analytically convenient and cheap to simulate,
and with time-uniform quantitative rates at every level: from the regularized
PDE to the heat flow, and from the particles to the regularized PDE. The
higher-dimensional mean-field limit, where $K_b$ carries a genuine
Coulomb-type singularity (\eqref{eq:kernel_singularity}) and the truncation
machinery of \cite{duerinckx_serfaty_2020} is expected to apply, is left for future work.

\paragraph{Discussion} 
The large-\(N\) limit in \Cref{thm:main_s1} is related to the
propagation-of-chaos literature for singular mean-field systems. The classical
Wasserstein stability estimates for Lipschitz vector fields, starting from
Dobrushin's argument \cite{dobrushin1979vlasov} (see also
\cite{sznitman1991topics,chaintron2022propagation}) do not
apply directly here because of the singularity of the kernel. 
Our proof builds instead on the weak--strong approach of the modulated energy framework developed for Coulomb and Riesz interactions \cite{serfaty2017mean,duerinckx2016mean,serfaty2020mean, duerinckx_serfaty_2020,nguyen2021mean,rosenzweig2023global, rosenzweig2026sharp} and, in dimension one, the argument is also close in spirit to \cite{hauray2012mean}. The estimates have to be adapted to the present nonlinear vector field, \( \nabla\log(\Kb\mu), \) with bounds that are quantitative in the regularization parameter \(b\) and uniform in time.

Our particle analysis on $\Sphere^1$ uses the modulated energy method
introduced by \cite{serfaty2017mean} and developed for singular and Riesz
interactions in
\cite{duerinckx_serfaty_2020,bresch2019modulated,rosenzweig2026sharp}.
In its usual form this method controls the mean-field limit of systems whose
force is \emph{linear} in the density, $F=\nabla K*\mu$, with $K$ a Coulomb or
Riesz kernel, as discussed above. The dynamics studied here are different in two respects: the
velocity $-\nabla\log(\Kb\mu)$ is a \emph{nonlinear} function of the density,
and the relevant kernel is the Green kernel \eqref{eq:green} (also interpretable as a resolvent, see \eqref{eq:resolvent})
rather than a
Riesz potential. The modulated energy must therefore be adapted to this
nonlinearity, which we carry out using the positivity of the resolvent
quadratic form.

\section{Convergence to the heat equation}
\label{sec:kernel_and_energy}
This section is devoted to the proof of \Cref{thm:main_convergence}. 
We study equation \eqref{eq:pde} on $\P(\M)$, which we rewrite to streamline notation as 
\begin{equ}\label{eq:pde2}
\partial_t \mu_b
=
\divg\!\left(\mu_b \nabla \log u_b
\right),
\qquad
u_b=\Kb\mu_b.
\end{equ}
where $\mathcal K_b$, defined in \eqref{eq:kernel} and \eqref{eq:green}, can be equivalently defined for $b> 0$ as the resolvent
\begin{equ}\label{eq:resolvent}
\Kb := (\I - b^2 \Delta)^{-1}.
\end{equ}
In other words, in \eqref{eq:pde2}, the drift evolving the probability measure $\mu_b$ is generated by a smoothed version $u_b$ of $\mu_b$ obtained solving on  $\M$
\begin{equ}\label{eq:invresolvent}
\mu_b = u_b - b^2 \Delta u_b.
\end{equ}

The proof of~\Cref{thm:main_convergence} results from the composition of the following two results. The first (\Cref{prop:w1_b2}) is a comparison estimate, based on the Duhamel formula, establishing the desired bound conditional on an exponential decay of $u_b$ to $1$. The second (\Cref{thm:main_decay}) proves that such decay estimates hold based on the dissipation of a suitable energy, defined below.

\label{sec:proofsketch}

\begin{proposition}[Heat Comparison]\label{prop:w1_b2}
Let \(T\in(0,\infty]\) and \(b>0\), and let \(\mu\) and \(\mu_b\)
have the common initial condition
\(\mu(0)=\mu_b(0)=\mu_{\mathrm{in}}\), where
\(\mu_{\mathrm{in}}\) satisfies \Cref{assump:initial}.
Suppose that, for some \(c_\ast,A,\lambda>0\),
\begin{equation}
\label{eq:heat_comp_positivity}
u_b(t,x)\ge c_\ast
\qquad\text{for all }(t,x)\in[0,T)\times \M,
\end{equation}
and
\begin{equation}
\label{eq:heat_comp_sobolev}
\|u_b(t)-1\|_{H^3( \M)}
\le
Ae^{-\lambda t}
\qquad\text{for all }t\in[0,T).
\end{equation}
Then there exists $C>0$, depending only on  $(\M,g)$, $c_\ast,A,\lambda$, such that
for every $t\in[0,T)$,
\[
\|\mu_b(t)-\mu(t)\|_{L^1( \M)}
\le
Cb^2.
\]
\end{proposition}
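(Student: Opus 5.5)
We rewrite the nonlocal equation as a perturbation of the heat equation and compare the two solutions through Duhamel's formula. Since $\mu_b=u_b-b^2\Delta u_b$, we have
\[
\mu_b\nabla\log u_b=\frac{\mu_b}{u_b}\nabla u_b=\nabla u_b-b^2\frac{\Delta u_b}{u_b}\nabla u_b ,
\]
and therefore
\[
\partial_t\mu_b=\Delta u_b-b^2\divg\Bigl(\frac{\Delta u_b}{u_b}\nabla u_b\Bigr).
\]
The linear term is $\Delta u_b=\Delta\mu_b+b^2\Delta^2u_b$. This gives
\[
\partial_t\mu_b-\Delta\mu_b=b^2\divg F_b,\qquad F_b:=\nabla\Delta u_b-\frac{\Delta u_b}{u_b}\nabla u_b .
\]
Each term of $F_b$ contains at least one derivative of $u_b-1$. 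Using \eqref{eq:heat_comp_positivity} and the Sobolev embedding $H^3\hookrightarrow W^{1,\infty}$ is not needed in dimension-free form. We only need $L^2$ and $L^1$ bounds. Indeed, $\|\nabla\Delta u_b\|_{L^2}\le\|u_b-1\|_{H^3}$. For the quadratic term, $\|\frac{\Delta u_b}{u_b}\nabla u_b\|_{L^1}\le c_\ast^{-1}\|u_b-1\|_{H^2}\|u_b-1\|_{H^1}$ by Cauchy--Schwarz. Together with \eqref{eq:heat_comp_sobolev}, this gives
\[
\|F_b(t)\|_{L^1}\le \|F_b(t)\|_{L^2}+c_\ast^{-1}A^2e^{-2\lambda t}\le C(A+A^2/c_\ast)e^{-\lambda t}.
\]

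Next we set $e=\mu_b-\mu$, so that $e(0)=0$ and, by Duhamel,
\[
e(t)=b^2\int_0^t e^{(t-s)\Delta}\divg F_b(s)\,\d s .
\]
The one genuinely analytic ingredient is the heat semigroup bound
\[
\|e^{\tau\Delta}\divg F\|_{L^1}\le C\tau^{-1/2}\|F\|_{L^1}\quad\text{for }\tau\le1 .
\]
On a closed manifold this follows from Gaussian gradient bounds on the heat kernel, $|\nabla_y p_\tau(x,y)|\le C\tau^{-1/2}\tau^{-d/2}e^{-c\,d(x,y)^2/\tau}$, by duality. For $\tau\ge1$ we use the spectral gap instead. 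Since $\divg F$ has zero mean, $\|e^{\tau\Delta}\divg F\|_{L^1}\le Ce^{-\lambda_1(\tau-1)}\|e^{\Delta}\divg F\|_{L^2}$, and smoothing gives $\|e^{\Delta}\divg F\|_{L^2}\le C\|F\|_{L^1}$. Combining the two regimes, the semigroup is bounded by $C\min(\tau^{-1/2},1)e^{-\lambda_1\tau}$ times $\|F\|_{L^1}$.

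Inserting this into the Duhamel formula gives
\[
\|e(t)\|_{L^1}\le Cb^2\int_0^t\min\bigl((t-s)^{-1/2},1\bigr)e^{-\lambda_1(t-s)}e^{-\lambda s}\,\d s .
\]
The integral is bounded uniformly in $t$, because the kernel $\min(\tau^{-1/2},1)e^{-\lambda_1\tau}$ is integrable and $e^{-\lambda s}\le1$. This yields $\|\mu_b(t)-\mu(t)\|_{L^1}\le Cb^2$ for every $t\in[0,T)$. The constant depends only on the geometry, $c_\ast$, $A$ and $\lambda$.

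The main obstacle is the heat-kernel estimate $\|e^{\tau\Delta}\divg\|_{L^1\to L^1}\lesssim\tau^{-1/2}$ on a manifold. One could avoid it by working in $L^2$, which dominates $L^1$ on a normalized space: then $\|e^{\tau\Delta}\divg F\|_{L^2}\le C\tau^{-1/2}e^{-\lambda_1\tau}\|F\|_{L^2}$ follows from spectral calculus alone. The price is that the quadratic term needs $\|F_b\|_{L^2}$, which we bound via $\|\nabla u_b\|_{L^\infty}\le C\|u_b-1\|_{H^3}$. That embedding is valid only for $d\le3$. For general $d$ we therefore take the $L^1$ route above, which only uses the Cauchy--Schwarz bound on the quadratic term.
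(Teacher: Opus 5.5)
Your argument is correct and is essentially the paper's proof. It uses the same forcing $F_b=\nabla\Delta u_b-(\Delta u_b)\nabla\log u_b$, the same Duhamel formula, and the same Cauchy--Schwarz $L^1$ bound on the quadratic term. The key smoothing estimate $\|e^{\tau\Delta}\divg F\|_{L^1}\le C\tau^{-1/2}\|F\|_{L^1}$ is also the same; the paper obtains it by duality from $\|\nabla P_\tau\phi\|_{L^\infty}\le C\tau^{-1/2}\|\phi\|_{L^\infty}$, which is the dual form of your heat-kernel gradient bound.

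The one real difference is how you get uniformity in time. You use the spectral gap on mean-zero data, whereas the paper uses the decay $e^{-\lambda s}$ of $\|F_b(s)\|_{L^1}$; your route shows that a uniform-in-time bound on $\|u_b(t)-1\|_{H^3}$ would already suffice.

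Two slips to fix. Your combined kernel should read $\max(\tau^{-1/2},1)e^{-\lambda_1\tau}$, not $\min$. In your bound on $F_b$, the term $\|F_b\|_{L^2}$ should be $\|\nabla\Delta u_b\|_{L^2}$.
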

We defer the proof of this result to \Cref{subsec:comp_mub_mu}.
We now turn the interaction energy observation from \eqref{eq:interaction_energy} into the
a priori estimate needed for the convergence proof. Since the equilibrium is
the constant density \(1\), the natural variable is the zero-mean fluctuation
\[
v_b:=u_b-1=\mathcal K_b(\mu_b-1)
\]
for $u_b$ defined in \eqref{eq:pde2}. The corresponding first-order energy \eqref{eq:interaction_energy} controls only \(\|v_b\|_{L^2}\) and
\(b\|\nabla v_b\|_{L^2}\). To estimate the nonlinear drift
\(\nabla\log u_b\), however, we need \(W^{2,\infty}\) control of \(v_b\):
by Sobolev embedding on  \(\M\), this follows from an \(H^s\)
bound with \(s> d/2+2\). We therefore use the following
higher-order analogue of the resolvent energy.
Set
\begin{equ}\label{eq:m}
m:=\min\Bigl\{\ell\in\mathbb N:\ 2\ell+1>\frac{ d}{2}+2\Bigr\},
\end{equ}
and define
\begin{equ}\label{eq:energy}
\mathcal E_m[w]
:=
\frac12
\int_{ \M}
\bigl(
|\nabla \Delta^m w|^2
+
b^2 |\Delta^{m+1} w|^2
\bigr)\, dx.
\end{equ}
For the solution \(v_b\), we write \(\mathcal E_m(t):=\mathcal E_m[v_b(t)]\).

The next theorem shows that the energy decays exponentially, uniformly in
$b$, and yields the bounds needed in \Cref{prop:w1_b2}.
 Throughout, we denote by $\lambda_1=\lambda_1(\M,g)>0$ the first positive eigenvalue of $-\Delta$, equivalently the spectral gap on mean-zero functions.

\begin{theorem}[Uniform energy decay and regularity]\label{thm:main_decay}
Let $\mu_b$ solve \eqref{eq:pde} with $\mu_b(0)=\mu_{\mathrm{in}}$ for $\mu_{\mathrm{in}}$ satisfying \Cref{assump:initial}. Set 
\[
c_0:=\inf_{x\in \M}\mu_{\mathrm{in}}(x)>0.
\]
Then there exist $b_0>0$ and $C>0$, depending only on  $(\M,g)$ and $\mu_{\mathrm{in}}$, such that for every $0<b\le b_0$ and every $t\ge0$,
\[
\mathcal E_m(t)\le E_0 e^{-\frac{ \lambda_1}{4}t},
\quad\text{where}\
E_0:=\sup_{0<b\le1}\mathcal E_m(0)<\infty,
\]
\[
u_b(t,x)\ge \frac{3}{4}c_0
\qquad\text{for all }x\in \M,
\]
and
\[
\|u_b(t)-1\|_{H^{2m+1}( \M)}+
\|u_b(t)-1\|_{W^{2,\infty}( \M)}+\|\nabla \log u_b(t)\|_{W^{1,\infty}( \M)}\le C e^{-\frac{ \lambda_1}{8}t}.
\]
\end{theorem}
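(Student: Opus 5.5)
We will prove the theorem with a continuity (bootstrap) argument built on a differential inequality for $\mathcal E_m$. The first step is to derive the equation for $v_b$. Since $\mu_b=(\I-b^2\Delta)u_b$, applying $\Kb$ to \eqref{eq:pde2} gives
\[
(\I-b^2\Delta)\partial_t v_b=\divg\bigl(\mu_b\nabla\log u_b\bigr).
\]
We write the flux as
\[
\mu_b\nabla\log u_b=\frac{\mu_b}{u_b}\nabla v_b,\qquad \frac{\mu_b}{u_b}=1-b^2\frac{\Delta v_b}{u_b}.
\]
So the equation has a linear heat part $\Delta v_b$ plus a nonlinear perturbation $-b^2\divg\bigl(u_b^{-1}\Delta v_b\nabla v_b\bigr)$. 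This perturbation carries an explicit factor $b^2$ but has two extra derivatives.

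Next we test the equation against $-\Delta^{2m+1}v_b$, or equivalently apply $\Delta^m$ and pair with $\Delta^{m+1}v_b$. The left side then produces exactly $\frac{d}{dt}\mathcal E_m$; this is why $\mathcal E_m$ carries the $b^2$ weight. The linear term gives the dissipation
\[
D:=\int_{\M}\frac{\mu_b}{u_b}\,|\Delta^{m+1}v_b|^2\,dx
\]
at top order, plus commutators in which derivatives land on $\mu_b/u_b$. We must not use $\mu_b/u_b\ge0$ naively, because $\mu_b$ itself contains $b^2\Delta u_b$. Instead we expand $\mu_b/u_b=1-b^2\Delta v_b/u_b$, which leaves a clean term
\[
-\int_{\M}|\Delta^{m+1}v_b|^2\,dx=-\|\nabla\Delta^m v_b\|_{\dot H^1}^2
\]
and a remainder $R$. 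Every piece of $R$ either carries a factor $b^2$ or is lower order. Using the Leibniz rule and Moser/Kato--Ponce estimates, the top-order piece of $R$ is $b^2\int u_b^{-1}\nabla\Delta v_b\cdot\nabla v_b\ldots$ paired with $\Delta^{2m+1}v_b$, and we want to bound it by
\[
C\bigl(c_*^{-1}\bigr)\,\|v_b\|_{W^{2,\infty}}\Bigl(\|\Delta^{m+1}v_b\|_{L^2}^2+b^2\|\nabla\Delta^{m+1}v_b\|_{L^2}^2\Bigr).
\]
The $b^2\|\nabla\Delta^{m+1}v_b\|^2$ term is the hardest. To absorb it we need the extra dissipation, which we extract from the $b^2$-weighted part of the nonlinear term after integrating by parts. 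It enters with a good sign precisely because the kernel is the resolvent: pairing the flux with $\nabla\Delta^{2m+1}$ of $u_b$ reproduces the positive form $\int\frac{\mu_b}{u_b}|\nabla\cdot|^2$ from the first-order identity \eqref{eq:interaction_energy}. Combining this with the spectral gap $\|\Delta^{m+1}v_b\|^2\ge\lambda_1\|\nabla\Delta^m v_b\|^2$ and $b^2\|\nabla\Delta^{m+1}v_b\|^2\ge\lambda_1 b^2\|\Delta^{m+1}v_b\|^2$, we aim to show that if $\|v_b\|_{W^{2,\infty}}\le\varepsilon$, then
\[
\frac{d}{dt}\mathcal E_m\le-\frac{\lambda_1}{4}\,\mathcal E_m.
\]

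The bootstrap then runs as follows. On mean-zero functions, Poincaré together with the Sobolev embedding $H^{2m+1}\subset W^{2,\infty}$ (valid since $2m+1>d/2+2$) gives
\[
\|v_b\|_{H^{2m+1}}^2\le C\,\|\nabla\Delta^m v_b\|_{L^2}^2\le C\,\mathcal E_m .
\]
We note that $v_b$ has mean zero, since $\Kb$ preserves the mean. Because $v_b(0)=\Kb(\mu_{\mathrm{in}}-1)$, we have $\|v_b(0)\|_{W^{2,\infty}}\le\|\mu_{\mathrm{in}}-1\|_{C^2}$ uniformly in $b$, so $E_0<\infty$. In addition, $u_b(0)=\Kb\mu_{\mathrm{in}}\ge c_0$ by the maximum principle. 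Smallness of $\|v_b\|_{W^{2,\infty}}$ is not automatic from the data, since $\mu_{\mathrm{in}}$ is not assumed close to $1$. We handle this first by using the $L^2$ and lower-order dissipation, namely the first-order identity displayed after \eqref{eq:interaction_energy} together with the analogous intermediate levels. These give exponential decay in low norms, and interpolation with a priori (possibly growing but controlled) high norms then places the solution in the small regime after a time $t_0$ independent of $b$; alternatively we exploit that the top-order bound only needs the ratio $\|v_b\|_{W^{2,\infty}}/c_*$ to be small relative to the $\lambda_1$ gain. Once $v_b$ is in the small regime, let $T^*$ be the maximal time on which $u_b\ge\frac34c_0$ and $\|v_b\|_{W^{2,\infty}}\le\varepsilon$ hold. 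On $[0,T^*)$ the energy decays, so $\|v_b\|_{W^{2,\infty}}\le CE_0^{1/2}e^{-\lambda_1t/8}$. We use the $L^\infty$ maximum-principle bound from \Cref{thm:pde_global_smooth_existence} together with the decay to keep $u_b\ge\frac34c_0$, and continuity of the solution then closes the bootstrap, giving $T^*=\infty$. The final estimates follow: $\|\nabla\log u_b\|_{W^{1,\infty}}\le C(c_0)\|v_b\|_{W^{2,\infty}}$, and the decay rate $\lambda_1/8$ is the square root of the energy rate. The main obstacle is the top-order commutator estimate and absorbing its $b^2$-weighted highest derivative, uniformly in $b$.
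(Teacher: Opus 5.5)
Your energy step does not close as written. You propose to absorb a term $b^2\|\nabla\Delta^{m+1}v_b\|_{L^2}^2$ with an ``extra dissipation'' coming from the resolvent structure, but no such dissipation exists. Testing $(\I-b^2\Delta)\partial_t v_b=\Delta v_b-b^2\divg\bigl((\Delta v_b)\nabla\log u_b\bigr)$ against $-\Delta^{2m+1}v_b$ gives exactly
\[
\frac{d}{dt}\mathcal E_m+\mathcal D_m=b^2\int_{\M}\Delta^m\divg\bigl((\Delta v_b)\nabla\log u_b\bigr)\,\Delta^{m+1}v_b\,dx,
\]
and $\mathcal D_m=\|\Delta^{m+1}v_b\|_{L^2}^2$ is the only dissipation available. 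The linear operator is $\Kb\Delta$, whose damping rate $\lambda/(1+b^2\lambda)$ saturates at $b^{-2}$, so no $b^2$-weighted quantity of order $2m+3$ is ever produced. The missing observation is that the top-order piece is a transport term and costs no derivative. With $f=\Delta^{m+1}v_b$ and $X=\nabla\log u_b$, one has $b^2\int\divg(fX)f\,dx=\frac{b^2}{2}\int f^2\Delta\log u_b\,dx$. Positivity of $\mu_b=u_b-b^2\Delta u_b$ (exactly the sign you chose not to use) gives $b^2\Delta\log u_b\le b^2\Delta u_b/u_b<1$ pointwise, so this term is at most $\frac12\mathcal D_m$. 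What remains is the commutator $\Delta^m\divg(fX)-\divg((\Delta^m f)X)$ with $f=\Delta v_b$. It involves at most $2m+2$ derivatives of $v_b$ and is bounded by $C\|v_b\|_{H^{2m+2}}\le C\mathcal D_m^{1/2}$ (\Cref{lem:integer_commutators,cor:compact_remainder}), so it contributes only $Cb^2\mathcal D_m$.

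This also shows that the small parameter is $b$, not $v_b$. Since the whole nonlinearity carries $b^2$, the remainder constant only needs $O(1)$ control of $\|v_b\|_{W^{2,\infty}}$ and $u_b^{-1}$. The paper bootstraps on $\mathcal E_m\le 2E_0$ and $u_b\ge c_0/2$, both valid at $t=0$, and for $b\le b_0$ obtains $\frac{d}{dt}\mathcal E_m\le-\frac{\lambda_1}{4}\mathcal E_m$ from $t=0$ on. Your requirement $\|v_b\|_{W^{2,\infty}}\le\varepsilon$ does not follow from the data. Your route to it (low-norm decay interpolated against ``possibly growing but controlled'' high norms up to a $b$-independent time) presupposes the uniform-in-$b$ high-norm control the theorem is meant to establish. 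Even if completed, it would not give $\mathcal E_m(t)\le E_0e^{-\lambda_1t/4}$ on the initial layer.

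The lower bound on $u_b$ is also unjustified there. The maximum principle in \Cref{thm:pde_global_smooth_existence} only bounds $\mu_b$ from above. There is no minimum principle: at a minimum of $\mu_b$, the term $-|\nabla\log u_b|^2$ in $\Delta\log u_b$ has the wrong sign. And decay of $\|v_b\|_{W^{2,\infty}}$ with an $O(1)$ prefactor gives no lower bound for small $t$. The paper closes this by comparison with the heat flow. \Cref{prop:w1_b2}, applied on the bootstrap interval, gives $\|u_b-\mu\|_{L^1}\le Cb^2$. The bootstrap $W^{1,\infty}$ bound and \Cref{lem:l1_to_linf} upgrade this to $\|u_b-\mu\|_{L^\infty}\le Cb^{2/(d+1)}$. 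Since $\mu\ge c_0$, this yields $u_b\ge\frac34 c_0$ for small $b$, which closes the bootstrap.
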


\noindent
This is proved in \Cref{sec:preliminaries} and \Cref{sec:proofintermediate} below. We now proceed to use this result to
conclude the proof of \Cref{thm:main_convergence}.

\begin{proof}[Proof of \Cref{thm:main_convergence}]
Setting \(\lambda:=\lambda_1/8\), by \Cref{thm:main_decay} we have that, for $b \leq b_0$
\[
\|u_b(t)-1\|_{H^{2m+1}(\M)}
\le
Ce^{-\lambda t}.
\]
    Since \(2m+1>d/2+2\), in particular \(2m+1\ge3\), and together with the lower bound $u_b\geq \frac{3}{4}c_0$ we can apply \Cref{prop:w1_b2}, proving the claim.
\end{proof}

\begin{remark}\label{rem:entropy_route}
A similar uniform-in-time convergence statement can also be obtained by
studying the evolution of the relative entropy, or Kullback--Leibler
divergence, between $\mu_b$ and the heat flow $\mu$:
\[
\mathcal H(\mu_b\mid\mu)
:=
\int_\M \mu_b\log\frac{\mu_b}{\mu}\,dx .
\]
We include a sketch of
this alternative argument in \Cref{app:entropy_convergence}.  We do not follow
this route in the main proof because the particle analysis in \Cref{sec:s1} requires a
uniform estimate on
\[
\|\nabla^2\log u_b\|_{L^\infty(\M)}
\]
and such a bound is not an obvious byproduct of the entropy argument, whereas
it follows directly from the interaction-energy estimates used above.
\end{remark}

We now proceed to prove the main results of this section.

\subsection{Proof of \Cref{prop:w1_b2}}
\label{subsec:comp_mub_mu}
We aim to control the difference
\[
\nu_b:=\mu_b-\mu
\]
in $L^1$, where $\mu$ solves \eqref{eq:heat}.
Combining \eqref{eq:pde2} with \eqref{eq:invresolvent}
we obtain
\[
\partial_t\mu_b
=
\divg\bigl(\nabla u_b-b^2(\Delta u_b)\nabla\log u_b\bigr)
=
\Delta \mu_b + b^2\Delta^2 u_b-b^2\divg\bigl((\Delta u_b)\nabla\log u_b\bigr),
\]
and since $\mu$ evolves according to \eqref{eq:heat} we have
\begin{equation}\label{eq:comparison_error_eq}
\partial_t\nu_b-\Delta\nu_b
=
b^2\divg F_b,
\qquad
F_b:=\nabla\Delta u_b-(\Delta u_b)\nabla\log u_b.
\end{equation}
Moreover, $\nu_b(0)=0$ because $\mu_b(0)=\mu(0)=\mu_{\mathrm{in}}$.

Let now
$P_t:=e^{t\Delta}$ be the heat semigroup on  $\M$. The Duhamel
formula for \eqref{eq:comparison_error_eq} gives
\begin{equ}\label{eq:duhamel}
\|\nu_b(t)\|_{L^1( \M)}
\le
b^2\int_0^t \|P_{t-s}\divg F_b(s)\|_{L^1( \M)}\,ds.
\end{equ}
Using $L^1-L^\infty$ duality, self-adjointness of $P_t$, and integration by parts  we can bound the integrand as follows:
\begin{equs}
\|P_{t-s}\divg F_b(s)\|_{L^1}
& \le
\sup_{\|\phi\|_{L^\infty}\le 1}
\int_{ \M} |F_b(s)|\,|\nabla P_{t-s}\phi| \, \d x
\\
&\le
C(t-s)^{-1/2}\|F_b(s)\|_{L^1},\label{eq:bound1}
\end{equs}
where in the last line we have used a classical smoothing property of the heat semigroup, i.e., the existence of a constant $C(\M)$ such that
\begin{equ}
\|\nabla P_\tau\phi\|_{L^\infty( \M)}
\le
C( \M) \tau^{-1/2}\|\phi\|_{L^\infty( \M)},
\end{equ}
proved for completeness in the manifold setting in \Cref{lem:heat_semigroup_smoothing}.
 Since for any $\lambda > 0$ and for all $t$
 $$\int_0^t (t-s)^{-1/2}e^{-\lambda s}\,ds \leq  \int_0^\infty |t-s|^{-1/2}e^{-\lambda s}\,ds\leq C_\lambda<\infty$$
 the desired, uniform-in-time bound 
\begin{equation}\label{eq:comparison_l1}
\|\mu_b(t)-\mu(t)\|_{L^1( \M)}\le Cb^2
\end{equation}
follows immediately combining \eqref{eq:duhamel} and \eqref{eq:bound1}, conditionally on establishing the  uniform-in-$b$ dissipation estimate
\begin{equ}
    \|F_b(s)\|_{L^1} = \|\nabla\Delta u_b(s)-(\Delta u_b(s))\nabla\log u_b(s)\|_{L^1} \leq C e^{- \lambda s}.\label{eq:bound_F}
\end{equ}

Since
\(\Delta 1=0\), we have
\[
\|\nabla\Delta u_b(t)\|_{L^1}
=
\|\nabla\Delta(u_b(t)-1)\|_{L^1}
\le
C\|u_b(t)-1\|_{H^3}
\le
Ce^{-\lambda t}.
\]
Similarly,
\[
\|\Delta u_b(t)\|_{L^2}
=
\|\Delta(u_b(t)-1)\|_{L^2}
\le
C\|u_b(t)-1\|_{H^2}
\le
Ce^{-\lambda t}
\]
and, since $u_b\geq c_*$,
\[
\|\nabla\log u_b(t)\|_{L^2}
\le
C\|\nabla u_b(t)\|_{L^2}
\le
Ce^{-\lambda t}.
\]
Therefore, 
\[
\begin{aligned}
\|F_b(t)\|_{L^1}
&\le
\|\nabla\Delta u_b(t)\|_{L^1}
+
\|\Delta u_b(t)\|_{L^2}
\|\nabla\log u_b(t)\|_{L^2}  \\
&\le
Ce^{-\lambda t}
+
Ce^{-2\lambda t}
\le
Ce^{-\lambda t}.
\end{aligned}
\]
yielding \eqref{eq:bound_F} and \eqref{eq:comparison_l1} by the preceding comparison principle.

\subsection{Preliminaries to the proof of \Cref{thm:main_decay}}
\label{sec:preliminaries}
\subsubsection{Kernel and Regularity estimates}

Two other properties of $K_b$ are needed in the proofs below: its Sobolev regularity at
the pole, where it is not smooth, and its strict positivity on nonnegative
measures. We record them in the next two lemmas.

\begin{lemma}[Sobolev regularity of the Bessel--Yukawa kernel]\label{lem:Kb_kernel_sobolev}
For every $b>0$ and every $y\in \M$, the Green kernel
$K_b(\cdot,y)$ of $\I-b^2\Delta$ satisfies
\[
K_b(\cdot,y)\in H^s( \M)
\qquad\Longleftrightarrow\qquad
s<2-\frac{ d}{2}=\frac{ 4-d}{2}.
\]
In particular, $K_b(\cdot,y)\notin H^{2- d/2}( \M)$, and the
threshold is sharp. Moreover,
\[
K_b(\cdot,y)\in C^\infty( \M\setminus\{y\}).
\]
\end{lemma}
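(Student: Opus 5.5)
The natural approach is spectral. Let $\{\phi_k\}_{k\ge0}$ be an $L^2(\M)$-orthonormal basis of real eigenfunctions of $-\Delta$, with eigenvalues $0=\lambda_0<\lambda_1\le\lambda_2\le\cdots$. Then $K_b(x,y)=\sum_k (1+b^2\lambda_k)^{-1}\phi_k(x)\phi_k(y)$ in the sense of distributions, and
\[
\|K_b(\cdot,y)\|_{H^s}^2\simeq\sum_k (1+\lambda_k)^{s}(1+b^2\lambda_k)^{-2}|\phi_k(y)|^2 .
\]
The equivalence $K_b(\cdot,y)\in H^s$ iff $s<2-d/2$ therefore reduces to deciding when $\sum_k \lambda_k^{s-2}|\phi_k(y)|^2$ converges.

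Convergence is governed by the local Weyl law. By Hörmander's pointwise estimate for the spectral function,
\[
\sum_{\lambda_k\le\Lambda}|\phi_k(y)|^2=c_d\Lambda^{d/2}+O(\Lambda^{(d-1)/2}),
\]
uniformly in $y$. A dyadic decomposition $\Lambda\in[2^j,2^{j+1})$ then shows that each block contributes about $2^{j(s-2+d/2)}$, with a two-sided bound for $j$ large. The series converges iff $s-2+d/2<0$, which gives both directions, including divergence at the endpoint $s=2-d/2$.

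To avoid relying on the sharp local Weyl law, I would prefer a more elementary alternative for the two directions. For sufficiency, use the heat kernel: $K_b=\int_0^\infty e^{-t}p_{b^2t}\,dt$, and the on-diagonal bound $p_\tau(y,y)\le C\tau^{-d/2}$ for $\tau\le1$ (with $p_\tau(y,y)\to1$ as $\tau\to\infty$) controls $\sum_k e^{-\tau\lambda_k}|\phi_k(y)|^2=p_\tau(y,y)$. Summation by parts, or a Laplace-transform/Tauberian argument, then yields membership for $s<2-d/2$. For necessity, work in normal coordinates near $y$: the explicit leading singularity \eqref{eq:kernel_singularity}, namely $r^{2-d}$, $\log r$, or the cusp $|r|$ when $d=1$, is not in $H^{2-d/2}$ locally. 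This is a standard Fourier computation, since the Fourier transform of the leading term decays like $|\xi|^{-2}$, and $\int|\xi|^{2s-4}\,d\xi$ diverges at infinity iff $2s-4\ge -d$. The remainder, $K_b$ minus a parametrix, is smoother by the usual parametrix construction, so the failure persists.

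Smoothness away from the pole follows from elliptic regularity. On $\M\setminus\{y\}$ the kernel satisfies $(\I-b^2\Delta_x)K_b(\cdot,y)=0$ in the distributional sense; $K_b$ is symmetric, so the equation holds in either variable. Since $\I-b^2\Delta$ is elliptic with smooth coefficients, Weyl's lemma and hypoellipticity give $K_b(\cdot,y)\in C^\infty(\M\setminus\{y\})$.

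The main obstacle is the sharp necessity at the endpoint on a general manifold. This requires either the pointwise Weyl asymptotics, which is a lower bound uniform in $y$, or a careful parametrix argument in which the leading Euclidean singularity must be shown to dominate the correction terms in $H^{2-d/2}$. I would use the parametrix route, taking $\chi(r)\,G_d(r/b)b^{-d}$ with $G_d$ the Euclidean Bessel potential, so that the remainder lies in $H^{2-d/2+1}$ locally.
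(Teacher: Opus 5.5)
Your proposal is correct, and its core reduction is the same as the paper's. Up to $b$-dependent constants, the sum $\sum_k(1+\lambda_k)^{s-2}|\phi_k(y)|^2$ you reach is exactly $\|\delta_y\|_{H^{s-2}(\M)}^2$. The paper gets this in one line from $(\I-b^2\Delta)K_b(\cdot,y)=\delta_y$ and the isomorphism $\I-b^2\Delta:H^s(\M)\to H^{s-2}(\M)$.

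You diverge only in deciding when $\delta_y\in H^{-\alpha}(\M)$. You use global spectral asymptotics (H\"ormander's local Weyl law, or heat-kernel bounds with summation by parts) and, for the endpoint, a parametrix in normal coordinates. The paper just quotes the standard fact that $\delta_y\in H^{-\alpha}$ if and only if $\alpha>d/2$, and that fact is purely local. Write $\delta_y=\chi\delta_y$ for a cutoff $\chi$ supported in a chart with $\chi\equiv1$ near $y$. Cutoffs and coordinate changes preserve $H^{-\alpha}$, so membership reduces to $\int_{\R^d}(1+|\xi|^2)^{-\alpha}\,d\xi<\infty$. That settles both directions, endpoint included. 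So the ``main obstacle'' you name is not really there, and your parametrix argument re-derives with more effort what the elliptic isomorphism gives directly.

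Your route buys quantitative information the lemma does not need. It gives uniformity in $y$, and an explicit $b$-dependence through formulas such as $\sum_k(1+b^2\lambda_k)^{-2}|\phi_k(y)|^2=\int_0^\infty te^{-t}p_{b^2t}(y,y)\,dt$.

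If you prefer to stay spectral, a single argument handles both directions. Use $\sum_k(1+\lambda_k)^{-\alpha}|\phi_k(y)|^2=\Gamma(\alpha)^{-1}\int_0^\infty t^{\alpha-1}e^{-t}p_t(y,y)\,dt$ for $\alpha>0$, together with the two-sided bound $p_t(y,y)\asymp t^{-d/2}$ for small $t$. This avoids the sharp Weyl remainder and the separate parametrix.

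Your smoothness argument via hypoellipticity matches the paper's appeal to interior elliptic regularity. Your remark that symmetry of $K_b$ moves the defining equation to the first variable fills a small point the paper leaves implicit.
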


\begin{proof}[Proof of \Cref{lem:Kb_kernel_sobolev}]
The point mass $\delta_y$ belongs to $H^{-\alpha}(\M)$ exactly when
$\alpha> d/2$. Since
\[
(\I-b^2\Delta)K_b(\cdot,y)=\delta_y
\]
and $\I-b^2\Delta:H^s(\M)\to H^{s-2}(\M)$ is an
isomorphism for all $s\in\mathbb R$, the claimed Sobolev threshold follows.
Smoothness away from the pole follows from interior regularity, for instance
\cite[Theorem~6.17]{gilbarg_trudinger_2001}.
\end{proof}

\begin{lemma}[Positivity of the kernel]\label{lem:Kb_positive}
For each $b>0$, the operator $\Kb=(\I-b^2\Delta)^{-1}$ is positive on
finite nonnegative measures on  $\M$. More precisely, if $\nu$ is a
finite nonnegative Borel measure with $\nu( \M)>0$, then 
\[
\Kb\nu(x)>0
\qquad\text{for every }x\in \M.
\]
\end{lemma}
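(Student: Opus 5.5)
The plan is to prove that the Green kernel itself is strictly positive: $K_b(x,y)>0$ for all $x\neq y$, and in fact $\inf_{x,y}K_b(x,y)>0$. Granting this, for a finite nonnegative measure $\nu$ with $\nu(\M)>0$ we get
\[
\Kb\nu(x)=\int_\M K_b(x,y)\,\nu(\d y)\ \ge\ \Bigl(\inf_{x,y\in\M}K_b(x,y)\Bigr)\,\nu(\M)>0 ,
\]
and the integral is well defined in $(0,\infty]$ because the integrand is nonnegative. In $d\ge2$ the integral may equal $+\infty$ at some points, which is harmless for strict positivity.

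To get positivity of the kernel I would use the subordination formula for the resolvent,
\[
(\I-b^2\Delta)^{-1}=\int_0^\infty e^{-s}\,e^{s b^2\Delta}\,\d s,
\qquad
K_b(x,y)=\int_0^\infty e^{-s}\,p_{sb^2}(x,y)\,\d s ,
\]
where $p_\tau$ is the heat kernel of $\M$. Spectrally, this is the identity $\int_0^\infty e^{-s(1+b^2\lambda)}\,\d s=(1+b^2\lambda)^{-1}$ applied to each eigenvalue $\lambda$ of $-\Delta$. This representation holds in the distributional sense, and both sides solve \eqref{eq:green}.

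On a closed connected manifold the heat kernel is smooth and strictly positive for $\tau>0$. This follows from the strong maximum principle for the heat equation, or from the Li--Yau Gaussian lower bounds. Moreover $p_\tau\to1$ uniformly as $\tau\to\infty$, because the volume is normalized and $\|p_\tau-1\|_{L^\infty}\le Ce^{-\lambda_1\tau}$ for $\tau\ge1$. Hence there is $\tau_0$ with $p_\tau\ge\tfrac12$ for $\tau\ge\tau_0$. Using nonnegativity of $p$ for small times, this gives
\[
K_b(x,y)\ \ge\ \int_{\tau_0/b^2}^\infty e^{-s}\,\tfrac12\,\d s\ =\ \tfrac12\,e^{-\tau_0/b^2}>0,
\]
uniformly in $x,y$. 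Fubini/Tonelli, justified by nonnegativity, then transfers this bound to $\Kb\nu$ as in the first display.

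The main obstacle is only technical: justifying the subordination identity pointwise for the singular kernel, and the positivity of the heat kernel, on a general manifold. If I wanted to avoid the heat kernel, I would instead argue by a weak maximum principle. For smooth nonnegative $f\not\equiv0$, $w=\Kb f$ is smooth and satisfies $w-b^2\Delta w=f$. At a minimum point of $w$ we have $\Delta w\ge0$, so $\min w\ge\min f\ge0$. The strong maximum principle for $-b^2\Delta+\I$ then rules out an interior zero unless $w\equiv0$, which is impossible. A general measure $\nu$ can then be treated by mollifying it and using lower semicontinuity. However, the uniform lower bound from the heat kernel gives strictness directly for measures, so I would follow the subordination route.
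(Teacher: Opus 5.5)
Your proposal is correct and follows essentially the same route as the paper: you use the subordination formula $K_b(x,y)=\int_0^\infty e^{-t}p_{b^2t}(x,y)\,\d t$, derive a uniform positive lower bound on the kernel, and then integrate against $\nu$. The only difference is how the uniform bound is obtained. The paper restricts to $t\in[1,2]$ and uses strict positivity and continuity of the heat kernel on the compact set $[1,2]\times\M\times\M$. You instead use the long-time convergence $p_\tau\to1$, which needs only nonnegativity of $p$ and the spectral gap, and this gives the explicit constant $\tfrac12 e^{-\tau_0/b^2}$.
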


\begin{proof}[Proof of \Cref{lem:Kb_positive}]
Let \(p_s(x,y)\) denote the heat kernel on  \(\M\) with respect to
the normalized  Riemannian volume measure, that is,
\[
e^{s\Delta}f(x)=\int_{ \M}p_s(x,y)f(y)\,dy .
\]
The kernel green function can be written as:
\begin{equation}\label{eq:Kb_heat_kernel_representation}
K_b(x,y)=\int_0^\infty e^{-t}p_{b^2t}(x,y)\,dt.
\end{equation}
Indeed, for smooth \(f\), if
\[
w:=\int_0^\infty e^{-t}e^{b^2t\Delta}f\,dt,
\]
then
\[
(I-b^2\Delta)w
=
\int_0^\infty e^{-t}(I-b^2\Delta)e^{b^2t\Delta}f\,dt
=
-\int_0^\infty \frac{d}{dt}\left(e^{-t}e^{b^2t\Delta}f\right)\,dt
=f.
\]
Therefore \(w=(\I-b^2\Delta)^{-1}f=\K_bf\), which gives the kernel formula.
Consequently,
\[
K_b(x,y)=\int_0^\infty e^{-t}p_{b^2t}(x,y)\,dt
\ge
\int_1^2 e^{-t}p_{b^2t}(x,y)\,dt .
\]
The last integrand is  strictly positive and continuous on
\([1,2]\times \M\times \M\). Hence
\begin{equation}\label{eq:Kb_positive_lower_bound}
\kappa_b:=\inf_{x,y\in\M}\int_1^2 e^{-t}p_{b^2t}(x,y)\,dt>0,
\end{equation}
 because \(\M\) is closed and connected, so \(p_s(x,y)>0\) for every \(s>0\).
Therefore, for every $x\in \M$,
\[
\Kb\nu(x)
=
\int_{ \M}K_b(x,y)\,d\nu(y)
\ge
\kappa_b\,\nu( \M)
>
0\,,
\]
concluding the proof.
\end{proof}

\subsubsection{Commutator and Remainder Estimates}
\label{app:commutator_remainder_estimates}

We collect the commutator estimate and remainder bound used in the energy
argument.

\begin{lemma}\label{lem:mixed_moser}
Let $\mathcal{M}$ be a closed Riemannian manifold.  Let \(r,a,b\ge0\) be integers, and let
\(P\in\operatorname{Diff}^a(\mathcal{M})\) and \(Q\in\operatorname{Diff}^b(\mathcal{M})\) be 
smooth-coefficient differential operators with \(a+b\le r\).  Then, for all
smooth functions $F,G$ on $\mathcal{M}$,
\[
\|P(F) Q(G)\|_{L^2(\mathcal{M})}
\le
C_{M,r,P,Q}
\bigl(
\|F\|_{L^\infty(\M)}\|G\|_{H^r(\mathcal{M})}
+
\|F\|_{H^r(\M)}\|G\|_{L^\infty(\mathcal{M})}
\bigr).
\]
\end{lemma}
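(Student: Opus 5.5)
The plan is to reduce the estimate to the classical Gagliardo--Nirenberg interpolation inequality on a closed manifold, combined with Hölder's inequality. This is the standard Moser-type product estimate.

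\emph{Step 1: reduction to covariant derivatives.} Any $P\in\operatorname{Diff}^a(\M)$ with smooth coefficients can be written globally as a finite sum $P(F)=\sum_{j\le a} A_j\cdot\nabla^j F$, where $\nabla^j$ is the $j$-th covariant derivative and the $A_j$ are smooth tensor fields. We obtain this by using a finite atlas with a subordinate partition of unity and rewriting coordinate derivatives in terms of covariant ones. By compactness the $A_j$ are bounded in $L^\infty$, and the same holds for $Q$. It therefore suffices to bound
\[
\|\nabla^iF\otimes\nabla^jG\|_{L^2}, \qquad i+j\le r .
\]

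\emph{Step 2: Hölder and interpolation.} The cases $i=0$ or $j=0$ are immediate, for instance $\|F\,\nabla^jG\|_{L^2}\le\|F\|_{L^\infty}\|G\|_{H^r}$. For $i,j\ge1$, set $k=i+j\le r$. Apply Hölder with exponents $p=2k/i$ and $q=2k/j$, so that $1/p+1/q=1/2$. Then use the Gagliardo--Nirenberg inequality on the closed manifold $\M$:
\[
\|\nabla^iF\|_{L^{2k/i}}\le C\|F\|_{L^\infty}^{1-i/k}\|F\|_{H^k}^{i/k}.
\]
The manifold version follows from the Euclidean one through the same partition of unity. The lower-order terms that localization produces are absorbed using $\|F\|_{L^2}\le\|F\|_{L^\infty}$, since the volume is normalized.

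\emph{Step 3: Young's inequality.} Applying the same bound to $G$ and multiplying gives
\[
\bigl(\|F\|_{L^\infty}\|G\|_{H^k}\bigr)^{j/k}\bigl(\|F\|_{H^k}\|G\|_{L^\infty}\bigr)^{i/k}.
\]
Young's inequality with exponents $k/j$ and $k/i$ turns this into the sum stated in the lemma. Finally, $\|\cdot\|_{H^k}\le\|\cdot\|_{H^r}$ for $k\le r$.

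The main obstacle is Step 2, specifically justifying the $L^\infty$-endpoint Gagliardo--Nirenberg inequality on $\M$. I would derive it by integrating by parts in the form
\[
\int|\nabla^iF|^{p}=\int\langle\nabla^iF|\nabla^iF|^{p-2},\nabla\nabla^{i-1}F\rangle,
\]
followed by induction on $i$, as in Moser's original argument. Commuting covariant derivatives generates curvature terms; these are lower order and are controlled by the inductive hypothesis. Alternatively, one can simply cite the standard result, for example Aubin's or Hebey's treatments of Sobolev spaces on compact manifolds.
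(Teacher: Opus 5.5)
Your proof is correct, but it is organized differently from the paper's.

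The paper localizes first. On each patch of a finite atlas it writes $P$ and $Q$ in coordinates and applies the Euclidean Moser product estimate (Taylor, PDE III, Ch.~13, Prop.~3.6) to each product $\partial^\alpha F\,\partial^\beta G$. It then sums over patches, using the equivalence of localized and global $H^r$ norms.

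You stay global. You rewrite $P,Q$ through covariant derivatives and re-derive the product estimate on $\mathcal M$ itself, via H\"older with exponents $2k/i,\,2k/j$, an $L^\infty$-endpoint Gagliardo--Nirenberg inequality, and Young. This is exactly the mechanism inside the Euclidean estimate the paper cites.

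The two routes trade off as follows. The paper's is shorter, since all the interpolation sits in one citation and only cut-off bookkeeping remains. Yours makes the exponent balancing explicit and reduces everything to a single interpolation inequality on $\mathcal M$.

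On that single input, one clarification. If you derive it by localizing the Euclidean inequality, the leftover terms are not just $\|F\|_{L^2}$. They are intermediate derivatives such as $\|\partial^{i'}(\chi F)\|_{L^{2k/i}}$ with $1\le i'<i$, coming from Christoffel symbols and derivatives of the cut-off. These need one more step: $\|\partial^{i'}u\|_{L^p}\le C(\|\partial^{i}u\|_{L^p}+\|u\|_{L^p})$, combined with $\|u\|_{L^{2k/i}}\le \|u\|_{L^\infty}^{1-i/k}\|u\|_{L^2}^{i/k}$. Together these bound the lower-order terms by $C\|F\|_{L^\infty}^{1-i/k}\|F\|_{H^k}^{i/k}$.

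Alternatively, you can cite Hamilton's interpolation inequality for tensors on closed manifolds (Three-manifolds with positive Ricci curvature, 1982, \S12): $\int|\nabla^iT|^{2k/i}\le C(d,k)\max|T|^{2k/i-2}\int|\nabla^kT|^2$. Its proof integrates by parts exactly as in your sketch and never commutes covariant derivatives. So the curvature terms you anticipate never arise and need no inductive control.
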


\begin{proof}
The case $r=0$ follows from H\"older's inequality, since \(P\) and \(Q\) are
zeroth-order operators.  For $r\ge1$, choose a finite atlas and a subordinate
partition of unity.  The localized coordinate Sobolev norms are equivalent to
the \(H^r(\mathcal{M})\) norms on the closed manifold, see, for instance,
\cite[Chapter~13, Section~1]{taylor2011pde3}.  
The Euclidean product estimate
\cite[Chapter~13, Proposition~3.6]{taylor2011pde3} gives
\[
\|\partial^\alpha F\,\partial^\beta G\|_{L^2(\R^n)}
\le
C
\bigl(
\|F\|_{L^\infty(\R^n)}\|G\|_{H^r(\R^n)}
+
\|F\|_{H^r(\R^n)}\|G\|_{L^\infty(\R^n)}
\bigr).
\]
Summing over the finitely many components and coordinate patches proves the claim.
\end{proof}

\begin{lemma}[Divergence commutator on closed manifolds]\label{lem:integer_commutators}
Let $\mathcal{M}$ be a closed  Riemannian manifold and let $m\ge1$ be an integer.  For smooth
$f\in C^\infty(\mathcal{M})$ and $X\in \Gamma(T\mathcal{M})$, define
\[
\mathcal B_m[f,X]
:=
\Delta^m\divg(fX)-\divg((\Delta^m f)X).
\]
Then
\[
\begin{aligned}
\|\mathcal B_m[f,X]\|_{L^2(\mathcal{M})}
\le
C_{M,m}
\bigl(
\|X\|_{W^{1,\infty}(\mathcal{M})}\|f\|_{H^{2m}(\mathcal{M})}
+
\|X\|_{H^{2m+1}(\mathcal{M})}\|f\|_{L^\infty(\mathcal{M})}
\bigr).
\end{aligned}
\]
\end{lemma}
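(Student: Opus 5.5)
The plan is to expand $\Delta^m\divg(fX)$ by repeatedly commuting the Laplacian through the product, and to identify the one term with all $2m$ derivatives on $f$ and none on $X$. That term is exactly $\divg((\Delta^m f)X)$, and every remaining term is handled with the product estimate \Cref{lem:mixed_moser}.

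First write $\divg(fX)=\langle\nabla f,X\rangle+f\divg X$. Applying $\Delta^m$ to this smooth expression gives a finite sum of terms of the form $P(f)\,Q(X)$, where $P,Q$ are smooth-coefficient differential operators with $\operatorname{ord}P+\operatorname{ord}Q\le 2m+1$. The coefficients involve the metric and its curvature, which is harmless on a closed manifold.

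The term with the full $2m+1$ derivatives on $f$ is $\langle\nabla\Delta^m f,X\rangle$. This is cancelled by the same term in $\divg((\Delta^m f)X)=\langle\nabla\Delta^m f,X\rangle+(\Delta^m f)\divg X$. Concretely, by induction on $m$, using $\Delta(\langle\nabla g,X\rangle)=\langle\nabla\Delta g,X\rangle+2\langle\nabla^2 g,\nabla X\rangle+\Ric$-terms$+\langle\nabla g,\Delta X\rangle$, one sees that
\[
\Delta^m\langle\nabla f,X\rangle-\langle\nabla\Delta^m f,X\rangle
\]
is a sum of terms $P(f)Q(X)$ with $\operatorname{ord}P\le 2m$ and $\operatorname{ord}Q\ge1$, total order at most $2m+1$. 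The curvature commutators $[\Delta,\nabla]$ only lower the order. Similarly, $\Delta^m(f\divg X)-(\Delta^m f)\divg X$ consists of terms with $\operatorname{ord}P\le 2m-1$ and $Q$ containing at least one derivative of $X$. In every leftover term $X$ is differentiated at least once and $f$ at most $2m$ times.

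Each such term is bounded by writing $Q(X)=\tilde Q(\nabla X)$ with $\operatorname{ord}\tilde Q=\operatorname{ord}Q-1$, so that the total order on the pair $(f,\nabla X)$ is at most $2m$. \Cref{lem:mixed_moser} with $r=2m$ then gives the bound
\[
C\bigl(\|\nabla X\|_{L^\infty}\|f\|_{H^{2m}}+\|f\|_{L^\infty}\|\nabla X\|_{H^{2m}}\bigr),
\]
which is the claim since $\|\nabla X\|_{H^{2m}}\le C\|X\|_{H^{2m+1}}$. The lemma is stated for functions, but it applies to the tensor $\nabla X$ componentwise in local frames via the partition of unity.

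The main obstacle is bookkeeping rather than estimation. One must verify that, after the cancellation, no term has $2m+1$ derivatives on $f$, and none has $2m+1$ derivatives on $X$ paired with $f$ differentiated. The second possibility is excluded because the total order is at most $2m+1$ and at least one derivative sits on $f$ only in terms with $\operatorname{ord}Q\le 2m$.
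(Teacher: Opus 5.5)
Your proposal is correct and follows the paper's argument: both split $\divg(fX)=L_Xf+f\divg X$ and show that $[\Delta^m,L_X]f$ leaves only products with at most $2m$ derivatives on $f$. The paper gets this from $[\Delta^m,L_X]=\sum_{j}\Delta^j[\Delta,L_X]\Delta^{m-1-j}$, you from induction, and both then close with \Cref{lem:mixed_moser} plus Leibniz for $\Delta^m(f\divg X)-(\Delta^m f)\divg X$. One small slip: the Ricci term in your formula for $\Delta\langle\nabla g,X\rangle$ produces contributions in which $X$ is undifferentiated, so your claim $\operatorname{ord}Q\ge1$ is not literally true there; these terms carry at most $2m-1$ derivatives of $f$ and are bounded directly by $\|X\|_{L^\infty}\|f\|_{H^{2m-1}}$, just like the paper's lower-order products $X\,\partial^\gamma f$.
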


\begin{proof}
Set \(L_Xf:=df(X)\) and
\(\mathcal C_m[f,X]:=[\Delta^m,L_X]f\).   

For \(m=1\) we have, in abstract index notation,
\[
[\Delta,L_X]h
=
(\Delta X^i)\nabla_i h
+
2\nabla_iX^j\,\nabla_i\nabla_jh
+
X^j\Ric_{j\ell}\nabla^\ell h ,
\]

When $m \geq 1$, we use the algebraic identity
\[
[\Delta^m,L_X]
=
\sum_{j=0}^{m-1}\Delta^j[\Delta,L_X]\Delta^{m-1-j},
\]
and Leibniz' rule then show that, after expressing all terms in local coordinates, \(\mathcal C_m[f,X]\) is a finite sum of
smooth-coefficient products of the form
\[
(\partial^\alpha \partial_j X^k)(\partial^\beta f),\qquad |\alpha|+|\beta|\le2m,
\]
together with lower-order products \(X\,\partial^\gamma f\), \(|\gamma|\le2m-1\). Here, $\alpha$, $\beta$  and $\gamma$ are multi-indices in the local coordinates, $|\alpha|$ denotes the order of the multi-index $\alpha$, and $\partial^\alpha$ denotes differentiation with respect to the multi-index $\alpha$ in the local coordinates. The operators
appearing in this finite sum have smooth and bounded coefficients depending
only on \(M\) and \(m\), including the curvature terms.

Hence
Lemma~\ref{lem:mixed_moser}, applied to each of these terms, gives
\[
\|\mathcal C_m[f,X]\|_{L^2}
\le
C
\bigl(
\|X\|_{W^{1,\infty}}\|f\|_{H^{2m}}
+
\|X\|_{H^{2m+1}}\|f\|_{L^\infty}
\bigr).
\]

To conclude, consider the identity

\[
\mathcal B_m[f,X]
=
\mathcal C_m[f,X]+\Delta^m(f\divg X)-(\divg X)\,(\Delta^m f)
\]
Expanding \(\Delta^m(f\, \divg X)\) by Leibniz' rule and applying
Lemma~\ref{lem:mixed_moser} (in local coordinates) to each term gives
\[
\|\Delta^m(f\ \divg X)\|_{L^2}
\le
C
\bigl(
\|\divg X\|_{L^\infty}\|f\|_{H^{2m}}
+
\|\divg X\|_{H^{2m}}\|f\|_{L^\infty}
\bigr),
\]
while $\|(\divg X)\,\Delta^m f\|_{L^2}\le\|\divg X\|_{L^\infty}\|f\|_{H^{2m}}$.  Since
\[
\|\divg X\|_{L^\infty}\le C\|X\|_{W^{1,\infty}},
\qquad
\|\divg X\|_{H^{2m}}\le C\|X\|_{H^{2m+1}},
\]
the stated estimate follows.
\end{proof}

\begin{corollary}[Remainder bound]\label{cor:compact_remainder}
Let $m\in\mathbb N$, let $u$ be a smooth probability density on
$\mathcal{M}$, and set $v:=u-1$.  Assume that
\[
u\ge c_0>0,
\qquad
\|v\|_{W^{2,\infty}(\mathcal{M})}\le M_0
\]
for some constants $c_0,M_0>0$.  Define
\[
\mathcal R_m[u]
:=
\Delta^m \divg\bigl((\Delta u)\nabla \log u\bigr)
-
\divg\bigl((\Delta^{m+1}u)\nabla \log u\bigr).
\]
Then there exists $C=C(d,m,c_0,M_0)>0$ such that
\[
\|\mathcal R_m[u]\|_{L^2(\mathcal{M})}
\le
C\|v\|_{H^{2m+2}(\mathcal{M})}.
\]
\end{corollary}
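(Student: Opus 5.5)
The plan is to apply \Cref{lem:integer_commutators} with $f:=\Delta u$ and $X:=\nabla\log u$. By definition of $\mathcal B_m$ this gives exactly
\[
\mathcal R_m[u]=\mathcal B_m[\Delta u,\nabla\log u],
\]
so that
\[
\|\mathcal R_m[u]\|_{L^2}\le C\bigl(\|\nabla\log u\|_{W^{1,\infty}}\|\Delta u\|_{H^{2m}}+\|\nabla\log u\|_{H^{2m+1}}\|\Delta u\|_{L^\infty}\bigr).
\]
It then suffices to bound each of the four factors in terms of $c_0$, $M_0$ and $\|v\|_{H^{2m+2}}$. Since $\Delta 1=0$, we have $\|\Delta u\|_{H^{2m}}=\|\Delta v\|_{H^{2m}}\le C\|v\|_{H^{2m+2}}$ and $\|\Delta u\|_{L^\infty}\le\|v\|_{W^{2,\infty}}\le M_0$.

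For the $W^{1,\infty}$ factor we expand $\nabla\log u=\nabla v/u$ and $\nabla^2\log u=\nabla^2 v/u-\nabla v\otimes\nabla v/u^2$. Using $u\ge c_0$ and $\|v\|_{W^{2,\infty}}\le M_0$, this gives $\|\nabla\log u\|_{W^{1,\infty}}\le C(c_0,M_0)$. The first product is therefore bounded by $C\|v\|_{H^{2m+2}}$.

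The main step is the Moser-type composition estimate
\[
\|\nabla\log u\|_{H^{2m+1}}\le C(c_0,M_0)\,\|v\|_{H^{2m+2}},
\]
which makes the second product bounded by $CM_0\|v\|_{H^{2m+2}}$. I would prove it by writing $\nabla\log u=\nabla v\,\phi(v)$ with $\phi(s)=(1+s)^{-1}$, smooth on $[c_0-1,\infty)$; note that $u\ge c_0$ and $\|v\|_{L^\infty}\le M_0$ keep $v$ in a compact subinterval of the domain of $\phi$. Applying \Cref{lem:mixed_moser} with $F=\phi(v)$ and $G=\nabla v$ (componentwise in local coordinates) gives
\[
\|\nabla v\,\phi(v)\|_{H^{2m+1}}\le C\bigl(\|\phi(v)\|_{L^\infty}\|\nabla v\|_{H^{2m+1}}+\|\phi(v)\|_{H^{2m+1}}\|\nabla v\|_{L^\infty}\bigr).
\]
The first term is bounded by $c_0^{-1}\|v\|_{H^{2m+2}}$. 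For the second I need the classical composition (Moser) estimate: for $k\ge1$,
\[
\|\phi(v)\|_{H^k}\le C\bigl(\|\phi\|_{C^k(I)},\|v\|_{L^\infty}\bigr)\bigl(1+\|v\|_{H^k}\bigr),
\]
where $I$ is the compact range interval. The constant term is not harmful: it can be removed by instead bounding $\|\nabla^j(\phi(v))\|_{L^2}$ for $1\le j\le k$, which by the Fa\`a di Bruno formula and Gagliardo--Nirenberg interpolation is at most $C\|v\|_{H^k}$ (every term contains at least one derivative of $v$). The $L^2$ part $\|\phi(v)\|_{L^2}\le c_0^{-1}$ is simply multiplied by $\|\nabla v\|_{L^\infty}\le M_0$.

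For that low-order piece I would avoid the additive constant by not splitting in this way. Instead, set $\psi(s):=\log(1+s)$, which satisfies $\psi(0)=0$, so that $\nabla\log u=\nabla\psi(v)$ and $\|\nabla\psi(v)\|_{H^{2m+1}}\le C\|\psi(v)\|_{H^{2m+2}}$. The composition estimate for $\psi$ vanishing at $0$ then gives $\|\psi(v)\|_{H^{2m+2}}\le C(c_0,M_0)\|v\|_{H^{2m+2}}$; this is the homogeneous Moser estimate, which follows from the Fa\`a di Bruno formula combined with Gagliardo--Nirenberg interpolation using the $L^\infty$ bound, localized in charts as in the proof of \Cref{lem:mixed_moser}. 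I expect this homogeneous composition estimate to be the only nonroutine point; everything else is bookkeeping.

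Combining the two products gives $\|\mathcal R_m[u]\|_{L^2}\le C\|v\|_{H^{2m+2}}$ with $C=C(d,m,c_0,M_0)$, as claimed.
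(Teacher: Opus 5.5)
Your proof is correct and follows the paper's argument. You make the same application of \Cref{lem:integer_commutators} with $f=\Delta u$ and $X=\nabla\log u$, use the same elementary bounds on $\|X\|_{W^{1,\infty}}$, $\|f\|_{H^{2m}}$ and $\|f\|_{L^\infty}$, and take the same key step $\|\nabla\log u\|_{H^{2m+1}}\le C\|\log(1+v)\|_{H^{2m+2}}\le C\|v\|_{H^{2m+2}}$, via a composition estimate for a function vanishing at $0$. The paper does this by extending $\log(1+z)$ from $[c_0-1,M_0]$ to a smooth $G$ with bounded derivatives and $G(0)=0$, then citing Bahouri--Chemin--Danchin, Theorem~2.87.

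The intermediate $\phi(v)=(1+v)^{-1}$ route should simply be deleted. As written, it leaves the non-homogeneous term $c_0^{-1}\|\nabla v\|_{L^\infty}$, which is not controlled by $\|v\|_{H^{2m+2}}$ for general $m$ and $d$, as you yourself notice before switching to $\psi$.
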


\begin{proof}
Set
\[
f:=\Delta u=\Delta v,
\qquad
X:=\nabla\log u .
\]
Then $\mathcal R_m[u]=\mathcal B_m[f,X]$.  Applying
\Cref{lem:integer_commutators} on $\mathcal{M}$ gives
\[
\|\mathcal R_m[u]\|_{L^2}
\le
C
\bigl(
\|X\|_{W^{1,\infty}}\|f\|_{H^{2m}}
+
\|X\|_{H^{2m+1}}\|f\|_{L^\infty}
\bigr).
\]
Moreover,
\[
\|f\|_{H^{2m}}\le C\|v\|_{H^{2m+2}},
\qquad
\|f\|_{L^\infty}\le C\|v\|_{W^{2,\infty}}\le CM_0.
\]
The identities
\[
\nabla\log u=\frac{\nabla v}{u},
\qquad
\nabla^2\log u
=
\frac{\nabla^2v}{u}
-
\frac{\nabla v\otimes\nabla v}{u^2}
\]
and the hypotheses on $u$ and $v$ imply
\[
\|X\|_{W^{1,\infty}}\le C(c_0,M_0).
\]

Since the measure on $\mathcal{M}$ is normalized, the probability density
$u$ has average one, and hence $c_0\le1$.  Thus $u=1+v$ takes values in
$[c_0,1+M_0]$ and $0\in[c_0-1,M_0]$.  Choose
$G\in C^\infty(\R)$ with bounded derivatives of every order such that
$G(z)=\log(1+z)$ on $[c_0-1,M_0]$ and $G(0)=0$.  The composition estimate
\cite[Theorem~2.87]{bahouri2011fourier}, applied after localization and using
$H^r=B^r_{2,2}$ for integer $r$, gives, with constants depending only on
finitely many derivatives of this fixed cutoff \(G\), hence only on
\(c_0,M_0,m,d\),
\[
\|\log u\|_{H^{2m+2}}
=
\|G(v)\|_{H^{2m+2}}
\le
C(c_0,M_0,m,d)\|v\|_{H^{2m+2}}.
\]
Therefore
\[
\|X\|_{H^{2m+1}}
\le
C\|\log u\|_{H^{2m+2}}
\le
C(c_0,M_0,m,d)\|v\|_{H^{2m+2}}.
\]
Combining the preceding estimates proves the claim.
\end{proof}

\subsubsection{Energy and Sobolev estimates}
\label{sec:sobolev}

 We recall here the higher-order energy $\mathcal E_m$ from \eqref{eq:energy},
now as a functional of a general test function, and introduce the corresponding
dissipation $\mathcal D_m$. 
Let
\[
v_b := u_b - 1,
\]
and $m\in \mathbb{N}$ as in \eqref{eq:m}, for a smooth function $w$, define

\[
\mathcal E_m[w]
:=
\frac12
\int_{ \M}
\bigl(
|\nabla \Delta^m w|^2
+
b^2 |\Delta^{m+1} w|^2
\bigr)\, dx,
\]

\[
\mathcal D_m[w]
:=
\int_{ \M}
|\Delta^{m+1} w|^2\, dx.
\]
For the solution $v_b$, we write $\mathcal E_m(t) := \mathcal E_m[v_b(t)]$ and $\mathcal D_m(t) := \mathcal D_m[v_b(t)]$. Since $\mu_b(t)$ is a probability density and $K_b 1 = 1$, we have $\int_{ \M} u_b(t)\, dx = 1$, hence $v_b(t)$ has zero mean for every $t \ge 0$.

The first lemma collects the comparisons between $\mathcal E_m$, $\mathcal D_m$,
and Sobolev norms used throughout the bootstrap.
\begin{lemma}[Comparison for $H^{2m+1}$, $\mathcal E_m$, and $\mathcal D_m$]\label{lem:sobolev_control}
Let $w \in C^\infty( \M)$ have zero mean. Then, for every $0 < b \le 1$,
\[
\|w\|_{H^{2m+1}( \M)}^2
\le
C_{ \M,m}\mathcal E_m[w].
\]
Also,
\[
\|w\|_{H^{2m+2}( \M)}^2
\le
C'_{ \M,m}\mathcal D_m[w].
\]
If, in addition, $0 < b \le \lambda_1^{-1/2}$, then
\begin{equation}\label{eq:conditional_coercivity}
\mathcal D_m[w] \ge  \lambda_1\mathcal E_m[w].
\end{equation}
\end{lemma}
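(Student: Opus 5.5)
We work spectrally, using an orthonormal eigenbasis $\{\phi_k\}$ of $-\Delta$ on $L^2(\M)$ with eigenvalues $0=\lambda_0<\lambda_1\le\lambda_2\le\dots$. Since $w$ has zero mean, we can write $w=\sum_{k\ge1}\hat w_k\phi_k$. Integrating by parts,
\[
\mathcal E_m[w]=\frac12\sum_{k\ge1}\bigl(\lambda_k^{2m+1}+b^2\lambda_k^{2m+2}\bigr)|\hat w_k|^2,
\qquad
\mathcal D_m[w]=\sum_{k\ge1}\lambda_k^{2m+2}|\hat w_k|^2 .
\]
All three claims then reduce to elementary comparisons of these sums against Sobolev norms, expressed through the spectral characterization of $H^s(\M)$.

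\textbf{Step 1: Sobolev norms.} On a closed manifold, elliptic regularity gives the norm equivalence
\[
\|w\|_{H^s}^2\simeq\sum_k(1+\lambda_k)^s|\hat w_k|^2
\]
for integer $s\ge0$. One can prove it by iterating $\|f\|_{H^{j+2}}\le C(\|\Delta f\|_{H^j}+\|f\|_{L^2})$, or simply cite standard references such as Taylor. For zero-mean $w$ we have $\lambda_k\ge\lambda_1$ on the support of $\hat w$. Hence $(1+\lambda_k)^s\le(1+\lambda_1^{-1})^s\lambda_k^s$, and so
\[
\|w\|_{H^s}^2\le C_{\M,s}\sum_{k\ge1}\lambda_k^s|\hat w_k|^2 .
\]

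\textbf{Step 2: the first two inequalities.} Take $s=2m+1$ in Step 1 and drop the nonnegative $b^2$ term in $\mathcal E_m$. This gives $\|w\|_{H^{2m+1}}^2\le 2C\,\mathcal E_m[w]$, with a constant independent of $b\in(0,1]$. In fact the constant is uniform in all $b\ge0$, since the $b^2$ term only helps. Next take $s=2m+2$. This gives $\|w\|_{H^{2m+2}}^2\le C'\mathcal D_m[w]$, and here $b$ plays no role at all.

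\textbf{Step 3: coercivity \eqref{eq:conditional_coercivity}.} It suffices to compare coefficients mode by mode, that is, to show
\[
\lambda_k^{2m+2}\ge\frac{\lambda_1}{2}\bigl(\lambda_k^{2m+1}+b^2\lambda_k^{2m+2}\bigr)
\quad\text{for every } k\ge1 .
\]
Dividing by $\lambda_k^{2m+1}$, this becomes $\lambda_k(1-\lambda_1b^2/2)\ge\lambda_1/2$. The hypothesis $b^2\le\lambda_1^{-1}$ gives $1-\lambda_1b^2/2\ge1/2$, so the left side is at least $\lambda_k/2\ge\lambda_1/2$. Summing over $k$ yields $\mathcal D_m\ge\lambda_1\mathcal E_m$.

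The only non-elementary ingredient is the spectral characterization of $H^s$ in Step 1, which is standard elliptic theory on closed manifolds. I expect no real obstacle; the rest is bookkeeping of the factor $\tfrac12$ in $\mathcal E_m$.
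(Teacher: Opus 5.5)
Your proof is correct and is essentially the paper's argument written in the Laplacian eigenbasis. The paper proves the same spectral-gap chain $\|\Delta^k w\|_{L^2}^2\le\lambda_1^{-1}\|\nabla\Delta^k w\|_{L^2}^2\le\lambda_1^{-2}\|\Delta^{k+1}w\|_{L^2}^2$ using Poincar\'e and integration by parts, where you use the eigenbasis. It cites Xu's elliptic norm equivalence where you invoke the spectral characterization of $H^s$. It obtains coercivity from $2\mathcal E_m\le(\lambda_1^{-1}+b^2)\mathcal D_m$, which is exactly your mode-by-mode comparison summed over $k$.
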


\begin{proof}
For every $k\ge0$, the function $\Delta^k w$ has zero mean. Therefore the Poincar\'e inequality on  $\M$ gives
\begin{equation}\label{eq:poincare_delta_k}
\|\Delta^k w\|_{L^2( \M)}^2
\le
\frac{1}{ \lambda_1}\|\nabla \Delta^k w\|_{L^2( \M)}^2.
\end{equation}
By integration by parts and \eqref{eq:poincare_delta_k},
\begin{align*}
\|\nabla \Delta^k w\|_{L^2( \M)}^2
&=
-\int_{ \M}\Delta^k w\,\Delta^{k+1} w\, dx \\
&\le
\|\Delta^k w\|_{L^2( \M)}\|\Delta^{k+1}w\|_{L^2( \M)} \\
&\le
\frac{1}{\sqrt{ \lambda_1}}\|\nabla \Delta^k w\|_{L^2( \M)}
\|\Delta^{k+1}w\|_{L^2( \M)}.
\end{align*}
hence
\begin{equation}\label{eq:grad_lap_control}
\|\nabla \Delta^k w\|_{L^2( \M)}^2
\le
\frac{1}{ \lambda_1}\|\Delta^{k+1}w\|_{L^2( \M)}^2.
\end{equation}
Combining \eqref{eq:poincare_delta_k} and \eqref{eq:grad_lap_control}, we also obtain
\begin{equation}\label{eq:lap_lap_control}
\|\Delta^k w\|_{L^2( \M)}^2
\le
\frac{1}{ \lambda_1^2}\|\Delta^{k+1}w\|_{L^2( \M)}^2.
\end{equation}

By \cite[Corollary~3.1]{xu2004spectral}, applied with $r=2$ on the closed Riemannian manifold  $\M$, there exists $C_{ \M,m}>0$ such that
\[
\|w\|_{H^{2m+1}( \M)}^2
\le
C_{ \M,m}\sum_{k=0}^m
\left(
\|\Delta^k w\|_{L^2( \M)}^2
+
\|\nabla\Delta^k w\|_{L^2( \M)}^2
\right).
\]
Using \eqref{eq:poincare_delta_k}, \eqref{eq:grad_lap_control} and \eqref{eq:lap_lap_control} iteratively, every term on the right-hand side is bounded by $C_{ \M,m}\|\nabla\Delta^m w\|_{L^2( \M)}^2$. Therefore
\[
\|w\|_{H^{2m+1}( \M)}^2
\le
C_{ \M,m}\|\nabla\Delta^m w\|_{L^2( \M)}^2
\le
2C_{ \M,m}\mathcal E_m[w].
\]

The same corollary also gives a constant $C'_{ \M,m}>0$ such that
\[
\|w\|_{H^{2m+2}( \M)}^2
\le
C'_{ \M,m}\sum_{k=0}^{m+1}\|\Delta^k w\|_{L^2( \M)}^2.
\]
Using \eqref{eq:lap_lap_control} iteratively to absorb the lower-order terms into $\|\Delta^{m+1}w\|_{L^2}^2 = \mathcal D_m[w]$, we obtain
\[
\|w\|_{H^{2m+2}( \M)}^2
\le
C'_{ \M,m}\mathcal D_m[w].
\]

Taking $k=m$ in \eqref{eq:grad_lap_control} gives
\[
\|\nabla \Delta^m w\|_{L^2( \M)}^2
\le
\frac{1}{ \lambda_1}\mathcal D_m[w].
\]
Hence
\[
2\mathcal E_m[w]
\le
\left(\frac{1}{ \lambda_1}+b^2\right)\mathcal D_m[w].
\]
If $0 < b \le \lambda_1^{-1/2}$, then
\[
\frac{1}{ \lambda_1}+b^2\le \frac{2}{ \lambda_1},
\]
and therefore $\mathcal D_m[w] \ge  \lambda_1\mathcal E_m[w]$.
\end{proof}

We will also use a Gagliardo-Nirenberg interpolation on compact manifolds, stated below. The proof of this result, which follows directly from the one on $\mathbb R^d$ in \cite[(86)]{brezis2011functional} applied to each chart of the atlas on $\M$, is omitted. 
\begin{lemma}[$L^1$--$L^\infty$ interpolation]\label{lem:l1_to_linf}
Let $\M$ be a compact Riemannian manifold of dimension $d$. There exists $C_\M>0$ such that every $f\in W^{1,\infty}(\M)$ satisfies
\[
\|f\|_{L^\infty(\M)}
\le
C_\M
\|f\|_{W^{1,\infty}(\M)}^{\frac{d}{d+1}}
\|f\|_{L^1(\M)}^{\frac{1}{d+1}}.
\]
\end{lemma}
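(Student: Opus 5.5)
The plan is to reduce to a one-dimensional interpolation along a path from a point where $|f|$ is small, which is the classical argument behind the Gagliardo--Nirenberg inequality at the $L^1$/$L^\infty$ endpoint. Set $L:=\|f\|_{W^{1,\infty}(\M)}$ and $m_1:=\|f\|_{L^1(\M)}$, and let $x_0$ be a point where $|f(x_0)|=\|f\|_{L^\infty}=:A$; such a point exists since $f$ is Lipschitz and $\M$ compact. Because $f$ is $L$-Lipschitz with respect to $d_g$, we have $|f(y)|\ge A-L\,d_g(x_0,y)\ge A/2$ for all $y$ in the geodesic ball $B(x_0,r)$ with $r:=A/(2L)$. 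Integrating over this ball gives
\[
m_1\ge \frac{A}{2}\,\mathrm{vol}\bigl(B(x_0,r)\bigr).
\]

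The key geometric input is a uniform lower bound on ball volumes: there are constants $c_\M>0$ and $r_0>0$, depending only on $(\M,g)$, such that $\mathrm{vol}(B(x,r))\ge c_\M r^d$ for all $x\in\M$ and $r\le r_0$. I would obtain this from compactness, either via a finite atlas of normal coordinate charts with uniformly comparable metrics or via Bishop--Gromov with a lower Ricci bound. Since the volume is normalized, $\mathrm{vol}(B(x,r))=1$ for $r\ge\operatorname{diam}(\M)$, so the bound $\mathrm{vol}(B(x,r))\ge c\min(r,r_0)^d$ holds for every $r>0$.

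It remains to split into cases.
\begin{itemize}
\item If $r=A/(2L)\le r_0$, then $m_1\ge c\,A\,(A/L)^d$, so $A^{d+1}\le C L^d m_1$. This is exactly $A\le C L^{d/(d+1)}m_1^{1/(d+1)}$.
\item If $r>r_0$, then $A>2r_0L$ and $m_1\ge cA r_0^d$, so $A\le Cm_1$. Since $A\le L$ (the $W^{1,\infty}$ norm dominates the sup norm), we get $A=A^{d/(d+1)}A^{1/(d+1)}\le L^{d/(d+1)}(Cm_1)^{1/(d+1)}$.
\end{itemize}
Both cases give the claim. The degenerate case $L=0$ forces $f\equiv0$, which is trivial.

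The only step requiring care is the uniform small-ball volume bound, and it is standard on a closed manifold. Equivalently, one could follow the paper's suggestion: apply the Euclidean inequality from \cite[(86)]{brezis2011functional} in each chart of a finite atlas to $\chi_k f$, with a partition of unity $\chi_k$. That route produces additional lower-order terms, which are absorbed in the same way as in the second case above.
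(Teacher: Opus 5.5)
Your argument is correct, and it takes a genuinely different route from the paper. The paper omits the proof and says only that it follows by applying the Euclidean Gagliardo--Nirenberg inequality \cite[(86)]{brezis2011functional} in each chart of a finite atlas.

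\textbf{The paper's route.} It treats the lemma as the endpoint case of a general interpolation inequality: $L^\infty$ controlled by $L^1$ and $\|\nabla\cdot\|_{L^\infty}$, with exponent $a=d/(d+1)$. This is quick to state and extends verbatim to other exponents. It does, however, leave implicit the partition-of-unity bookkeeping. The localized functions $\chi_k f$ pick up the lower-order term $f\nabla\chi_k$, which is dominated only by the inhomogeneous norm $\|f\|_{W^{1,\infty}}$. The chart $L^1$ norms must also be compared with the Riemannian one.

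\textbf{Your route.} Your proof is self-contained and geometric, and it needs only two inputs.
\begin{itemize}
\item A $W^{1,\infty}$ function is $\|\nabla f\|_{L^\infty}$-Lipschitz for $d_g$. This is standard on a closed manifold.
\item The uniform noncollapsing bound $\mathrm{vol}(B(x,r))\ge c_\M r^d$ holds for $r\le r_0$. Your Bishop--Gromov justification does work on a connected closed manifold: compare $B(x,r)$ with $B(x,\operatorname{diam}\M)=\M$ via relative volume monotonicity.
\end{itemize}
The case split also shows exactly where the full $W^{1,\infty}$ norm, rather than just $\|\nabla f\|_{L^\infty}$, is needed. It enters only when the region where $|f|\ge A/2$ around the maximum is wider than $r_0$, and there you use $\|f\|_{L^\infty}\le\|f\|_{W^{1,\infty}}$. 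In that case you do not need $\mathrm{vol}(B(x,r))=1$ for $r\ge\operatorname{diam}\M$; monotonicity $\mathrm{vol}(B(x_0,r))\ge \mathrm{vol}(B(x_0,r_0))\ge c_\M r_0^d$ suffices. Your argument also gives an explicit constant in terms of $c_\M$ and $r_0$, and it would apply on any metric measure space with this lower volume-growth property.
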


\subsection{Proof of \Cref{thm:main_decay}}
\label{sec:proofintermediate}

Since $\mu_b(0)=\mu_{\mathrm{in}}$ by \Cref{assump:initial}, we have $u_b(0)=K_b\mu_{\mathrm{in}}$. In  a Laplacian eigenbasis, $K_b$ is the multiplier $\bigl(1+b^2\lambda_\ell\bigr)^{-1}$, so every derivative of $u_b(0)$ is bounded uniformly by the corresponding derivative of $\mu_{\mathrm{in}}$. Hence $E_0 < \infty$. Set
\[
c_0 := \inf_{x\in  \M}\mu_{\mathrm{in}}(x) > 0.
\]

We also claim that
\begin{equation}\label{eq:initial_positive_u}
u_b(0,x)\ge c_0
\qquad\text{for all }x\in  \M.
\end{equation}
Indeed,
\[
u_b(0)=K_b\mu_{\mathrm{in}}=K_b(\mu_{\mathrm{in}}-c_0)+c_0 K_b1.
\]
Since $\mu_{\mathrm{in}}-c_0\ge0$, \Cref{lem:Kb_positive} gives
$K_b(\mu_{\mathrm{in}}-c_0)\ge0$, while $K_b1=1$. Hence
\[
u_b(0)\ge c_0,
\]
which is \eqref{eq:initial_positive_u}.

\medskip
\noindent\emph{Step 1: Bootstrap assumption.} We work on the maximal interval on which both the energy and a rough positive lower bound remain controlled.

Let $T_\ast$ be the supremum of all $T>0$ such that
\begin{equation}\label{eq:bootstrap_interval}
\mathcal E_m(t)\le 2E_0,
\qquad
u_b(t,x)\ge \frac{c_0}{2}
\qquad\text{for all }(t,x)\in [0,T]\times \M.
\end{equation}
Since $\mathcal E_m(0)\le E_0$ and \eqref{eq:initial_positive_u} holds,
continuity gives $T_\ast>0$.

We work on $[0,T_\ast)$. By \Cref{lem:sobolev_control} and \eqref{eq:bootstrap_interval},
\[
\|u_b(t)-1\|_{H^{2m+1}( \M)}
=
\|v_b(t)\|_{H^{2m+1}( \M)}
\le
C E_0^{1/2}.
\]
Since $2m+1>  d/2+2$, the compact-manifold Sobolev embedding in
\cite[Theorem~6.3]{hebey_robert_2008}, applied with $p=2$, differentiability
order $k=2m+1$, and target order $2$, yields
\begin{equation}\label{eq:bootstrap_w2infty}
\|v_b(t)\|_{W^{2,\infty}( \M)} \le C E_0^{1/2}
\qquad\text{for all } t\in[0,T_\ast).
\end{equation}

\medskip
\noindent\emph{Step 2: Energy dissipation.} Under the bootstrap lower bound, we differentiate the energy and estimate separately the principal term and the commutator remainder.

Since $v_b = u_b - 1$, the equation
\[
\partial_t \mu_b = \operatorname{div}(\mu_b \nabla \log u_b),
\qquad
\mu_b = u_b - b^2 \Delta u_b,
\]
rewrites as
\begin{equation}\label{eq:vb_equation}
\partial_t v_b - b^2 \Delta \partial_t v_b
=
\Delta v_b
-
b^2 \operatorname{div}\bigl((\Delta v_b)\nabla \log u_b\bigr).
\end{equation}

Apply $\Delta^m$ to \eqref{eq:vb_equation} and test against $-\Delta^{m+1}v_b$. Since $\Delta$ is self-adjoint on  $\M$, we obtain
\[
\frac{d}{dt}\mathcal E_m(t) + \mathcal D_m(t)
=
b^2
\int_{ \M}
\Delta^m \operatorname{div}\bigl((\Delta v_b)\nabla \log u_b\bigr)
\Delta^{m+1} v_b\, dx.
\]
We split the right-hand side into a principal part and a remainder:
\[
\frac{d}{dt}\mathcal E_m(t) + \mathcal D_m(t)
=
I_{\mathrm{pr}}(t) + I_{\mathrm{rem}}(t),
\]
with
\[
I_{\mathrm{pr}}
:=
b^2
\int_{ \M}
\operatorname{div}\bigl((\Delta^{m+1} v_b)\nabla \log u_b\bigr)
\Delta^{m+1} v_b\, dx
\]
and
\[
I_{\mathrm{rem}}
:=
b^2
\int_{ \M}
\mathcal R_m[u_b]\,\Delta^{m+1} v_b\, dx,
\]
where $\mathcal R_m[u_b]$ is the commutator analyzed in \Cref{cor:compact_remainder}:
\[
\mathcal R_m[u]
:=
\Delta^m \divg\bigl((\Delta u)\nabla \log u\bigr)
-
\divg\bigl((\Delta^{m+1}u)\nabla \log u\bigr).
\]

For the principal part, an integration by parts gives 
\[
I_{\mathrm{pr}}
=
\frac{b^2}{2}
\int_{ \M}
(\Delta^{m+1} v_b)^2 \Delta \log u_b\, dx.
\]
Since $\mu_b = u_b - b^2 \Delta u_b > 0$ by \Cref{thm:pde_global_smooth_existence},
\[
b^2 \Delta u_b < u_b.
\]
Moreover,
\[
\Delta \log u_b
=
\frac{\Delta u_b}{u_b} - |\nabla \log u_b|^2
\le
\frac{\Delta u_b}{u_b}.
\]
Therefore
\[
b^2 \Delta \log u_b \le b^2\frac{\Delta u_b}{u_b} \le 1
\qquad
\text{pointwise on } [0,\infty)\times  \M,
\]
and hence
\[
I_{\mathrm{pr}}(t) \le \frac12 \mathcal D_m(t).
\]

To estimate the remainder, we apply \Cref{cor:compact_remainder} with
$u=u_b(t)$. Since $u_b(t)$ is a probability density, $v_b(t)=u_b(t)-1$ has zero
mean, so \Cref{lem:sobolev_control} applies. Together with
\eqref{eq:bootstrap_w2infty} and the bootstrap lower bound $u_b\ge c_0/2$, we obtain
\[
\|\mathcal R_m[u_b(t)]\|_{L^2( \M)}
\le
C( \M,c_0,E_0)\mathcal D_m(t)^{1/2},
\]
and therefore
\[
|I_{\mathrm{rem}}(t)|
\le
b^2 \|\mathcal R_m[u_b(t)]\|_{L^2}\|\Delta^{m+1}v_b(t)\|_{L^2}
\le
C( \M,c_0,E_0) b^2 \mathcal D_m(t).
\]
Choose $b_2>0$ so that
\[
C(c_0,E_0) b_2^2 \le \frac14.
\]
Let
\[
\bar b_0
:=
\min\left\{
1,\,
\frac{1}{\sqrt{ \lambda_1}},\,
b_2
\right\}.
\]
Then, for every $0 < b \le \bar b_0$,
\[
\frac{d}{dt}\mathcal E_m(t) + \frac14 \mathcal D_m(t) \le 0.
\]
Applying \Cref{lem:sobolev_control} to $v_b(t)$, we obtain
\[
\mathcal D_m(t) \ge  \lambda_1\mathcal E_m(t),
\]
and therefore
\[
\frac{d}{dt}\mathcal E_m(t)
\le
-\frac{ \lambda_1}{4}\mathcal E_m(t).
\]
Gronwall's lemma gives
\[
\mathcal E_m(t) \le E_0 e^{-\frac{ \lambda_1}{4}t}.
\]

\medskip
\noindent\emph{Step 3: Consequences under the bootstrap.} The energy decay yields exponential Sobolev and logarithmic bounds on $[0,T_\ast)$.

By \Cref{lem:sobolev_control},
\[
\|u_b(t)-1\|_{H^{2m+1}( \M)}
=
\|v_b(t)\|_{H^{2m+1}( \M)}
\le
C \mathcal E_m(t)^{1/2}
\le
C e^{-\frac{ \lambda_1}{8}t}.
\]
By the definition of $m$, we have $2m+1 >  d/2 + 2$. Applying
\cite[Theorem~6.3]{hebey_robert_2008} again with $p=2$,
differentiability order $k=2m+1$, and target order $2$, we obtain
$H^{2m+1}( \M) \hookrightarrow W^{2,\infty}( \M)$, and so
\[
\|u_b(t)-1\|_{W^{2,\infty}( \M)}
\le
C e^{-\frac{ \lambda_1}{8}t}.
\]
Using the bootstrap lower bound $u_b\ge c_0/2$, we estimate
\[
\nabla \log u_b = \frac{\nabla u_b}{u_b},
\qquad
\nabla^2 \log u_b
=
\frac{\nabla^2 u_b}{u_b}
-
\frac{\nabla u_b \otimes \nabla u_b}{u_b^2}.
\]
The linear terms are bounded by $C_{c_0}\|u_b-1\|_{W^{2,\infty}}$, while the quadratic term decays faster, namely like $\|u_b-1\|_{W^{1,\infty}}^2$. Consequently,
\[
\|\nabla \log u_b(t)\|_{W^{1,\infty}( \M)}
\le
C e^{-\frac{ \lambda_1}{8}t}.
\]

\medskip
\noindent\emph{Step 4: Lower bound on $u_b$.} We now compare $u_b$ with the heat flow and use the direct $b^2$ estimate to improve the bootstrap lower bound.

Let $\mu$ be the heat flow starting from $\mu_{\mathrm{in}}$. By \Cref{lem:heat_flow_bounds},
\begin{equation}\label{eq:heat_lower_bound}
\mu(t,x)\ge c_0
\qquad
\text{for all } (t,x)\in [0,\infty)\times  \M.
\end{equation}
Applying \Cref{prop:w1_b2} on $[0,T_\ast)$ with $c_\ast=c_0/2$ and the
$H^{2m+1}$ decay just proved, we obtain
\begin{equation}\label{eq:l1_u_mu}
\|u_b(t)-\mu(t)\|_{L^1( \M)}
\le
\|\mu_b(t)-\mu(t)\|_{L^1( \M)}
+
b^2\|\Delta u_b(t)\|_{L^1( \M)}
\le
Cb^2.
\end{equation}
for all $t\in[0,T_\ast)$.
Moreover, \eqref{eq:bootstrap_w2infty} and \Cref{lem:heat_flow_bounds} give
\[
\|u_b(t)-\mu(t)\|_{W^{1,\infty}( \M)}
\le
\|v_b(t)\|_{W^{2,\infty}( \M)}
+
\|\mu(t)-1\|_{W^{1,\infty}( \M)}
\le
CE_0^{1/2}+C( \M,\mu_{\mathrm{in}}).
\]
Together with \Cref{lem:l1_to_linf}, this yields
\begin{equation}\label{eq:linfty_u_mu}
\|u_b(t)-\mu(t)\|_{L^\infty( \M)}
\le
C b^{\frac{2}{ d+1}}
\qquad
\text{for all } t\in[0,T_\ast).
\end{equation}
Choose $b_3>0$ so small that
\[
C b_3^{\frac{2}{ d+1}} \le \frac{c_0}{4}.
\]
Then, for every $0<b\le b_3$, \eqref{eq:heat_lower_bound} and \eqref{eq:linfty_u_mu} imply
\[
u_b(t,x)\ge \frac{3}{4}c_0
\qquad
\text{for all } (t,x)\in[0,T_\ast)\times  \M.
\]

\medskip
\noindent\emph{Step 5: Closing the bootstrap.} Set
\[
b_0
:=
\min\left\{
\bar b_0,\,
b_3
\right\}.
\]
For every $0<b\le b_0$, we have
\[
\mathcal E_m(t)\le E_0 e^{-\frac{ \lambda_1}{4}t}<2E_0
\qquad\text{for all }t\in[0,T_\ast),
\]
and
\[
u_b(t,x)\ge \frac{3}{4}c_0>\frac{c_0}{2}
\qquad\text{for all }(t,x)\in[0,T_\ast)\times \M.
\]
By continuity, this extends \eqref{eq:bootstrap_interval} beyond $T_\ast$
unless $T_\ast=\infty$. Hence $T_\ast=\infty$, and the estimates proved above on
$[0,T_\ast)$ hold for all $t\ge0$. This proves the claim.\qed

\begin{remark}
    Note that the left-hand side in \eqref{eq:vb_equation} has the same Voigt-type regularization of the time derivative as in the Euler--Voigt and Navier--Stokes--Voigt equations \cite{larios2009higher}.
\end{remark}

\subsection{Sharpness of $b^2$ rate}\label{s:lowerbound}

The rate in \Cref{thm:main_convergence} is optimal in general. Indeed, take
\(\M=\mathbb S^1\simeq\R/2\pi\mathbb Z\) and
\[
\mu_{\mathrm{in}}(\theta)=1+\varepsilon\cos\theta,
\qquad 0<\varepsilon<\frac12 .
\]
Let \(\mu_b\) solve \eqref{eq:pde} and let \(\mu\) solve \eqref{eq:heat} with
the same initial datum. Then there exist \(t_0>0\), \(b_1>0\), and \(c>0\) such
that, for every \(0<b\le b_1\),
\[
\|\mu_b(t_0)-\mu(t_0)\|_{L^1(\mathbb S^1)}\ge c b^2 .
\]

To see this, set \(\nu_b:=\mu_b-\mu\). From the comparison equation \eqref{eq:comparison_error_eq} on $\mathbb{S}^1$ we have
\[
\partial_t\nu_b-\partial_{\theta\theta}\nu_b
=
b^2\partial_\theta F_b,
\qquad
F_b:=\partial_{\theta\theta\theta}u_b
-
(\partial_{\theta\theta}u_b)\partial_\theta\log u_b .
\]
Since \(\nu_b(0)=0\),
\[
\partial_t\nu_b(0)=b^2\partial_\theta F_b(0).
\]
Moreover,
\[
u_b(0,\theta)=K_b\mu_{\mathrm{in}}
=
1+a_b\cos\theta,
\qquad
a_b:=\frac{\varepsilon}{1+b^2}.
\]
Therefore
\[
F_b(0,\theta)
=
\frac{a_b\sin\theta}{1+a_b\cos\theta},
\]
and hence
\[
G_b(\theta):=\partial_\theta F_b(0,\theta)
=
\frac{a_b(\cos\theta+a_b)}{(1+a_b\cos\theta)^2}.
\]
As \(b\downarrow0\), \(G_b\to G_0\) smoothly, where
\[
G_0(\theta)
=
\frac{\varepsilon(\cos\theta+\varepsilon)}
{(1+\varepsilon\cos\theta)^2}.
\]
In particular \(G_0\not\equiv0\).

Now test against \(G_0\). Define
\[
J_b(t):=\int_{\mathbb S^1}\nu_b(t,\theta)G_0(\theta)\,d\theta .
\]
Then
\[
J_b'(0)
=
b^2\int_{\mathbb S^1}G_bG_0\,d\theta
\ge
\frac12 b^2\|G_0\|_{L^2}^2
\]
for all sufficiently small \(b\). By Taylor expansion at \(t=0\), 
with the derivative bounds from \Cref{thm:main_decay} ensuring that the remainder is uniform for sufficiently small $b$,
\[
J_b(t)=tJ_b'(0)+O(t^2b^2).
\]
Choosing \(t_0>0\) small but fixed gives
\[
|J_b(t_0)|\ge c b^2 .
\]
Finally,
\[
\|\mu_b(t_0)-\mu(t_0)\|_{L^1}
=
\|\nu_b(t_0)\|_{L^1}
\ge
\frac{|J_b(t_0)|}{\|G_0\|_{L^\infty}}
\ge
c b^2 .
\]
Thus the \(b^2\) rate cannot be improved in general.

\section{Proof of the one-dimensional mean-field limit}\label{sec:s1}

We now compare the regularized continuum flow with its deterministic particle
approximation. We work in the case  $d=1$ where the kernel has only a cusp
singularity. In this setting the particle dynamics and the
modulated energy estimate can be treated without truncating the kernel.   Every one-dimensional closed connected Riemannian manifold is isometric to a circle. After normalizing the total length, we may without loss identify $\M$ with
$\Torus:=\R/2\pi\mathbb Z$, denote by $d\theta$ the normalized Lebesgue
measure on $\Torus$, and write
\[
\Delta=\partial_{\theta\theta},
\qquad
f'=\partial_\theta f.
\]

For an initial configuration $X^0=(x_1^0,\dots,x_N^0)\in\Torus^N$ with
distinct entries, set
\[
\mu^N_b(t)=\frac1N\sum_{i=1}^N\delta_{x_i(t)},
\qquad
u^N_b(t)=K_b*\mu^N_b(t),
\qquad
\Kb=(\I-b^2\partial_{\theta\theta})^{-1},
\]
where $*$ denotes the circular convolution ($K_b*\mu^N_b(\theta):=\int_{\Torus}K_b(\theta-\varphi)\,\mathrm d\mu_b^N(\varphi)$).
We consider the particle system
\begin{equation}\label{eq:s1_particle_system}
\dot x_i(t)=-\frac{(u^N_b)'(t,x_i(t))}{u^N_b(t,x_i(t))},
\qquad
x_i(0)=x_i^0,
\qquad
i=1,\dots,N,
\end{equation}
where the derivative at a particle is evaluated by suppressing its
self-interaction:
\[
(u^N_b)'(t,x_i(t))
=
\frac1N\sum_{j\ne i}K_b'\bigl(x_i(t)-x_j(t)\bigr).
\]
This is the natural value at the cusp of the even kernel $K_b$: 
it is the arithmetic mean of the two one-sided limits of $(u^N_b)'$ at the
particle, and corresponds to the convention $K_b'(0)=0$. The noncollision result \Cref{lem:s1_noncollision} below keeps trajectories in the collision-free configuration space, and
\Cref{prop:s1_particle_wellposed} gives global well-posedness for distinct
initial positions. In the mean-field theorem, these initial positions are
sampled independently with law $\mu_{\mathrm{in}}(\theta)\,d\theta$, so this assumption holds almost surely.

\subsection{Well-posedness of the particle dynamics}

To prove the well-posedness of the particles dynamics, we first show that particles cannot collide in finite time.

\begin{lemma}[Noncollision on $\Sphere^1$]\label{lem:s1_noncollision}
Let $0<b\le1$, let $T\in(0,\infty]$, and let
$x_1,\dots,x_N\in C^1([0,T);\Torus)$ solve the particle system
\eqref{eq:s1_particle_system}. If
\[
x_i(0)\ne x_j(0)\qquad\text{in }\Torus,\quad i\ne j,
\]
then the particles remain distinct. More precisely, if $N\ge2$ and
\[
\delta(t):=\min_{i\ne j}d_{\Torus}(x_i(t),x_j(t)),
\]
then there exists $\eta_{b,N}>0$, depending only on $b$ and $N$, such that
\[
\delta(t)\ge \min\{\delta(0),\eta_{b,N}\}
\qquad\text{for every }0\le t<T.
\]
\end{lemma}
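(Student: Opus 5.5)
The approach is to show that the minimal gap $\delta(t)$ cannot decrease while it is below a threshold $\eta_{b,N}$. The reason is that the cusp of $K_b$ produces an $O(b^{-2}N^{-1})$ repulsion between nearest neighbours. Every other contribution to their relative velocity is $O(C_{b,N}\,\delta)$.

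As long as the particles stay distinct, their cyclic order on $\Torus$ is preserved. So $\delta(t)$ is the minimum over the $N$ consecutive gaps $g_i=x_{i+1}-x_i\in(0,2\pi)$, and it is continuous. I will show that $\frac{d}{dt}g_i>0$ whenever $g_i=\delta(t)<\eta_{b,N}$. A standard upper-Dini-derivative argument for a minimum of finitely many $C^1$ functions then gives $\delta(t)\ge\min\{\delta(0),\eta_{b,N}\}$ on $[0,T)$.

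The key structural step is to decompose $K_b'$ using \eqref{eq:explicit_circle_resolvent_kernel}. Let $s(\theta)$ be the $2\pi$-periodic sawtooth equal to $(\pi-\theta)/\pi$ on $(0,2\pi)$; it has a single jump of size $2$ at $0$. A one-sided computation gives $K_b'(0^\pm)=\mp\pi/b^2$, so
\[
K_b'=-\frac{\pi}{b^2}\,s+L_b ,
\]
with $L_b$ continuous and Lipschitz. Indeed, away from $0$ we have $L_b'=K_b''-\frac{1}{b^2}=\frac{K_b-1}{b^2}$, which is bounded. Because $K_b$ is continuous at the antipode, there is no kink there. This decomposition, together with the convention $K_b'(0)=0$ (which matches $s(0):=0$), is the point where the cusp is used.

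For an adjacent pair with gap $g$, with no particle between $x_i$ and $x_{i+1}$, I compute $\sum_j s(x_{i+1}-x_j)-\sum_j s(x_i-x_j)$ under the self-interaction convention:
\begin{itemize}
\item each of the $N-2$ other particles contributes $-g/\pi$;
\item particle $i$ seen from $x_{i+1}$ contributes $s(g)=1-g/\pi$;
\item particle $i+1$ seen from $x_i$ contributes $-s(-g)=1-g/\pi$.
\end{itemize}
The total is $2-Ng/\pi$. Together with the Lipschitz bound on $L_b$, this yields
\[
(u^N_b)'(x_{i+1})-(u^N_b)'(x_i)\le-\frac{2\pi}{Nb^2}+C_b\,g .
\]

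For the denominators, $u^N_b$ at a particle equals $\frac1N\sum_j K_b(x_i-x_j)$, including the self term $K_b(0)$. It therefore lies in $[\min K_b,\max K_b]$, which is a compact subset of $(0,\infty)$ for fixed $b$. Since the self terms cancel in the difference, $|u^N_b(x_{i+1})-u^N_b(x_i)|\le \|K_b'\|_\infty g$. Writing
\[
\dot g_i=-\frac{(u^N_b)'(x_{i+1})-(u^N_b)'(x_i)}{u^N_b(x_{i+1})}+(u^N_b)'(x_i)\Bigl(\frac{1}{u^N_b(x_i)}-\frac{1}{u^N_b(x_{i+1})}\Bigr),
\]
I obtain $\dot g_i\ge \frac{2\pi}{Nb^2\max K_b}-C_{b,N}\,g_i$. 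Taking $\eta_{b,N}$ so that the right-hand side is positive for $g_i<\eta_{b,N}$ finishes the argument.

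The main obstacle is exactly the cancellation just described. A naive bound puts the repulsion from one neighbour of size $\pi/(Nb^2)$ against an opposing push from the other neighbour, or from the far particles collectively, of the same order. The sawtooth decomposition is what shows these opposing contributions cancel to $O(g)$ in the relative velocity. I would therefore check carefully the sign conventions of $s$ at $0$ and at the self-interaction terms, and the claim that $L_b$ has no singularity at $\theta=\pi$.
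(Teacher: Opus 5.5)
Your proposal is correct and follows essentially the same route as the paper: both track the minimal (necessarily adjacent) gap, extract the repulsion from the one-sided limits $K_b'(0^\pm)=\mp\pi/b^2$ of the mutual terms, show that the non-mutual numerator terms and the two denominators agree as the gap shrinks, and close with a barrier argument on $\delta(t)$. The only difference is in execution. The paper gets the lower bound on $\dot g_i$ from the limiting value $2\pi/(b^2D_\ast)$ as the gap tends to zero, which is implicitly a compactness argument over configurations. Your sawtooth decomposition $K_b'=-\frac{\pi}{b^2}s+L_b$, with $L_b$ Lipschitz, makes the cancellation explicit and yields $\eta_{b,N}$ and the derivative bound quantitatively.
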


\begin{proof}
The case $N=1$ is immediate. Assume $N\ge2$. On any collision-free time interval
choose cyclic lifts $y_1<\dots<y_N<y_1+2\pi$, with $y_{N+1}=y_1+2\pi$
and write
\[
\delta(t)=\min_{1\le i\le N}(y_{i+1}-y_i).
\]
If $y_{i+1}-y_i=\delta(t)$, then the particle ODE gives
\[
\frac{\mathrm d}{\mathrm dt}(y_{i+1}-y_i)
=
-\frac{\sum_{j\ne i+1} K_b'(y_{i+1}-y_j)}
        {\sum_{j=1}^N K_b(y_{i+1}-y_j)}
+
\frac{\sum_{j\ne i} K_b'(y_i-y_j)}
        {\sum_{j=1}^N K_b(y_i-y_j)}.
\]
Moreover,
\[
-\lim_{\theta\downarrow0}K_b'(\theta)
=
\lim_{\theta\uparrow2\pi}K_b'(\theta)
=
\frac{\pi}{b^2}>0.
\]
Hence there are $\eta_{b,N},c_{b,N}>0$ such that
\[
y_{i+1}-y_i=\delta(t)\le\eta_{b,N}
\quad\Longrightarrow\quad
\frac{\mathrm d}{\mathrm dt}(y_{i+1}-y_i)\ge c_{b,N}.
\]
Indeed, as $y_{i+1}-y_i=\delta(t)\to0$, after making a cyclic cut that does
not split the limiting cluster, the two denominators converge to the same
positive limit $D_\ast$, while the non-mutual terms in the two numerators
converge to the same limit $A$. Therefore,
\[
\frac{\mathrm d}{\mathrm dt}(y_{i+1}-y_i)
\longrightarrow
-\frac{-\pi/b^2+A}{D_\ast}
+\frac{\pi/b^2+A}{D_\ast}
=
\frac{2\pi}{b^2D_\ast}>0.
\]

Set $m:=\min\{\delta(0),\eta_{b,N}\}$. Whenever $\delta(t)\le\eta_{b,N}$, every gap realizing the minimum has
derivative at least $c_{b,N}$. Hence the right derivative of $\delta$ is at
least $c_{b,N}$, and $\delta$ cannot cross downward through $m$. It follows
that
\[
\delta(t)\ge m
\qquad\text{for every }0\le t<T,
\]
which proves the claim.
\end{proof}

\begin{remark}
Since the initial law $\mu_{\mathrm{in}}(\theta)\,d\theta$ is nonatomic, the i.i.d.\
initial particles in \Cref{thm:main_s1} are distinct almost surely.
Thus \Cref{lem:s1_noncollision} rules out later overlaps almost surely.
\end{remark}

With noncollision in hand, global well-posedness follows by the classical ODE continuation theorem:

\begin{corollary}[Particle well-posedness on $\Sphere^1$]\label{prop:s1_particle_wellposed}
Let $0<b\le1$ and assume that $x_i^0\ne x_j^0$ in $\Torus$ for $i\ne j$.
Then the particle system \eqref{eq:s1_particle_system} has a unique global
$C^1$ solution. Along this solution the particles remain distinct,
$u^N>0$, the maps $t\mapsto u^N(t,\cdot)$ and $t\mapsto w(t,\cdot)$ are
continuous on compact time intervals, $t\mapsto E^N_b(t)$ is of class $C^1$,
and \eqref{eq:s1_continuity_mu_n} holds in the sense of distributions.
\end{corollary}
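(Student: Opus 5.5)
The plan is to treat \eqref{eq:s1_particle_system} as an autonomous ODE on the open collision-free configuration space
\[
\mathcal O_N:=\{X=(x_1,\dots,x_N)\in\Torus^N:\ x_i\ne x_j\ \text{for } i\ne j\},
\]
get local well-posedness there, and then use \Cref{lem:s1_noncollision} to keep trajectories inside a compact subset of $\mathcal O_N$. The ODE continuation theorem then gives global existence.

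\emph{Local well-posedness.} By \eqref{eq:explicit_circle_resolvent_kernel}, $K_b$ is smooth on $\Torus\setminus\{0\}$ and strictly positive: it is a $\cosh$ times a positive constant. On $\mathcal O_N$ the numerator $\frac1N\sum_{j\ne i}K_b'(x_i-x_j)$ therefore only evaluates $K_b'$ away from the cusp, so it is smooth. The denominator $u^N_b(x_i)=\frac1N\sum_j K_b(x_i-x_j)$ is smooth and bounded below by $\min K_b>0$; this also gives $u^N>0$. Hence the vector field is smooth on $\mathcal O_N$. It is locally Lipschitz, and even globally Lipschitz and bounded on each compact set
\[
\mathcal O_N^\eta:=\{\min_{i\ne j}d_\Torus(x_i,x_j)\ge\eta\}.
\]
Picard--Lindel\"of gives a unique maximal $C^1$ solution on some $[0,T)$.

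\emph{Globality.} Suppose $T<\infty$. By \Cref{lem:s1_noncollision}, the solution stays in $\mathcal O_N^{\eta}$ with $\eta=\min\{\delta(0),\eta_{b,N}\}>0$. This set is compact, and the field is bounded on it. So $x_i(t)$ is uniformly Lipschitz in $t$ and has a limit as $t\uparrow T$, and that limit lies in $\mathcal O_N^\eta\subset\mathcal O_N$. Restarting the ODE from this limit point contradicts maximality, so $T=\infty$. Uniqueness holds because any two solutions stay in $\mathcal O_N$ and the field is locally Lipschitz there. The same lemma gives distinctness for all times.

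\emph{Remaining regularity claims.} For fixed $\theta$, the map $t\mapsto u^N(t,\theta)=\frac1N\sum_jK_b(\theta-x_j(t))$ is continuous, uniformly in $\theta$, because $K_b$ is Lipschitz and the $x_j$ are $C^1$. This gives continuity of $t\mapsto u^N(t,\cdot)$ in $C(\Torus)$. The objects $w$, $E^N_b$ and \eqref{eq:s1_continuity_mu_n} are introduced only later in the paper. I expect $w$ to be a combination such as $u^N_b-u_b$, and $E^N_b$ to be the resolvent quadratic form \eqref{eq:interaction_energy} of $\mu^N_b-\mu_b$. Continuity of $w$ then follows from the continuity of $u^N$ and of the smooth solution $u_b$ (\Cref{thm:pde_global_smooth_existence}).

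For the $C^1$ property of $E^N_b$, I would expand the double integral. The particle--particle terms are $K_b(x_i-x_j)$ with $i\ne j$, which are $C^1$ in $t$ because the particles never meet. The diagonal terms $K_b(0)$ are constant. The cross terms $\int K_b(x_i(t)-y)\mu_b(t,y)\,dy=u_b(t,x_i(t))$ are $C^1$ because $u_b$ is smooth.

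The distributional continuity equation is a direct computation. Take $\varphi\in C_c^\infty((0,\infty)\times\Torus)$ and differentiate $\frac1N\sum_i\varphi(t,x_i(t))$ using the ODE. This identifies the velocity at each particle with $-(u^N_b)'/u^N_b$ under the self-interaction convention.

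The main obstacle is not the ODE theory but checking that the self-interaction convention is consistent in these identities. At each particle the field is the average of the two one-sided derivatives, and the energy computation must match this choice. Here, as in the definition of the velocity, I would rely on the evenness of $K_b$: the one-sided derivatives $K_b'(0^\pm)$ are opposite, so they cancel.
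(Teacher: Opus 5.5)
Your proposal is correct and follows essentially the same route as the paper, which simply combines \Cref{lem:s1_noncollision} with the classical ODE continuation theorem. You make that argument explicit: the field is smooth on the open collision-free configuration space, its denominator is bounded below by $\min K_b>0$, and trajectories stay in the compact set $\{\min_{i\ne j}d_{\Torus}(x_i,x_j)\ge\eta\}$; your direct checks of the remaining claims then fill in what the paper leaves implicit (the term $\int u_b\,d\mu_b$ of $E^N_b$, which you did not list, is trivially smooth in $t$, and the case $N=1$ is trivial because the field vanishes).
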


\subsection{Modulated Energy estimate}

We compare the dynamics of $\mu^N_b$ with 
the continuum solution $\mu_b$ of the regularized PDE \eqref{eq:pde} from the previous
sections, started from the same density $\mu_{\mathrm{in}}$. We write
\begin{equation}
\label{eq:s1_continuity_mu_b}
    u_b:=K_b*\mu_b,
\qquad
\partial_t\mu_b=\partial_\theta\!\bigl(\mu_b(\log u_b)'\bigr),
\end{equation}
For the global particle solution constructed above, the empirical measure
satisfies
\begin{equation}\label{eq:s1_continuity_mu_n}
\partial_t\mu^N_b=\partial_\theta\!\bigl(\mu^N_b(\log u^N_b)'\bigr),
\end{equation}
in the sense of distributions, and
\begin{equation}
\label{eq:s1_resolvent_identities}
\mu^N_b=u^N_b-b^2(u^N_b)'',\qquad \qquad
\mu_b=u_b-b^2u_b''.
\end{equation}
For the comparison, set
\begin{equ}\label{eq:w}
w:=u^N_b-u_b = K_b*(\mu^N_b-\mu_b).
\end{equ}
 Then, the natural modulated energy is the associated
kernel quadratic form
\begin{equation}\label{eq:s1_modulated_energy}
E^N_b(t)
:=
\langle \mu^N_b-\mu_b,w\rangle
=
\|w(t)\|_{L^2(\Torus)}^2+b^2\|w'(t)\|_{L^2(\Torus)}^2.
\end{equation}
This quantity is finite for empirical measures and will later be converted
into a $W_1$ bound by duality. Throughout this section, $W_1$ denotes the
$1$-Wasserstein distance on $\Torus$ associated with the geodesic distance.
  We now compare the particle and continuum evolutions through a weak--strong modulated energy estimate: the possible exponential growth is bounded only by the continuum solution, whose coefficients are controlled by the decay results of \Cref{sec:kernel_and_energy}.

\begin{proposition}[Modulated energy estimate]
\label{prop:s1_energy}
Under \Cref{assump:initial} and $\M=\mathbb S^1$, suppose that the
initial particle positions are distinct, that is,
\[
x_i(0)\neq x_j(0)
\qquad\text{for all }i\neq j,
\]
and let $\mu_b^N$ be the global particle solution given by
\Cref{prop:s1_particle_wellposed}.
Then there exist $b_0\in(0,1]$ and $C>0$, depending only on
$\mu_{\mathrm{in}}$, such that, for every $0<b\le b_0$, every $N\ge1$, and
every $t\ge0$,
\begin{equation}
\label{eq:s1_energy_gronwall}
E_b^N(t)\le C E_b^N(0).
\end{equation}
The constant $C$ is uniform in $b$, $N$, $t$, and the initial particle configuration.
\end{proposition}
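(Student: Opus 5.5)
The plan is to run a weak--strong Gronwall argument on $E^N_b$, aiming for a differential inequality
\[
\frac{d}{dt}E^N_b(t)\le C\,g(t)\,E^N_b(t),\qquad g(t):=\|(\log u_b)'(t)\|_{W^{1,\infty}}\bigl(1+\|(\log u_b)'(t)\|_{W^{1,\infty}}\bigr).
\]
By \Cref{thm:main_decay} (applied with $d=1$, $b\le b_0$), $g(t)\le Ce^{-\lambda_1 t/8}$, so $\int_0^\infty g<\infty$ uniformly in $b$. Gronwall then gives $E^N_b(t)\le e^{C\int_0^\infty g}\,E^N_b(0)$, which is \eqref{eq:s1_energy_gronwall} with a constant independent of $N$, $t$, $b$ and the configuration.

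\textbf{Time derivative.} Since $\mathcal K_b$ is symmetric, $\frac{d}{dt}E^N_b=2\langle\partial_t(\mu^N_b-\mu_b),w\rangle$. By \eqref{eq:s1_continuity_mu_b} and \eqref{eq:s1_continuity_mu_n},
\[
\frac{d}{dt}E^N_b=-2\int w'\,\bigl[\mu^N_b(\log u^N_b)'-\mu_b(\log u_b)'\bigr].
\]
This is justified by \Cref{prop:s1_particle_wellposed}; at the particles $w'$ and $(\log u^N_b)'$ are taken as the symmetric averages, consistent with the self-interaction convention. I split the bracket as
\[
(\mu^N_b-\mu_b)(\log u_b)'+\mu^N_b\bigl[(\log u^N_b)'-(\log u_b)'\bigr].
\]

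\textbf{Term I (linear, transported by the smooth field $a:=(\log u_b)'$).} Using $\mu^N_b-\mu_b=w-b^2w''$ from \eqref{eq:s1_resolvent_identities}, it equals $-2\int a\,w'(w-b^2w'')$. Integrating by parts gives
\[
\int a'w^2-b^2\int a'(w')^2 .
\]
Here $w''$ contains Dirac masses at the particles, where $w'$ jumps. The identity $\int a\,w'\,dw'=\tfrac12\int a\,d(w')^2$ still holds because the average of the one-sided values times the jump equals the jump of the square. Hence $|\mathrm{I}|\le\|a'\|_\infty E^N_b$.

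\textbf{Term II (the nonlinearity).} Write
\[
(\log u^N_b)'-(\log u_b)'=\frac{w'}{u^N_b}-\frac{a\,w}{u^N_b}.
\]
This produces a signed good term $-2\int (w')^2\,\frac{d\mu^N_b}{u^N_b}\le0$, plus $2\int a\,w\,w'\,\frac{d\mu^N_b}{u^N_b}$. Young's inequality absorbs half of the latter into the good term, leaving $\int a^2 w^2\,\frac{d\mu^N_b}{u^N_b}$.

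\textbf{Main obstacle: controlling $\int \varphi\,w^2\,d\mu^N_b/u^N_b$ by $E^N_b$.} This is where the singular empirical measure meets the nonlinearity. I would again substitute $\mu^N_b=u^N_b-b^2(u^N_b)''$ and integrate by parts once:
\[
\int\varphi w^2\frac{d\mu^N_b}{u^N_b}=\int\varphi w^2+b^2\int\Bigl(\frac{\varphi w^2}{u^N_b}\Bigr)'(u^N_b)' .
\]
The key one-dimensional kernel fact, read off from \eqref{eq:explicit_circle_resolvent_kernel}, is $|K_b'|\le K_b/b$ (a $\sinh/\cosh$ bound), which yields $|(\log u^N_b)'|\le 1/b$ everywhere. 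With it the $b^2$ terms are bounded by $\int|\varphi'|w^2+2\int|\varphi||w|\,b|w'|+\int|\varphi|w^2$. So the whole expression is at most $C\|\varphi\|_{W^{1,\infty}}E^N_b$. This is how the positivity and structure of the resolvent quadratic form replace any lower bound on $u^N_b$, which is unavailable uniformly in $N$.

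Applying this with $\varphi=a^2$ bounds Term II by $C\|a\|_{W^{1,\infty}}^2E^N_b$. Together with Term I this gives the differential inequality with the $g$ above. I expect the delicate points to be the distributional justification at the cusps, namely the jump terms in both integrations by parts, and checking that the suppressed self-interaction convention matches the averaged values used there.
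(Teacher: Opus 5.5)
Your argument is correct. Its skeleton is the paper's: the same split into the transport (stress-tensor) term and the nonlinear term, the same Young's inequality, and Gronwall with the integrable rate from \Cref{thm:main_decay}. Your BV chain-rule treatment of Term I (average of the one-sided values of $w'$ times its jump equals the jump of $\tfrac12|w'|^2$) is the same cancellation the paper obtains by excising $\varepsilon$-neighbourhoods of the particles. The time derivative is most cleanly justified as the paper does it: differentiate \eqref{eq:s1_energy_finite_dim}, where noncollision makes the off-diagonal terms $C^1$ and the oddness of $K_b'$ produces exactly your averaged values at the particles.

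The genuine difference is how you control the residual particle average $\frac1N\sum_i\varphi(x_i)w(x_i)^2/u^N_b(x_i)$.

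The paper first pulls out $\|(\log u_b)'\|_{L^\infty}^2$. It then proves $\frac1N\sum_i w(x_i)^2/u^N_b(x_i)\le E^N_b$ by a discrete argument:
\begin{itemize}
\item positivity of the resolvent quadratic form applied to $\mu_b-\frac1N\sum_i a_i\delta_{x_i}$, with $a_i=u_b(x_i)/u^N_b(x_i)$;
\item followed by the symmetrization $\frac1{2N^2}\sum_{i,j}K_b(x_i-x_j)(a_i-a_j)^2\ge0$.
\end{itemize}
This uses only $K_b\ge0$ and positive-definiteness, never touches $K_b'$, and gives constant $1$.

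You instead test $\mu^N_b=u^N_b-b^2(u^N_b)''$ against the Lipschitz function $\varphi w^2/u^N_b$. This is legitimate: the function is continuous, so there is no ambiguity at the atoms. You then close with $|K_b'|\le K_b/b$, which gives $|(\log u^N_b)'|\le 1/b$ a.e. for every configuration. This is valid but specific to the one-dimensional Yukawa kernel, since in $d\ge2$ the ratio $|\nabla_xK_b|/K_b$ is unbounded near the pole. It also gives a worse constant. What it buys is a variable weight $\varphi$.

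The kernel bound is in fact unnecessary. With $\varphi\equiv1$, your integration by parts closes by completing the square:
\[
\frac1N\sum_i\frac{w(x_i)^2}{u^N_b(x_i)}=\int_{\Torus}\frac{w^2}{u^N_b}\,d\mu^N_b=E^N_b-b^2\int_{\Torus}\bigl|w'-(\log u^N_b)'w\bigr|^2\,d\theta\le E^N_b .
\]
This is exactly the paper's formal smooth computation, made rigorous in the particle setting because $w^2/u^N_b$ is continuous.

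A minor correction to your side remark: a lower bound on $u^N_b$ uniform in $N$ does exist, namely $\min K_b$. What fails is uniformity in $b$, since $\min K_b$ is exponentially small in $1/b$.
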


This result, combined with an estimate on the expected value of the modulated energy at initialization, is sufficient to prove our second main result \Cref{thm:main_s1}:

\begin{proof}[Proof of Theorem~\ref{thm:main_s1}]

Let \(\phi\) be smooth and \(1\)-Lipschitz. After subtracting its
mean, Poincare's inequality gives
\[
\|\phi\|_{L^2(\Torus)}\le \|\phi'\|_{L^2(\Torus)}\le1 .
\]
Since
$
\mu^N_b-\mu_b=w-b^2w''$
in distribution,
\[
\begin{aligned}
\left|\int_{\Torus}\phi\,d(\mu^N_b-\mu_b)\right|
&=
\left|
\int_{\Torus}\phi w\,d\theta
+
b^2\int_{\Torus}\phi'w'\,d\theta
\right|
\\
&\le
\bigl(\|\phi\|_{L^2}^2+b^2\|\phi'\|_{L^2}^2\bigr)^{1/2}
(E^N_b)^{1/2}
\\
&\le
\sqrt{1+b^2}\,(E^N_b)^{1/2}.
\end{aligned}
\]
Approximating Lipschitz test functions by smooth ones and using the
Kantorovich--Rubinstein duality gives
\[
W_1(\mu^N_b(t),\mu_b(t))
\le
\sqrt{1+b^2}\,E^N_b(t)^{1/2}.
\]
Combining this bound with \Cref{prop:s1_energy} (the initial law is nonatomic, so the initial particles are distinct almost
surely), taking the supremum in time, then using Jensen's inequality, 
we obtain
\begin{equation}
\label{eq:w1_energy_bound}
\mathbb E\Bigl[\sup_{t\ge0}W_1(\mu^N_b(t),\mu_b(t))\Bigr]
\le
C\,\mathbb E\bigl[E^N_b(0)^{1/2}\bigr]
\le
C\,\mathbb E[E^N_b(0)]^{1/2}.
\end{equation}

It remains to estimate the initial energy. Setting
\[
\nu_N:=\mu_b^N(0)-\mu_{\mathrm{in}},
\]
since \(\nu_N(\Torus)=0\), Parseval's identity and the Fourier representation
of \(K_b\) give
\[
\mathbb E\bigl[E_b^N(0)\bigr]
=
\sum_{k\neq0}
\frac{\mathbb E\bigl[|\widehat{\nu_N}(k)|^2\bigr]}
     {1+b^2k^2}.
\]
For every \(k\neq0\),
\[
\widehat{\nu_N}(k)
=
\frac1N\sum_{j=1}^N
\left(e^{-ikx_j(0)}-\widehat{\mu_{\mathrm{in}}}(k)\right).
\]
The summands are independent and centered, so the cross terms vanish and
\[
\begin{aligned}
\mathbb E\bigl[|\widehat{\nu_N}(k)|^2\bigr]
&=
\frac1{N^2}\sum_{i,j=1}^N
\mathbb E\left[
\left(e^{-ikx_i(0)}-\widehat{\mu_{\mathrm{in}}}(k)\right)
\overline{
\left(e^{-ikx_j(0)}-\widehat{\mu_{\mathrm{in}}}(k)\right)}
\right]
\\
&=
\frac1{N^2}\sum_{j=1}^N
\mathbb E\left[
\left|e^{-ikx_j(0)}-\widehat{\mu_{\mathrm{in}}}(k)\right|^2
\right]
\\
&=
\frac1N\left(1-|\widehat{\mu_{\mathrm{in}}}(k)|^2\right).
\end{aligned}
\]
Consequently, for $0 < b \leq 1$
\[
\mathbb E\bigl[E_b^N(0)\bigr]
=
\frac1N\sum_{k\neq0}
\frac{1-|\widehat{\mu_{\mathrm{in}}}(k)|^2}
     {1+b^2k^2} \le
\frac1N\sum_{k\neq0}\frac1{1+b^2k^2} \leq C \frac 1{Nb}.
\]
Substituting into \eqref{eq:w1_energy_bound} yields the claim.
\end{proof}

We first give a sketch of the proof of Proposition~\ref{prop:s1_energy} for a smooth version of the modulated energy estimate to present the main ideas. In the following computation, all integrals are taken over $\Torus$ with respect to the normalized Lebesgue measure $\mathrm d\theta$. Let
\(\mu_\alpha\), \(\alpha=1,2\), be two smooth positive solutions of \eqref{eq:pde}:
\[
\partial_t\mu_\alpha
=
\partial_\theta\bigl(\mu_\alpha(\log u_\alpha)'\bigr),
\qquad
u_\alpha:=\Kb\mu_\alpha,
\qquad
\mu_\alpha=u_\alpha-b^2(u_\alpha)'',
\qquad \alpha=1,2.
\]
Set \(w:=u_1-u_2=\Kb(\mu_1-\mu_2)\). The modulated energy is
\[
E(t)
:=
\langle \mu_1-\mu_2,\Kb(\mu_1-\mu_2)\rangle
=
\int_{\Torus}\bigl(w^2+b^2|w'|^2\bigr).
\]
Differentiating \(E\), and using the self-adjointness of \(\Kb\), gives
\[
\begin{aligned}
\frac12\frac{d}{dt}E
&=
-\int
\left[
\mu_1(\log u_1)'-\mu_2(\log u_2)'
\right]w' \\
&=
\underbrace{
-\int \frac{\mu_1}{u_1}w'
\bigl(w'-(\log u_2)'w\bigr)
}_{\text{(I)}}
\underbrace{
-\int(\log u_2)'w'(w-b^2w'')
}_{\text{(II)}}.
\end{aligned}
\]
where in the last line we added and subtracted the reference velocity \((\log u_2)'\), and used the identities
\[
(\log u_1)'-(\log u_2)'=
\frac{w'-(\log u_2)'w}{u_1},
\qquad
\mu_1-\mu_2=w-b^2w''.
\]
The first term (I), which is the new term with respect to the
standard modulated energy argument for linear Coulomb/Riesz interactions
\cite{serfaty2017mean,duerinckx_serfaty_2020}, can be estimated by a
pointwise Young's inequality
\[
-w'\bigl(w'-(\log u_2)'w\bigr)
=-|w'|^2+(\log u_2)'ww'
\le \frac12 |(\log u_2)'|^2w^2,
\]
and the positivity of \(\mu_1/u_1\) gives
\[
-\int \frac{\mu_1}{u_1}w'
\bigl(w'-(\log u_2)'w\bigr)
\le
\frac12\|(\log u_2)'\|_{L^\infty}^2
\int\frac{\mu_1}{u_1}w^2 .
\]
This last integral is controlled by \(E\):  using \(\mu_1=u_1-b^2(u_1)''\), integrating by parts on
\(\Torus\) and completing the square,
\[
\begin{aligned}
\int\frac{\mu_1}{u_1}w^2
=
\int w^2+b^2\int (u_1)'
\left(\frac{w^2}{u_1}\right)'
=
E
-
b^2\int
\left|w'-(\log u_1)'w\right|^2
\le E.
\end{aligned}
\]
For the term (II), which is the classical stress-tensor term
in
\cite{serfaty2017mean,duerinckx_serfaty_2020}, integration by parts on \(\Torus\) gives directly
\[
-\int(\log u_2)'w'(w-b^2w'')
=
\frac12\int(\log u_2)''\bigl(w^2-b^2|w'|^2\bigr)
\le
\frac12\|(\log u_2)''\|_{L^\infty}E.
\]
Combining the two estimates gives the formal stability bound
\begin{equation}
\label{eq:mod_energy_ineq}
\frac{d}{dt}E(t)
\le
\left(
\|(\log u_2)'\|_{L^\infty}^2
+
\|(\log u_2)''\|_{L^\infty}
\right)E(t).
\end{equation}
Thus, the growth of the modulated energy is controlled only by the reference solution $\mu_2$, and sufficiently fast decay of $\|(\log u_2)''\|_{L^\infty}$ (uniformly in $b$) concludes the proof by Grönwall. This asymmetry in \eqref{eq:mod_energy_ineq} is what will permit both weak–strong stability and the uniform-in-time mean-field limit. 

The particle proof below is the singular analogue of the one-dimensional calculation above, where we identify 
$$\mu_1 = \mu_b^N \qquad \text{and} \qquad \mu_2 = \mu_b$$
and where, as a consequence, the integration by parts involving $\mu_1$ has to be justified across the cusps of $K_b * \mu^N_b$. The necessary decay of the resulting Grönwall rate 
\begin{equ}\label{eq:abt}
A_b(t)
:=
\|(\log u_b)'(t)\|_{L^\infty(\Torus)}^2
+
\|(\log u_b)''(t)\|_{L^\infty(\Torus)}.
\end{equ}
results from the energy estimates of \Cref{thm:main_decay}, yielding the following lemma.

\begin{lemma}\label{lem:s1_input}
Assume \Cref{assump:initial} and $d=1$. Then there
exist $b_\ast\in(0,1]$ and $C>0$, depending only on $\mu_{\mathrm{in}}$, such that for
every $0<b\le b_\ast$
\[
\int_0^\infty A_b(t)\,dt\le C.
\]
\end{lemma}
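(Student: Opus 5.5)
The plan is to read the bound off directly from \Cref{thm:main_decay}, specialized to $d=1$. The only input needed is that $A_b(t)$ decays exponentially in time with constants uniform in $b$; integrability on $[0,\infty)$ then follows at once.

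First, set $b_\ast:=b_0$, where $b_0$ is the threshold provided by \Cref{thm:main_decay}. With $\M=\Torus$ and $d=1$, the index in \eqref{eq:m} is $m=1$, since $2\cdot1+1=3>5/2$. For every $0<b\le b_\ast$ and every $t\ge0$, \Cref{thm:main_decay} gives
\[
\|\nabla\log u_b(t)\|_{W^{1,\infty}(\Torus)}\le Ce^{-\frac{\lambda_1}{8}t},
\]
with $C$ depending only on $\mu_{\mathrm{in}}$ (and the fixed geometry of $\Torus$), not on $b$. In one dimension $\nabla\log u_b=(\log u_b)'$, and the $W^{1,\infty}$ norm dominates both $\|(\log u_b)'\|_{L^\infty}$ and $\|(\log u_b)''\|_{L^\infty}$. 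Hence
\[
A_b(t)\le C^2e^{-\frac{\lambda_1}{4}t}+Ce^{-\frac{\lambda_1}{8}t}\le C'e^{-\frac{\lambda_1}{8}t}.
\]

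Integrating in time gives
\[
\int_0^\infty A_b\,dt\le \frac{8C'}{\lambda_1}.
\]
For the normalized circle, $\lambda_1=1$, so this bound is uniform in $b\in(0,b_\ast]$.

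The only point needing care is confirming that the constants in \Cref{thm:main_decay} are genuinely $b$-independent. The theorem's statement asserts this: $E_0$ is a supremum over $b\le1$, and $C$ depends only on $(\M,g)$ and $\mu_{\mathrm{in}}$. I also need to check that $b_\ast\le1$ is compatible with the requirement $b\le1$ in the particle results, which holds because $b_0\le\bar b_0\le1$. I do not expect a real obstacle: the lemma is essentially a corollary of the uniform energy decay.
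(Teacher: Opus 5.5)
Your proposal is correct and is essentially the paper's proof: both apply \Cref{thm:main_decay} with $d=1$. In both, the $b$-uniform bound $\|(\log u_b)'(t)\|_{W^{1,\infty}}\le Ce^{-\lambda_1 t/8}$ controls $A_b(t)$ by a single exponential, which is then integrated in time. Your remark that $\lambda_1=1$ is not needed for uniformity, since $\lambda_1$ is a fixed geometric constant independent of $b$.
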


\begin{proof}
By \Cref{thm:main_decay} with $d=1$, after possibly reducing $b_\ast$,
\[
\|(\log u_b)'(t)\|_{W^{1,\infty}(\Torus)}
\le C e^{-t/8}
\qquad\text{for all }t\ge0.
\]
The stated integral bound follows immediately, after increasing $C$.
\end{proof}

\begin{remark}
    We stress again that the technical nature of the truncation estimates required to deal with more singular kernels are the only obstruction to extending this proof approach to higher dimensions (see \Cref{rem:truncation}): indeed the preceding argument has the same form on closed manifolds in arbitrary dimension, with $(\log u_2)''$ replaced by $\nabla^2 \log u_2$. 
\end{remark} 

We now proceed to carry out the proof of the main result of this section in full generality.

  \begin{proof}[Proof of Proposition~\ref{prop:s1_energy}]
All derivatives of \(u^N_b\) and \(w=u^N_b-u_b\) at particle locations are
understood with the no-self-interaction convention.

We first differentiate the finite-dimensional expression for the energy:
\begin{equation}
\label{eq:s1_energy_finite_dim}
E^N_b
=
\frac1{N^2}\sum_{i,j=1}^N K_b(x_i-x_j)
-\frac2N\sum_{i=1}^N u_b(x_i)
+\int_{\Torus}u_b\,d\mu_b .
\end{equation}
By \Cref{lem:s1_noncollision}, the particles remain distinct, so for
$i\neq j$ the difference $x_i(t)-x_j(t)$ never reaches the cusp of $K_b$ at
the origin. Hence all off-diagonal terms are differentiable in time, while
the diagonal terms are constant.
Using the oddness of
\(K_b'\), the equation for the particles, and
\(\partial_tu_b=K_b*\partial_t\mu_b\), we obtain
\[
\frac12\frac{d}{dt}E^N_b
=
-\frac1N\sum_{i=1}^N w'(x_i)(\log u^N_b)'(x_i)
-\int_{\Torus}w\,\partial_t\mu_b\,d\theta .
\]
The continuum equation \eqref{eq:s1_continuity_mu_b}:
\[
\partial_t\mu_b=\partial_\theta\bigl(\mu_b(\log u_b)'\bigr)
\]
then gives, by ordinary integration by parts against the \(H^1\) function \(w\),
\[
\frac12\frac{d}{dt}E^N_b
=
-\frac1N\sum_{i=1}^N
w'(x_i)\Bigl((\log u^N_b)'(x_i)-(\log u_b)'(x_i)\Bigr)
+T_2 ,
\]
where
\[
T_2
:=
-\frac1N\sum_{i=1}^N(\log u_b)'(x_i)w'(x_i)
+
\int_{\Torus}(\log u_b)'w'\,d\mu_b .
\]

For the first term, since \(u^N_b=u_b+w\),
\[
(\log u^N_b)'(x_i)-(\log u_b)'(x_i)
=
\frac{w'(x_i)-(\log u_b)'(x_i)w(x_i)}{u^N_b(x_i)} .
\]
Therefore
\[
\begin{aligned}
&-\frac1N\sum_i
w'(x_i)\Bigl((\log u^N_b)'(x_i)-(\log u_b)'(x_i)\Bigr)
\\
&\qquad
=
-\frac1N\sum_i\frac{|w'(x_i)|^2}{u^N_b(x_i)}
+
\frac1N\sum_i
\frac{(\log u_b)'(x_i)w(x_i)w'(x_i)}{u^N_b(x_i)}
\\
&\qquad
\le
-\frac1N\sum_i\frac{|w'(x_i)|^2}{u^N_b(x_i)}
+
\frac1{2N}\sum_i\frac{|w'(x_i)|^2}{u^N_b(x_i)}
+
\frac1{2N}\sum_i
\frac{|(\log u_b)'(x_i)|^2w(x_i)^2}{u^N_b(x_i)}
\\
&\qquad
\le
\frac12\|(\log u_b)'\|_{L^\infty(\Torus)}^2
\frac1N\sum_i\frac{w(x_i)^2}{u^N_b(x_i)} .
\end{aligned}
\]
It remains to control the residual particle average by \(E^N_b\). Expanding,
\begin{equation}
\label{eq:s1_T1_expansion}
\frac1N\sum_i\frac{w(x_i)^2}{u^N_b(x_i)}
=
\frac1N\sum_i u^N_b(x_i)
-\frac2N\sum_i u_b(x_i)
+
\frac1N\sum_i\frac{u_b(x_i)^2}{u^N_b(x_i)} .
\end{equation}
Set \(a_i:=u_b(x_i)/u^N_b(x_i)\). Positivity of the kernel quadratic form
applied to
\[
\mu_b-\frac1N\sum_i a_i\delta_{x_i}
\]
gives
\[
0
\le
\int_{\Torus}u_b\,d\mu_b
-\frac2N\sum_i\frac{u_b(x_i)^2}{u^N_b(x_i)}
+
\frac1{N^2}\sum_{i,j}K_b(x_i-x_j)a_i a_j .
\]
Using \(u^N_b(x_i)=N^{-1}\sum_jK_b(x_i-x_j)\), the symmetry of \(K_b\), and \(K_b\ge0\),
\[
\begin{aligned}
&\frac1N\sum_i\frac{u_b(x_i)^2}{u^N_b(x_i)}
-
\frac1{N^2}\sum_{i,j}K_b(x_i-x_j)a_i a_j
\\
&\qquad =
\frac1{N^2}\sum_{i,j}K_b(x_i-x_j)(a_i^2-a_i a_j)
\\
&\qquad =
\frac1{2N^2}\sum_{i,j}K_b(x_i-x_j)(a_i^2+a_j^2-2a_i a_j)
\\
&\qquad =
\frac1{2N^2}\sum_{i,j}K_b(x_i-x_j)(a_i-a_j)^2
\ge0 .
\end{aligned}
\]
Hence
\[
\frac1N\sum_i\frac{u_b(x_i)^2}{u^N_b(x_i)}
\le
\int_{\Torus}u_b\,d\mu_b ,
\]
and consequently, comparing \eqref{eq:s1_T1_expansion} with \eqref{eq:s1_energy_finite_dim},
\begin{align*}
\frac1N\sum_i\frac{w(x_i)^2}{u^N_b(x_i)}
&=
\frac1N\sum_i u^N_b(x_i)
-\frac2N\sum_i u_b(x_i)
+
\frac1N\sum_i\frac{u_b(x_i)^2}{u^N_b(x_i)} \\
\le & \frac1N\sum_i u^N_b(x_i)
-\frac2N\sum_i u_b(x_i)
+
\int_{\Torus}u_b\,d\mu_b = 
E^N_b .
\end{align*}
Therefore
\begin{equation}\label{eq:me-t1}
-\frac1N\sum_i
w'(x_i)\Bigl((\log u^N_b)'(x_i)-(\log u_b)'(x_i)\Bigr)
\le
\frac12\|(\log u_b)'\|_{L^\infty}^2E^N_b .
\end{equation}
It remains to estimate \(T_2\). Choose cyclic lifts
\[
y_1<\cdots<y_N<y_1+2\pi .
\]
and let \(\varepsilon>0\) be smaller than half the minimal particle distance.
By \eqref{eq:w}, on the complement of
\[
\Omega_\varepsilon := \mathbb T \setminus \bigcup_i (y_i-\varepsilon,y_i+\varepsilon)
\subset \text{supp}(\mu_b^N)^c\]
the function \(w\) is smooth and satisfies
\[
w-b^2w''=\mu_b^N-\mu_b= -\mu_b ,
\]
so that
\begin{equation}
    \label{eq:T2}
T_2
=
-\frac1N\sum_i(\log u_b)'(y_i)w'(y_i)
-\lim_{\varepsilon\downarrow0}
\int_{\Omega_\epsilon}
(\log u_b)'w'(w-b^2w'')\,d\theta,
\end{equation}
where we used that  the integral over $\Omega_\varepsilon$ converges to the integral over $\Torus$ because $(\log u_b)'w'\mu_b\in L^\infty(\Torus)$ and $|\Torus\setminus\Omega_\varepsilon|\to0$.
Integrating by parts on the smooth pieces gives: 
\[
-\int_{\Omega_\varepsilon}(\log u_b)'ww'\,\mathrm d\theta
=
\frac12\int_{\Omega_\varepsilon}(\log u_b)''w^2\,\mathrm d\theta 
+\frac12\sum_{i=1}^N
\left[(\log u_b)'w^2\right]_{y_i-\varepsilon}^{y_i+\varepsilon}.
\]
Since $(\log u_b)'w^2$ is continuous at each $y_i$, the boundary sum
vanishes as $\varepsilon\downarrow0$. Hence
\[
-\lim_{\varepsilon\downarrow0}
\int_{\Omega_\varepsilon}(\log u_b)'ww'\,\mathrm d\theta
=
\frac12\int_{\Torus}(\log u_b)''w^2\,\mathrm d\theta.
\]
Similarly, since $(\log u_b)''|w'|^2\in L^\infty(\Torus)$,
\begin{equation}
    \label{eq:boundary_contribution}
\begin{aligned}
b^2\lim_{\varepsilon\downarrow0}\int_{\Omega_\varepsilon}
(\log u_b)'w'w''\,d\theta
&=
-\frac{b^2}{2}\int_{\Torus}(\log u_b)''|w'|^2\,d\theta
\\
&\quad
-\frac{b^2}{2\pi}\lim_{\varepsilon\downarrow0}
\sum_i(\log u_b)'(y_i)
\left(
\frac{|w'(y_i+\varepsilon)|^2}{2}
-
\frac{|w'(y_i-\varepsilon)|^2}{2}
\right).
\end{aligned}
\end{equation}
The cusp of \(K_b\) gives
\[
\lim_{\varepsilon\downarrow0}
\bigl(w'(y_i+\varepsilon)-w'(y_i-\varepsilon)\bigr)
=
-\frac{2\pi}{Nb^2},
\]
while the no-self-interaction convention gives
\[
w'(y_i)
=
\lim_{\varepsilon\downarrow0}
\frac{w'(y_i+\varepsilon)+w'(y_i-\varepsilon)}2 .
\]
Therefore
\[
\lim_{\varepsilon\downarrow0}
\left(
\frac{|w'(y_i+\varepsilon)|^2}{2}
-
\frac{|w'(y_i-\varepsilon)|^2}{2}
\right)
=
-\frac{2\pi}{Nb^2}w'(y_i).
\]
The boundary contribution in \eqref{eq:boundary_contribution} is exactly
\[
\frac1N\sum_i(\log u_b)'(y_i)w'(y_i),
\]
which cancels the particle term in the \(T_2\) equality \eqref{eq:T2}. Hence
\begin{equation} \label{eq:me-t2}
T_2
=
\frac12\int_{\Torus}(\log u_b)''
\bigl(w^2-b^2|w'|^2\bigr)\,d\theta \le
\frac12\|(\log u_b)''\|_{L^\infty(\Torus)}E^N_b .
\end{equation}

Combining \eqref{eq:me-t1} and \eqref{eq:me-t2} yields
\[
\frac12\frac{d}{dt}E^N_b(t)
\le
\frac12
\left(
\|(\log u_b)'\|_{L^\infty}^2
+
\|(\log u_b)''\|_{L^\infty}
\right)E^N_b(t).
\]
Finally, recalling the definition of $A_b(t)$ from \eqref{eq:abt} Gronwall's lemma and \Cref{lem:s1_input} give
\[
E^N_b(t)
\le
E^N_b(0)\exp\!\left(\int_0^tA_b(\tau)\,d\tau\right)
\le
C E^N_b(0)\,,
\]
proving the claim.
\end{proof}

\subsubsection*{Acknowledgements}  AA and GB acknowledge the partial support of the Swiss National Science Foundation project grant 2000-1-243054. FP acknowledges support from the Bergman Fellowship of the American Mathematical Society.
GB gratefully acknowledges Filippo Giovagnini for insightful discussions.
\bibliographystyle{alpha}
\bibliography{bibliography}

@article{brenier2017geometric,
  author = {Brenier, Yann},
  title = {Geometric diffusions of 1-currents},
  journal = {Annales de la Facult{\'e} des sciences de Toulouse: Math{\'e}matiques},
  volume = {26},
  number = {4},
  pages = {831--846},
  year = {2017},
  doi = {10.5802/afst.1554},
}

@article{degond1990deterministic,
  author = {Degond, Pierre and Mustieles, Francisco-Jos{\'e}},
  title = {A deterministic approximation of diffusion equations using particles},
  journal = {SIAM Journal on Scientific and Statistical Computing},
  volume = {11},
  number = {2},
  pages = {293--310},
  year = {1990},
  publisher = {SIAM},
  doi = {10.1137/0911018},
}

@article{lions2001methode,
  author = {Lions, Pierre-Louis and Mas-Gallic, Sylvie},
  title = {Une m{\'e}thode particulaire d{\'e}terministe pour des {\'e}quations diffusives non lin{\'e}aires},
  journal = {Comptes Rendus de l'Acad{\'e}mie des Sciences-Series I-Mathematics},
  volume = {332},
  number = {4},
  pages = {369--376},
  year = {2001},
  publisher = {Elsevier},
}

@article{lacombe1999analyse,
  author = {Lacombe, Gilles},
  title = {Analyse d'une {\'e}quation de vitesse de diffusion},
  journal = {Comptes Rendus de l'Acad{\'e}mie des Sciences-Series I-Mathematics},
  volume = {329},
  number = {5},
  pages = {383--386},
  year = {1999},
  publisher = {Elsevier},
}

@inproceedings{lacombe1999presentation,
  author = {Lacombe, Gilles and Mas-Gallic, Sylvie},
  title = {Presentation and analysis of a diffusion-velocity method},
  booktitle = {ESAIM: Proceedings},
  volume = {7},
  pages = {225--233},
  year = {1999},
  organization = {EDP Sciences},
}

@article{mas2002diffusion,
  author = {Mas-Gallic, Sylvie},
  title = {The diffusion velocity method: a deterministic way of moving the nodes for solving diffusion equations},
  journal = {Transport Theory and Statistical Physics},
  volume = {31},
  number = {4--6},
  pages = {595--605},
  year = {2002},
  publisher = {Taylor \& Francis},
  doi = {10.1081/TT-120015516},
}

@article{geshkovski2025mathematical,
  author = {Geshkovski, Borjan and Letrouit, Cyril and Polyanskiy, Yury and Rigollet, Philippe},
  title = {A mathematical perspective on transformers},
  journal = {Bulletin of the American Mathematical Society},
  volume = {62},
  number = {3},
  pages = {427--479},
  year = {2025},
  doi = {10.1090/bull/1863},
}

@inproceedings{Rig26,
  author = {Philippe Rigollet},
  title = {The Mean-Field Dynamics of Transformers},
  booktitle = {Proceedings of the International Congress of Mathematicians 2026},
  volume = {7},
  pages = {389--404},
  year = {2026},
  publisher = {SIAM},
  doi = {10.1137/25M1826342},
}

@article{carrillo2019blob,
  author = {Carrillo, Jos{\'e} Antonio and Craig, Katy and Patacchini, Francesco S},
  title = {A blob method for diffusion},
  journal = {Calculus of Variations and Partial Differential Equations},
  volume = {58},
  number = {2},
  pages = {53},
  year = {2019},
  publisher = {Springer},
  doi = {10.1007/s00526-019-1486-3},
}

@article{carrillo2024nonlocallinear,
  author = {Carrillo, Jos{\'e} Antonio and Esposito, Antonio and Skrzeczkowski, Jakub and Wu, Jeremy Sheung-Him},
  title = {Nonlocal particle approximation for linear and fast diffusion equations},
  journal = {arXiv preprint arXiv:2408.02345},
  year = {2024},
  doi = {10.48550/arXiv.2408.02345},
}

@article{carrillo2016gradient,
  author = {Carrillo, Jos{\'e} Antonio and Patacchini, Francesco Saverio and Sternberg, Peter and Wolansky, Gershon},
  title = {Convergence of a particle method for diffusive gradient flows in one dimension},
  journal = {SIAM Journal on Mathematical Analysis},
  volume = {48},
  number = {6},
  pages = {3708--3741},
  year = {2016},
  doi = {10.1137/16M1077210},
}

@book{bahouri2011fourier,
  author = {Bahouri, Hajer and Chemin, Jean-Yves and Danchin, Rapha{\"e}l},
  title = {{Fourier} Analysis and Nonlinear Partial Differential Equations},
  year = {2011},
  publisher = {Springer},
  address = {Berlin},
  doi = {10.1007/978-3-642-16830-7},
}

@book{taylor2011pde3,
  author = {Taylor, Michael E.},
  title = {Partial Differential Equations III: Nonlinear Equations},
  edition = {2},
  series = {Applied Mathematical Sciences},
  volume = {117},
  year = {2011},
  publisher = {Springer},
  address = {New York},
  doi = {10.1007/978-1-4419-7049-7},
}

@book{brezis2011functional,
  author = {Brezis, Haim},
  title = {Functional Analysis, {Sobolev} Spaces and Partial Differential Equations},
  year = {2011},
  publisher = {Springer},
  address = {New York},
  doi = {10.1007/978-0-387-70914-7},
}

@book{gilbarg_trudinger_2001,
  author = {Gilbarg, David and Trudinger, Neil S.},
  title = {Elliptic Partial Differential Equations of Second Order},
  edition = {2},
  series = {Classics in Mathematics},
  volume = {224},
  year = {2001},
  publisher = {Springer},
  address = {Berlin},
  doi = {10.1007/978-3-642-61798-0},
}

@misc{xu2004spectral,
  author = {Xu, Bin},
  title = {Derivatives of spectral function and {Sobolev} norms of eigenfunctions on a closed {Riemannian} manifold},
  pages = {60--77},
  year = {2004},
  howpublished = {RIMS K{\=o}ky{\=u}roku 1389},
}

@incollection{hebey_robert_2008,
  author = {Hebey, Emmanuel and Robert, Fr{\'e}d{\'e}ric},
  editor = {Krupka, Demeter and Saunders, David},
  title = {{Sobolev} spaces on manifolds},
  booktitle = {Handbook of Global Analysis},
  pages = {375--415},
  year = {2008},
  publisher = {Elsevier},
  address = {Amsterdam},
  doi = {10.1016/B978-044452833-9.50008-5},
}

@article{wang2004gradient,
  author = {Wang, Feng-Yu},
  title = {Gradient estimates of {Dirichlet} heat semigroups and application to isoperimetric inequalities},
  journal = {The Annals of Probability},
  volume = {32},
  number = {1A},
  pages = {424--440},
  year = {2004},
  doi = {10.1214/aop/1078415841},
}

@article{serfaty2017mean,
  author = {Serfaty, Sylvia},
  title = {Mean Field Limits of the {Gross--Pitaevskii} and Parabolic {Ginzburg--Landau} Equations},
  journal = {Journal of the American Mathematical Society},
  volume = {30},
  number = {3},
  pages = {713--768},
  year = {2017},
  doi = {10.1090/jams/872},
}

@article{duerinckx_serfaty_2020,
  author = {Serfaty, Sylvia},
  title = {Mean Field Limit for {Coulomb}-Type Flows},
  journal = {Duke Mathematical Journal},
  volume = {169},
  number = {15},
  pages = {2887--2935},
  year = {2020},
  note = {With an appendix by Mitia Duerinckx and Sylvia Serfaty},
  doi = {10.1215/00127094-2020-0019},
}

@inproceedings{sander2022sinkformers,
  author = {Sander, Michael E and Ablin, Pierre and Blondel, Mathieu and Peyr{\'e}, Gabriel},
  title = {Sinkformers: Transformers with doubly stochastic attention},
  booktitle = {International Conference on Artificial Intelligence and Statistics},
  series = {Proceedings of Machine Learning Research},
  volume = {151},
  pages = {3515--3530},
  year = {2022},
  organization = {PMLR},
}

@inproceedings{geshkovski2024emergence,
  author = {Geshkovski, Borjan and Letrouit, Cyril and Polyanskiy, Yury and Rigollet, Philippe},
  title = {The emergence of clusters in self-attention dynamics},
  booktitle = {Advances in Neural Information Processing Systems},
  volume = {36},
  pages = {57026--57037},
  year = {2023},
}

@incollection{sznitman1991topics,
  author = {Sznitman, Alain-Sol},
  title = {Topics in propagation of chaos},
  booktitle = {{\'E}cole d'{\'E}t{\'e} de Probabilit{\'e}s de Saint-Flour XIX--1989},
  series = {Lecture Notes in Mathematics},
  volume = {1464},
  pages = {165--251},
  year = {1991},
  publisher = {Springer},
  doi = {10.1007/BFb0085169},
}

@inproceedings{liu2016svgd,
  author = {Liu, Qiang and Wang, Dilin},
  title = {{Stein} Variational Gradient Descent: A General Purpose {Bayesian} Inference Algorithm},
  booktitle = {Advances in Neural Information Processing Systems},
  volume = {29},
  year = {2016},
  url = {https://papers.nips.cc/paper_files/paper/2016/hash/b3ba8f1bee1238a2f37603d90b58898d-Abstract.html},
}

@book{CheNilRig25,
  author = {Chewi, Sinho and Niles-Weed, Jonathan and Rigollet, Philippe},
  title = {Statistical optimal transport},
  series = {Lecture Notes in Mathematics},
  volume = {2364},
  year = {2025},
  publisher = {Springer, Cham},
  doi = {10.1007/978-3-031-85160-5},
}

@misc{Che26,
  author = {Chewi, Sinho},
  title = {Log-concave sampling},
  year = {2026},
  howpublished = {Book draft. \url{https://chewisinho.github.io/}},
  note = {Final draft dated August 20, 2026},
  url = {https://chewisinho.github.io/},
}

@article{peyre2018comparison,
  author = {Peyre, R{\'e}mi},
  title = {Comparison between ${W}_2$ distance and $\dot{H}^{-1}$ norm, and localization of {W}asserstein distance},
  journal = {ESAIM: Control, Optimisation and Calculus of Variations},
  volume = {24},
  number = {4},
  pages = {1489--1501},
  year = {2018},
  publisher = {EDP Sciences},
  doi = {10.1051/cocv/2017050},
}

@article{bresch2019modulated,
  author = {Bresch, Didier and Jabin, Pierre-Emmanuel and Wang, Zhenfu},
  title = {Modulated free energy and mean field limit},
  journal = {S{\'e}minaire Laurent Schwartz---EDP et applications},
  pages = {1--22},
  year = {2019--2020},
  note = {Talk no. 2},
  doi = {10.5802/slsedp.135},
}

@inproceedings{multiscale2025transformers,
  author = {Bruno, Giuseppe and Pasqualotto, Federico and Agazzi, Andrea},
  title = {A multiscale analysis of mean-field transformers in the moderate interaction regime},
  booktitle = {Advances in Neural Information Processing Systems},
  volume = {38},
  year = {2025},
  url = {https://openreview.net/forum?id=WCRPgBpbcA},
}

@article{cheng_li_yau_1981_upper_heat,
  author = {Cheng, Shiu-Yuen and Li, Peter and Yau, Shing-Tung},
  title = {On the upper estimate of the heat kernel of a complete {Riemannian} manifold},
  journal = {American Journal of Mathematics},
  volume = {103},
  pages = {1021--1063},
  year = {1981},
  doi = {10.2307/2374257},
}

@article{hsu1999estimates,
  author = {Hsu, Elton P.},
  title = {Estimates of derivatives of the heat kernel on a compact {Riemannian} manifold},
  journal = {Proceedings of the American Mathematical Society},
  volume = {127},
  number = {12},
  pages = {3739--3744},
  year = {1999},
  doi = {10.1090/S0002-9939-99-04967-9},
}

@article{ambrosio2014metric,
  author = {Ambrosio, Luigi and Gigli, Nicola and Savar{\'e}, Giuseppe},
  title = {Metric measure spaces with {Riemannian Ricci} curvature bounded from below},
  journal = {Duke Mathematical Journal},
  volume = {163},
  number = {7},
  pages = {1405--1490},
  year = {2014},
  doi = {10.1215/00127094-2681605},
}

@article{dobrushin1979vlasov,
  author = {Dobrushin, Roland L'vovich},
  title = {Vlasov equations},
  journal = {Functional Analysis and Its Applications},
  volume = {13},
  number = {2},
  pages = {115--123},
  year = {1979},
  publisher = {Springer},
  doi = {10.1007/BF01077243},
}

@article{chaintron2022propagation,
  author = {Chaintron, Louis-Pierre and Diez, Antoine},
  title = {Propagation of chaos: a review of models, methods and applications. {I}. Models and methods},
  journal = {arXiv preprint arXiv:2203.00446},
  year = {2022},
}

@article{Dev83,
  author = {Devroye, Luc},
  title = {The Equivalence of Weak, Strong and Complete Convergence in {$L_1$} for Kernel Density Estimates},
  journal = {The Annals of Statistics},
  volume = {11},
  number = {3},
  pages = {896--904},
  year = {1983},
  doi = {10.1214/aos/1176346255},
}

@article{duerinckx2016mean,
  author = {Duerinckx, Mitia},
  title = {Mean-field limits for some {Riesz} interaction gradient flows},
  journal = {SIAM Journal on Mathematical Analysis},
  volume = {48},
  number = {3},
  pages = {2269--2300},
  year = {2016},
  publisher = {SIAM},
  doi = {10.1137/15M1042620},
}

@article{serfaty2020mean,
  author = {Serfaty, Sylvia},
  title = {Mean field limit for {Coulomb}-type flows},
  journal = {Duke Mathematical Journal},
  volume = {169},
  number = {15},
  pages = {2887--2935},
  year = {2020},
  note = {With an appendix by Mitia Duerinckx and Sylvia Serfaty},
  doi = {10.1215/00127094-2020-0019},
}

@article{nguyen2021mean,
  author = {Nguyen, Quoc Hung and Rosenzweig, Matthew and Serfaty, Sylvia},
  title = {Mean-field limits of {Riesz}-type singular flows},
  journal = {arXiv preprint arXiv:2107.02592},
  year = {2021},
}

@article{rosenzweig2023global,
  author = {Rosenzweig, Matthew and Serfaty, Sylvia},
  title = {Global-in-time mean-field convergence for singular {Riesz}-type diffusive flows},
  journal = {The Annals of Applied Probability},
  volume = {33},
  number = {2},
  pages = {754--798},
  year = {2023},
  publisher = {Institute of Mathematical Statistics},
}

@article{rosenzweig2026sharp,
  author = {Rosenzweig, Matthew and Serfaty, Sylvia},
  title = {Sharp commutator estimates of all order for {Coulomb} and {Riesz} modulated energies},
  journal = {Communications on Pure and Applied Mathematics},
  volume = {79},
  number = {2},
  pages = {207--292},
  year = {2026},
  publisher = {Wiley Online Library},
}

@article{hauray2012mean,
  author = {Hauray, Maxime},
  title = {Mean field limit for the one dimensional {Vlasov-Poisson} equation},
  journal = {S{\'e}minaire Laurent Schwartz---EDP et applications},
  pages = {1--16},
  year = {2012--2013},
  note = {Talk no. 21},
  doi = {10.5802/slsedp.47},
}

@article{craig2016blob,
  author = {Craig, Katy and Bertozzi, Andrea},
  title = {A blob method for the aggregation equation},
  journal = {Mathematics of computation},
  volume = {85},
  number = {300},
  pages = {1681--1717},
  year = {2016},
}

@article{burger2023porous,
  author = {Burger, Martin and Esposito, Antonio},
  title = {Porous medium equation and cross-diffusion systems as limit of nonlocal interaction},
  journal = {Nonlinear Analysis},
  volume = {235},
  pages = {113347},
  year = {2023},
  publisher = {Elsevier},
  doi = {10.1016/j.na.2023.113347},
}

@article{carrillo2023nonlocal,
  author = {Carrillo, Jos{\'e} Antonio and Esposito, Antonio and Wu, Jeremy Sheung-Him},
  title = {Nonlocal approximation of nonlinear diffusion equations},
  journal = {Calculus of Variations and Partial Differential Equations},
  volume = {63},
  number = {4},
  pages = {100},
  year = {2024},
  doi = {10.1007/s00526-024-02690-z},
}

@article{craig2023blob,
  author = {Craig, Katy and Elamvazhuthi, Karthik and Haberland, Matt and Turanova, Olga},
  title = {A blob method for inhomogeneous diffusion with applications to multi-agent control and sampling},
  journal = {Mathematics of Computation},
  volume = {92},
  number = {344},
  pages = {2575--2654},
  year = {2023},
}

@article{di2025approximation,
  author = {Di Francesco, Marco and Iorio, Valeria and Schmidtchen, Markus},
  title = {The approximation of the quadratic porous medium equation via nonlocal interacting particles subject to repulsive {Morse} potential},
  journal = {SIAM Journal on Mathematical Analysis},
  volume = {57},
  number = {5},
  pages = {4631--4679},
  year = {2025},
  publisher = {SIAM},
  doi = {10.1137/24M1632760},
}

@article{doumic2024multispecies,
  author = {Doumic, Marie and Hecht, Sophie and Perthame, Beno{\^\i}t and Peurichard, Diane},
  title = {Multispecies cross-diffusions: from a nonlocal mean-field to a porous medium system without self-diffusion},
  journal = {Journal of Differential Equations},
  volume = {389},
  pages = {228--256},
  year = {2024},
  publisher = {Elsevier},
  doi = {10.1016/j.jde.2024.01.017},
}

@article{elbar2024limit,
  author = {Elbar, Charles and Perthame, Beno{\^\i}t and Skrzeczkowski, Jakub},
  title = {On the limit problem arising in the kinetic derivation of a {Cahn-Hilliard} equation},
  journal = {Communications in Mathematical Physics},
  volume = {405},
  number = {11},
  pages = {273},
  year = {2024},
  publisher = {Springer},
  doi = {10.1007/s00220-024-05142-z},
}

@article{carrillo2020particle,
  author = {Carrillo, Jose Antonio and Hu, Jingwei and Wang, Li and Wu, Jeremy},
  title = {A particle method for the homogeneous {Landau} equation},
  journal = {Journal of Computational Physics: X},
  volume = {7},
  pages = {100066},
  year = {2020},
  publisher = {Elsevier},
}

@article{carrillo2024landau,
  author = {Carrillo, Jos{\'e} Antonio and Delgadino, Matias G and Desvillettes, Laurent and Wu, Jeremy S-H},
  title = {The {Landau} equation as a gradient flow},
  journal = {Analysis \& PDE},
  volume = {17},
  number = {4},
  pages = {1331--1375},
  year = {2024},
  publisher = {Mathematical Sciences Publishers},
  doi = {10.2140/apde.2024.17.1331},
}

@article{craig2025blob,
  author = {Craig, Katy and Elamvazhuthi, Karthik and Lee, Harlin},
  title = {A blob method for mean field control with terminal constraints},
  journal = {ESAIM: Control, Optimisation and Calculus of Variations},
  volume = {31},
  pages = {20},
  year = {2025},
  publisher = {EDP Sciences},
  doi = {10.1051/cocv/2025010},
}

@article{craig2025nonlocal,
  author = {Craig, Katy and Jacobs, Matt and Turanova, Olga},
  title = {Nonlocal approximation of slow and fast diffusion},
  journal = {Journal of Differential Equations},
  volume = {426},
  pages = {782--852},
  year = {2025},
  publisher = {Elsevier},
}

@article{oelschlager1985law,
  author = {Oelschl{\"a}ger, Karl},
  title = {A law of large numbers for moderately interacting diffusion processes},
  journal = {Zeitschrift f{\"u}r Wahrscheinlichkeitstheorie und verwandte Gebiete},
  volume = {69},
  number = {2},
  pages = {279--322},
  year = {1985},
  publisher = {Springer},
  doi = {10.1007/BF02450284},
}

@article{oelschlager1990large,
  author = {Oelschl{\"a}ger, Karl},
  title = {Large systems of interacting particles and the porous medium equation},
  journal = {Journal of differential equations},
  volume = {88},
  number = {2},
  pages = {294--346},
  year = {1990},
  publisher = {Elsevier},
}

@article{oelschlager2001sequence,
  author = {Oelschl{\"a}ger, Karl},
  title = {A sequence of integro-differential equations approximating a viscous porous medium equation},
  journal = {Zeitschrift f{\"u}r Analysis und ihre Anwendungen},
  volume = {20},
  number = {1},
  pages = {55--91},
  year = {2001},
}

@article{figalli2008convergence,
  author = {Figalli, Alessio and Philipowski, Robert},
  title = {Convergence to the viscous porous medium equation and propagation of chaos},
  journal = {ALEA Lat. Am. J. Probab. Math. Stat},
  volume = {4},
  pages = {185--203},
  year = {2008},
}

@article{morale2005interacting,
  author = {Morale, Daniela and Capasso, Vincenzo and Oelschl{\"a}ger, Karl},
  title = {An interacting particle system modelling aggregation behavior: from individuals to populations},
  journal = {Journal of mathematical biology},
  volume = {50},
  number = {1},
  pages = {49--66},
  year = {2005},
  publisher = {Springer},
  doi = {10.1007/s00285-004-0279-1},
}

@article{chen2021rigorous,
  author = {Chen, Li and Daus, Esther S and Holzinger, Alexandra and J{\"u}ngel, Ansgar},
  title = {Rigorous derivation of population cross-diffusion systems from moderately interacting particle systems},
  journal = {Journal of Nonlinear Science},
  volume = {31},
  number = {6},
  pages = {94},
  year = {2021},
  publisher = {Springer},
  doi = {10.1007/s00332-021-09747-9},
}

@article{daneri2022deterministic,
  author = {Daneri, Sara and Radici, Emanuela and Runa, Eris},
  title = {Deterministic particle approximation of aggregation-diffusion equations on unbounded domains},
  journal = {Journal of Differential Equations},
  volume = {312},
  pages = {474--517},
  year = {2022},
  publisher = {Elsevier},
}

@article{carrillo2025rate,
  author = {Carrillo, Jos{\'e} Antonio and Elbar, Charles and Fronzoni, Stefano and Skrzeczkowski, Jakub},
  title = {Rate of convergence for a nonlocal-to-local limit in one dimension},
  journal = {arXiv preprint arXiv:2505.07015},
  year = {2025},
}

@article{carrillo2026new,
  author = {Carrillo, Jos{\'e} Antonio and Gwiazda, Piotr and Skrzeczkowski, Jakub},
  title = {A new formula for the {Wasserstein} distance between solutions to (nonlinear) continuity equations},
  journal = {arXiv preprint arXiv:2603.25634},
  year = {2026},
}

@article{chizat2026quantitative_svgd,
  author = {Chizat, L{\'e}na{\"\i}c and Colombo, Maria and Colombo, Roberto and Fern{\'a}ndez-Real, Xavier},
  title = {Quantitative Local Convergence of Mean-Field {Stein} Variational Gradient Flow},
  journal = {arXiv preprint arXiv:2605.09456},
  year = {2026},
}

@article{chizat2026quantitative_kmd,
  author = {Chizat, L{\'e}na{\"\i}c and Colombo, Maria and Colombo, Roberto and Fern{\'a}ndez-Real, Xavier},
  title = {Quantitative convergence of {Wasserstein} gradient flows of Kernel Mean Discrepancies},
  journal = {arXiv preprint arXiv:2603.01977},
  year = {2026},
}

@article{amassad2025deterministic,
  author = {Amassad, Amina and Zhou, Datong},
  title = {A deterministic particle method for the porous media equation},
  journal = {arXiv preprint arXiv:2501.18745},
  year = {2025},
}

@article{larios2009higher,
  author = {Larios, Adam and Titi, Edriss S},
  title = {On the higher-order global regularity of the inviscid {Voigt}-regularization of three-dimensional hydrodynamic models},
  journal = {arXiv preprint arXiv:0910.3354},
  year = {2009},
}

@article{deng2026drifting,
  author = {Deng, Mingyang and Li, He and Li, Tianhong and Du, Yilun and He, Kaiming},
  title = {Generative Modeling via Drifting},
  journal = {arXiv preprint arXiv:2602.04770},
  year = {2026},
  url = {https://arxiv.org/abs/2602.04770},
}

@article{lai2026unified,
  author = {Lai, Chieh-Hsin and Nguyen, Bac and Murata, Naoki and Takida, Yuhta and Uesaka, Toshimitsu and Mitsufuji, Yuki and Ermon, Stefano and Tao, Molei},
  title = {A Unified View of Score-Based and Drifting Models},
  journal = {arXiv preprint arXiv:2603.07514},
  year = {2026},
  url = {https://arxiv.org/abs/2603.07514},
}

@article{turan2026drifting,
  author = {Turan, Erkan and Dufour, Nicolas and Ovsjanikov, Maks},
  title = {Generative Drifting is Secretly Score Matching: a Spectral and Variational Perspective},
  journal = {arXiv preprint arXiv:2603.09936},
  year = {2026},
  url = {https://arxiv.org/abs/2603.09936},
}

@article{girouard2021large,
  author = {Girouard, Alexandre and Lagac{\'e}, Jean},
  title = {Large {Steklov} eigenvalues via homogenisation on manifolds},
  journal = {Inventiones mathematicae},
  volume = {226},
  number = {3},
  pages = {1011--1056},
  year = {2021},
  publisher = {Springer},
  doi = {10.1007/s00222-021-01058-w},
}

@article{peletier2025nonlinear,
  author = {Peletier, Mark A and Shalova, Anna},
  title = {Nonlinear diffusion limit of non-local interactions on a sphere},
  journal = {arXiv preprint arXiv:2512.03185},
  year = {2025},
}

\appendix
\section{Kernel Estimates}\label{app:kernel_facts}

We first prove the two kernel facts stated in \Cref{sec:setting}, and then
record the estimates required for the vector field
\begin{equation}
\label{eq:def_V}
V[\mu]:=-\nabla\log \K_b\mu .
\end{equation}

\begin{lemma}[Distance bounds for \(K_b\)]\label{lem:wellposed_kernel_distance_bounds} For every $b>0$, there exists $C_{b,\M}>0$ such that, for all $x,y\in \M$ with $x\neq y$:
\begin{equation}\label{eq:wellposed_kernel_distance_zero_first}
|K_b(x,y)|\le C_{b,\M}\Phi_d(\operatorname{dist}_{\M}(x,y)),
\qquad
|\nabla_xK_b(x,y)|+|\nabla_yK_b(x,y)|
\le C_{b,\M}\operatorname{dist}_{\M}(x,y)^{1-d},
\end{equation}
and
\begin{equation}\label{eq:wellposed_kernel_distance_second}
|\nabla_x^2K_b(x,y)|+|\nabla_y\nabla_xK_b(x,y)|
\le C_{b,\M}\operatorname{dist}_{\M}(x,y)^{-d},
\end{equation}
where
\[
\Phi_d(r):=
\begin{cases}
1, & d=1,\\
\log(2+r^{-1}), & d=2,\\
r^{2-d}, & d\ge3.
\end{cases}
\]
\end{lemma}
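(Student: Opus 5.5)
The plan is to use the heat-kernel representation \eqref{eq:Kb_heat_kernel_representation},
\[
K_b(x,y)=\int_0^\infty e^{-t}p_{b^2t}(x,y)\,dt=\frac{1}{b^2}\int_0^\infty e^{-s/b^2}p_s(x,y)\,ds,
\]
together with standard pointwise Gaussian bounds for the heat kernel on a closed manifold and its first and second derivatives. These are the Li--Yau/Cheng--Li--Yau upper bounds \cite{cheng_li_yau_1981_upper_heat} and Hsu's derivative estimates \cite{hsu1999estimates}. For $0<s\le1$ they read
\[
|\nabla^k p_s(x,y)|\le C_{\M} s^{-(d+k)/2}\exp\!\Big(-\frac{r^2}{Cs}\Big),\qquad k=0,1,2,\quad r=\operatorname{dist}_{\M}(x,y).
\]
Here the mixed derivatives $\nabla_y\nabla_x$ are included for $k=2$. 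For $s\ge1$ all these quantities are uniformly bounded. Since $b$ is fixed and the constant may depend on $b$, the weight $b^{-2}e^{-s/b^2}$ is harmless: it is bounded by $b^{-2}$ on $(0,1]$ and integrable on $[1,\infty)$. The contribution from $s\ge1$ is therefore $O_b(1)$, which is compatible with every claimed bound because $\Phi_d(r)$, $r^{1-d}$ and $r^{-d}$ are bounded below on the compact manifold.

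First I would justify differentiating under the integral for $x\ne y$. Fix $r>0$. The Gaussian factor makes $\int_0^1 s^{-(d+k)/2}e^{-r^2/(Cs)}\,ds$ converge locally uniformly in $x\ne y$. Dominated convergence then gives
\[
\nabla^k K_b=b^{-2}\int_0^\infty e^{-s/b^2}\nabla^k p_s\,ds,
\]
and this also recovers the smoothness off the diagonal stated in \Cref{lem:Kb_kernel_sobolev}.

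The core step is the elementary integral estimate
\[
I_k(r):=\int_0^1 s^{-(d+k)/2}e^{-r^2/(Cs)}\,ds .
\]
Substitute $s=r^2\sigma$:
\[
I_k(r)=r^{2-d-k}\int_0^{1/r^2}\sigma^{-(d+k)/2}e^{-1/(C\sigma)}\,d\sigma .
\]
There are three cases.
\begin{itemize}
\item If $(d+k)/2>1$, the $\sigma$-integral converges over $(0,\infty)$, so $I_k\lesssim r^{2-d-k}$. This gives $r^{2-d}$ for $k=0$, $d\ge3$; $r^{1-d}$ for $k=1$, $d\ge2$; and $r^{-d}$ for $k=2$, all $d\ge1$.
\item If $(d+k)/2=1$, the integral grows like $\log(1/r^2)$. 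This produces $\Phi_2$ for $(d,k)=(2,0)$. For $(d,k)=(1,1)$ the claimed bound is $r^{0}=1$, so this case needs separate treatment (see below).
\item If $(d+k)/2<1$, that is $(d,k)=(1,0)$, the integral $\int_0^1 s^{-1/2}\,ds$ is simply bounded, giving $\Phi_1=1$.
\end{itemize}
For $r$ bounded away from zero everything is trivially bounded. This yields \eqref{eq:wellposed_kernel_distance_zero_first} and \eqref{eq:wellposed_kernel_distance_second}, with $\nabla_y$ handled by the symmetry $K_b(x,y)=K_b(y,x)$.

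The main obstacle I expect is the borderline case $d=1$, $k=1$, where the naive Gaussian integral gives $\log(1/r)$ instead of the claimed bound $C$. Two routes are available.
\begin{itemize}
\item Use the explicit formula \eqref{eq:explicit_circle_resolvent_kernel}: on the circle, $K_b'$ is a bounded multiple of $\sinh((\pi-r)/b)$, which is clearly bounded.
\item Argue intrinsically, exploiting cancellation: $\nabla_x p_s$ behaves like $-(x-y)s^{-1}p_s$, and this extra factor $r$ restores a convergent integral $r\int_0^1 s^{-3/2}e^{-r^2/(Cs)}\,ds\lesssim 1$.
\end{itemize}
Since every one-dimensional closed manifold is a circle, I would take the explicit formula as the route for this case.
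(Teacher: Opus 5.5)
Your proposal is correct and follows essentially the paper's proof: the heat-kernel representation combined with the Cheng--Li--Yau Gaussian bound and Hsu's derivative estimates, the same three-case evaluation of $\int_0^1 s^{-(d+j)/2}e^{-r^2/(Cs)}\,ds$, and the explicit circle formula in dimension one. The only difference is that the paper handles all of $d=1$ through the explicit formula, bounding the second derivative on each side of the pole via $K_b-b^2\partial_\theta^2K_b=0$. You instead invoke the explicit formula only for the borderline case $(d,k)=(1,1)$, which you correctly identified as the one the Gaussian bound cannot reach.
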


\begin{proof} We denote throughout $r:=\operatorname{dist}_{\M}(x,y)$.
When  \(d=1\), the explicit formula \eqref{eq:explicit_circle_resolvent_kernel}
shows that \(K_b\) and its first derivative are bounded. On
each side of the pole, the equation \((\I-b^2\partial_\theta^2)K_b=0\) gives
\(|\partial_\theta^2K_b|\le C_b\). Since \(r^{-1}\ge1\)  for \(r\le1/2\), the
three displayed estimates follow  near the pole, while away from the pole the
same estimates follow by compactness, after enlarging the constant.

Assume now \(d\ge2\). By \eqref{eq:Kb_heat_kernel_representation}, after the change of variables \(s=b^2t\),
\[
K_b(x,y)=b^{-2}\int_0^\infty e^{-s/b^2}p_s(x,y)\,ds.
\]
Combining the heat-kernel Gaussian upper bound
\cite[Theorem~4]{cheng_li_yau_1981_upper_heat}, which on a fixed closed
manifold gives constants \(C_{\M},c_{\M}>0\) such that
\[
p_s(x,y)
\le
C_{\M}s^{-d/2}
\exp\left(-\frac{\operatorname{dist}_{\M}(x,y)^2}{c_{\M}s}\right),
\qquad 0<s\le1 .
\]
with Hsu's derivative estimate
\cite[Corollary~1.2]{hsu1999estimates},
\[
|\nabla_x^j p_s(x,y)|
\le
C_{\M,j}
\left(\frac{\operatorname{dist}_{\M}(x,y)}{s}+\frac{1}{\sqrt{s}}\right)^j
p_s(x,y),
\qquad j\le2,
\]
 and absorbing the polynomial factor into the Gaussian, we obtain that for every mixed derivative $D^\beta_{x,y}$ of total order $|\beta|=j\leq 2$,
\[
|D^\beta_{x,y}p_s(x,y)|
\le
C_{\M,j}s^{-(d+j)/2}
\exp\left(-\frac{\operatorname{dist}_{\M}(x,y)^2}{C_{\M,j}s}\right)
\qquad 0<s\le1 .
\]
 For \(s\ge1\), compactness and smoothness of the heat kernel give a bounded
contribution, depending on \(b\) and \(\M\). Hence
\[
|D^\beta_{x,y}K_b(x,y)|
\le
C_{b,\M}
+C_{b,\M}\int_0^1
s^{-(d+j)/2}\exp\left(-\frac{r^2}{C_{\M,j}s}\right)\,ds .
\]
With the change of variables \(q=r^2/(C_{\M,j}s)\),
\[
\int_0^1
s^{-(d+j)/2}\exp\left(-\frac{r^2}{C_{\M,j}s}\right)\,ds
=
C_{\M,j}r^{2-d-j}
\int_{r^2/C_{\M,j}}^\infty q^{(d+j)/2-2}e^{-q}\,dq .
\]
 Therefore the last integral is bounded by
\[
C_{\M,j}
\begin{cases}
1, & d+j<2,\\
\log(2+r^{-1}), & d+j=2,\\
r^{2-d-j}, & d+j>2.
\end{cases}
\]
Taking \(j=0\) gives the bound with \(\Phi_d\), taking \(j=1\) gives the first
derivative bound, and taking \(j=2\) gives both the pure and mixed second
derivative bounds. Since \(\M\) is compact, the bounded terms are
absorbed into the displayed singular bounds after enlarging \(C_{b,\M}\).
\end{proof}

For the next kernel estimates, we adapt the near-field/far-field decompositions of \cite[Appendix A]{chizat2026quantitative_kmd}.

\begin{lemma}[Fixed-\(b\) kernel estimates]\label{lem:wellposed_kernel_estimates}
Let \(M>0\). If \(\rho\) is a probability density on  \(\M\) with
\(\|\rho\|_{L^\infty}\le M\), and \(u=\Kb\rho\), then there exist $\kappa_b>0$ and $C_{b,M,\M}>0$ such that
\begin{equation}\label{eq:wellposed_fixed_b_basic}
\kappa_b\le u\le M,\qquad \|\nabla u\|_{L^\infty}\le C_{b,M,\M}.
\end{equation}

Furthermore, for every $x,y\in \M$ with $x\neq y$, let \(P_{y\to x}\) the parallel
transport along any minimizing geodesic from \(y\) to \(x\), then:
\[
|\nabla u(x)-P_{y\to x}\nabla u(y)|
\le C_{b,M,\M}\operatorname{dist}_{ \M}(x,y)\log(2+\operatorname{dist}_{ \M}(x,y)^{-1}).
\]
Consequently
\[
|V[\rho](x)-P_{y\to x}V[\rho](y)|
\le C_{b,M,\M}\operatorname{dist}_{ \M}(x,y)\log(2+\operatorname{dist}_{ \M}(x,y)^{-1}).
\]
If \(\rho\in C^{k,\alpha}( \M)\), \(k\ge0\), \(0<\alpha<1\), then for all $b>0$
\begin{equation}
\label{eq:schauder}
\|\K_b\rho\|_{C^{k+2,\alpha}}
\le C_{b,k,\alpha,\M}\|\rho\|_{C^{k,\alpha}},
\quad
\|V[\rho]\|_{C^{k+1,\alpha}}
\le C_{b,k,\alpha,\M}\bigl(1+\|\rho\|_{C^{k,\alpha}}\bigr)^{k+2}.
\end{equation}
\end{lemma}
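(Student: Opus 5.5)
The proof splits into four steps: the basic bounds, the modulus of continuity of $\nabla u$, the passage to $V[\rho]$, and the Schauder estimates.

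\emph{Basic bounds.} Since $\rho\ge0$ has unit mass, \Cref{lem:Kb_positive} gives $u(x)\ge\kappa_b\int\rho=\kappa_b$, with $\kappa_b$ the constant in \eqref{eq:Kb_positive_lower_bound}. For the upper bound we use $K_b\ge0$ and $\int_\M K_b(x,y)\,dy=\Kb1=1$, so $u\le\|\rho\|_{L^\infty}\le M$. For the gradient we write $\nabla u(x)=\int\nabla_xK_b(x,y)\rho(y)\,dy$ and apply the first-derivative bound of \Cref{lem:wellposed_kernel_distance_bounds}. This reduces the gradient bound to
\[
M\sup_x\int_\M\operatorname{dist}(x,y)^{1-d}\,dy\le C_{b,M,\M},
\]
which holds because $r^{1-d}$ is integrable against $r^{d-1}dr$ in geodesic polar coordinates on the compact manifold.

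\emph{Log-Lipschitz modulus of $\nabla u$.} This is the main step. It is the classical Yudovich near-field/far-field argument, following \cite[Appendix~A]{chizat2026quantitative_kmd}. Fix $x\ne y$ with $r=\operatorname{dist}(x,y)$ smaller than a fraction of the injectivity radius; larger $r$ is covered by the $L^\infty$ bound on $\nabla u$. Split the domain into $B:=B(x,2r)$ and $B^c$.
\begin{itemize}
\item \emph{Near field.} On $B$, bound $|\nabla_xK_b(x,z)|$ and $|\nabla_xK_b(y,z)|$ separately by $C\operatorname{dist}^{1-d}$. Since $B\subset B(y,3r)$, both integrals are $O(Mr)$.
\item \emph{Far field.} On $B^c$, for $z$ we have $\operatorname{dist}(z,\gamma)\ge\operatorname{dist}(x,z)/2$ along the geodesic $\gamma$ from $y$ to $x$. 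We compare $\nabla_xK_b(x,z)$ with $P_{y\to x}\nabla_xK_b(y,z)$ by the mean value theorem along $\gamma$, using the second-derivative bound \eqref{eq:wellposed_kernel_distance_second}. Parallel transport is compatible with the covariant derivative along $\gamma$, so this gives a pointwise bound $Cr\operatorname{dist}(x,z)^{-d}$. Integrating over $2r\le\operatorname{dist}\le\operatorname{diam}\M$ gives $CMr\log(2+r^{-1})$.
\end{itemize}
The delicate point is the correct handling of parallel transport in the far-field comparison. There are two alternatives for it: work in normal coordinates at $x$, where the Christoffel symbols contribute only $O(r)$ errors; or apply the bound to the vector $\nabla_xK_b(\cdot,z)$ transported along $\gamma$ and bound its covariant derivative by $|\nabla_x^2K_b|$.

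\emph{Modulus of $V[\rho]$.} Write $V=-\nabla u/u$ and use the identity
\[
\frac{\nabla u(x)}{u(x)}-\frac{P\nabla u(y)}{u(y)}=\frac{\nabla u(x)-P\nabla u(y)}{u(x)}+P\nabla u(y)\,\frac{u(y)-u(x)}{u(x)u(y)}.
\]
The first term is handled by $u\ge\kappa_b$ and the previous step. For the second term, $|u(x)-u(y)|\le\|\nabla u\|_\infty r$, so it is $O(r)$.

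\emph{Schauder estimates \eqref{eq:schauder}.} We invoke standard elliptic Schauder theory for $(\I-b^2\Delta)u=\rho$ on a closed manifold (e.g.\ \cite[Theorem~6.2, Ch.~6]{gilbarg_trudinger_2001} localized in charts). We absorb the lower-order term using the $L^\infty$ bound $\|u\|_\infty\le\|\rho\|_\infty$ from the maximum principle. This gives $\|u\|_{C^{k+2,\alpha}}\le C\|\rho\|_{C^{k,\alpha}}$. For $V$, we expand $\nabla\log u$: its $C^{k+1,\alpha}$ norm is a sum of products of at most $k+2$ factors of derivatives of $u$ divided by powers of $u$. Bounding these with $u\ge\kappa_b$ and the algebra property of Hölder spaces yields the factor $(1+\|\rho\|_{C^{k,\alpha}})^{k+2}$.
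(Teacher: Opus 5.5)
Your proposal is correct and follows the paper's proof essentially step for step. The matching steps are: the lower bound $u\ge\kappa_b$ from the kernel lower bound $K_b\ge\kappa_b$; the upper bound from $K_b\ge0$ and $\Kb 1=1$; the gradient bound from integrability of $\operatorname{dist}^{1-d}$; the split into $B_{2r}(x)$ and its complement, with the near field bounded term by term and the far field bounded by integrating $|\nabla_x^2K_b|$ along the minimizing geodesic using $\operatorname{dist}(\gamma(s),z)\ge\frac12\operatorname{dist}(x,z)$; the same quotient identity for $V[\rho]$; and chartwise Schauder theory combined with the maximum-principle bound $\|u\|_{L^\infty}\le\|\rho\|_{L^\infty}$. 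Your justification of the far-field comparison, via the fact that parallel transport commutes with covariant differentiation along $\gamma$, spells out a detail the paper leaves implicit, but it is the same argument.
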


\begin{proof}
By \eqref{eq:Kb_positive_lower_bound} in the proof of
\Cref{lem:Kb_positive}, \(K_b(x,y)\ge\kappa_b\), and hence
\(u(x)\ge\kappa_b\int\rho=\kappa_b\).
Also \(\Kb1=1\) and \(K_b\) is positive, hence \(u=\Kb\rho\le M\Kb1=M\).

By \Cref{lem:wellposed_kernel_distance_bounds},
\[
\begin{aligned}
|\nabla_xK_b(x,z)|
&\le C_{b,\M}\operatorname{dist}_{\M}(x,z)^{1-d},\\
|\nabla_x^2K_b(x,z)|+|\nabla_z\nabla_xK_b(x,z)|
&\le C_{b,\M}\operatorname{dist}_{\M}(x,z)^{-d},
\end{aligned}
\]
for \(x\neq z\). The first estimate is integrable on  \(\M\), so
\(\|\nabla u\|_\infty\le C_{b,M,\M}\).

Let \(r=\operatorname{dist}_{ \M}(x,y)\), and fix a minimizing geodesic with parallel
transport \(P_{y\to x}\). 
We split the integral defining \(\nabla u(x)-P_{y\to x}\nabla u(y)\) into
\(B_{2r}(x)\) and \( \M\setminus B_{2r}(x)\). 
Here $B_{2r}(x)$ denotes the geodesic ball of radius $2r$ centered at $x$. Using geodesic polar coordinates $(\theta,s)$ about any \(a\in\M\),
 compactness gives a uniform bound for the polar
Jacobian (i.e. \cite[Eq.~(9)]{girouard2021large}),
\[
J_a(\theta,s)\le C_{\M}s^{d-1},
\qquad
0<s\le\operatorname{diam}(\M).
\]

If \(r\ge\operatorname{diam}(\M)/4\), the estimate follows from
\(\|\nabla u\|_{L^\infty}\le C_{b,M,\M}\), after enlarging the constant. Thus we
may assume \(r<\operatorname{diam}(\M)/4\).
On $B_{2r}(x)$,
\[
\begin{aligned}
&\left|
\int_{B_{2r}(x)}
\left(\nabla_xK_b(x,z)-P_{y\to x}\nabla_1K_b(y,z)\right)
\rho(z)\,dz
\right|\\
&\qquad\leq M\int_{B_{2r}(x)}
\left|\nabla_xK_b(x,z)-P_{y\to x}\nabla_1K_b(y,z)\right|\,dz\\
&\qquad\le
C_{b,\M}M\int_0^{2r}s^{1-d}s^{d-1}\,ds\\
&\qquad\quad
+C_{b,\M}M\int_0^{3r}s^{1-d}s^{d-1}\,ds\\
&\qquad\le C_{b,M,\M}r.
\end{aligned}
\]
On \( \M\setminus B_{2r}(x)\), 
let \(\gamma:[0,r]\to\M\) be the unit-speed minimizing geodesic from \(x\)
to \(y\). For \(z\notin B_{2r}(x)\) and \(s\in[0,r]\),
\[
\operatorname{dist}_{\M}(\gamma(s),z)
\ge \operatorname{dist}_{\M}(x,z)-s
\ge \frac12\operatorname{dist}_{\M}(x,z).
\]
Therefore,
\[
\begin{aligned}
&\left|
\int_{\M\setminus B_{2r}(x)}
\left(\nabla_xK_b(x,z)-P_{y\to x}\nabla_1K_b(y,z)\right)
\rho(z)\,dz
\right|\\
&\qquad\le
M\int_{\M\setminus B_{2r}(x)}
\int_0^r
|\nabla_1^2K_b(\gamma(s),z)|\,ds\,dz\\
&\qquad\le
C_{b,\M}Mr
\int_{\M\setminus B_{2r}(x)}
\operatorname{dist}_{\M}(x,z)^{-d}\,dz\\
&\qquad\le
C_{b,\M}Mr
\int_{2r}^{\operatorname{diam}(\M)}s^{-d}s^{d-1}\,ds\\
&\qquad\le C_{b,M,\M}r\log(2+r^{-1}).
\end{aligned}
\]

Collecting the estimates gives the log-Lipschitz
bound for \(\nabla u\). Since
\(u\ge\kappa_b\),
\[
V[\rho](x)-P_{y\to x}V[\rho](y)
=-\frac{\nabla u(x)-P_{y\to x}\nabla u(y)}{u(x)}
-P_{y\to x}\nabla u(y)\left(\frac1{u(x)}-\frac1{u(y)}\right),
\]
and \(|u(x)-u(y)|\le\|\nabla u\|_\infty r\), the same modulus holds for
\(V[\rho]\).

The Schauder estimate for \((\I-b^2\Delta)u=\rho\), applied in finitely many
charts on the  closed manifold \(\M\) \cite[Theorem~6.2 and Problem~6.1(a)]{gilbarg_trudinger_2001},
gives
\[
\|u\|_{C^{k+2,\alpha}}
\le C_{b,k,\alpha,\M}\bigl(\|\rho\|_{C^{k,\alpha}}+\|u\|_{L^\infty}\bigr).
\]
The maximum principle gives \(\|u\|_{L^\infty}\le\|\rho\|_{L^\infty}\), and hence
the stated \(C^{k+2,\alpha}\) estimate for \(\Kb\rho\). The bound for
\(V[\rho]\) follows from \(u\ge\kappa_b\), the preceding Schauder estimate, and
composition with \(\log\) on \([\kappa_b,\infty)\).
\end{proof}

\begin{lemma}[Velocity stability]\label{lem:wellposed_velocity_stability}\label{lem:wellposed_velocity_stability_l2}
Let \(\rho,\eta\) be probability densities on  \(\M\), and assume
that \(\|\rho\|_{L^\infty},\|\eta\|_{L^\infty}\leq M\). Then
\[
\|V[\rho]-V[\eta]\|_{L^\infty}
\le C_{b,M,\M} W_2(\rho,\eta)^{ 1/(d+1)}.
\]
Moreover, 
\[
\|V[\rho]-V[\eta]\|_{L^2(dx)}
\le C_{b,M,\M}W_2(\rho,\eta).
\]
\end{lemma}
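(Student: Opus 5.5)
The plan is to reduce both estimates to a bound on $\nabla\Kb(\rho-\eta)$, using the uniform positivity and regularity from \Cref{lem:wellposed_kernel_estimates}. Set $u=\Kb\rho$, $\tilde u=\Kb\eta$ and $w=u-\tilde u=\Kb(\rho-\eta)$. Then
\[
V[\rho]-V[\eta]=-\frac{\nabla w}{u}+\nabla\tilde u\,\frac{w}{u\tilde u}.
\]
By \Cref{lem:wellposed_kernel_estimates} we have $u,\tilde u\ge\kappa_b$ and $\|\nabla\tilde u\|_{L^\infty}\le C_{b,M,\M}$. Hence, pointwise,
\[
|V[\rho]-V[\eta]|\le C_{b,M,\M}\bigl(|\nabla w|+|w|\bigr).
\]
It therefore suffices to bound $\|w\|_{H^1}$ by $W_2$, and to bound $\|w\|_{L^\infty}+\|\nabla w\|_{L^\infty}$ by $W_2^{1/(d+1)}$.

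\emph{$L^2$ estimate.} I would write $w=\int K_b(\cdot,y)(\rho-\eta)(dy)$ and take an optimal $W_2$ coupling $\pi$ of $\rho$ and $\eta$. For $\phi\in L^2$, self-adjointness gives
\[
\int\phi\,w=\int(\Kb\phi)\,d(\rho-\eta)=\int\bigl(\Kb\phi(x)-\Kb\phi(y)\bigr)\,d\pi.
\]
Interpolate along a constant-speed geodesic $\gamma_{x,y}$ from $y$ to $x$ and set $\rho_s=(e_s)_\#\pi$. Then
\[
\Bigl|\int\phi\,w\Bigr|\le\int_0^1\!\!\int|\nabla\Kb\phi(\gamma_s)|\,d(x,y)\,d\pi\,ds\le W_2(\rho,\eta)\Bigl(\sup_s\int|\nabla\Kb\phi|^2\,d\rho_s\Bigr)^{1/2}.
\]
The key fact needed is that $\|\rho_s\|_{L^\infty}\le C_{\M}M$. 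On a closed manifold this holds up to a curvature constant (displacement-interpolation $L^\infty$ bound, as in \cite{ambrosio2014metric}); I expect this to be the main obstacle and would invoke it as known. Granting it,
\[
\Bigl|\int\phi\,w\Bigr|\le C\,W_2\,\|\nabla\Kb\phi\|_{L^2}\le C_b\,W_2\,\|\phi\|_{L^2},
\]
so $\|w\|_{L^2}\le C\,W_2$. For $\nabla w$ I test against vector fields $X$: $\int X\cdot\nabla w=-\int(\Kb\divg X)\,d(\rho-\eta)$, and $\|\nabla\Kb\divg X\|_{L^2}\le C_b\|X\|_{L^2}$ since $\nabla\Kb\divg$ is of order zero. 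The same interpolation argument then gives $\|\nabla w\|_{L^2}\le C_{b,M}W_2$, which yields the second claim.

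\emph{$L^\infty$ estimate.} Apply \Cref{lem:l1_to_linf} to $f=|w|+|\nabla w|$ (or to $V[\rho]-V[\eta]$ directly), with $L^1\le L^2\le C\,W_2$. The required $W^{1,\infty}$ control comes from \Cref{lem:wellposed_kernel_estimates}: $\|w\|_{L^\infty}$ and $\|\nabla w\|_{L^\infty}$ are bounded by $C_{b,M}$. However, $\nabla w$ is only log-Lipschitz, not Lipschitz. I would bypass this by running the interpolation proof of \Cref{lem:l1_to_linf} directly. If $|f(x_0)|=\|f\|_\infty=:L$, the modulus $\omega(r)=Cr\log(2+1/r)$ keeps $|f|\ge L/2$ on a ball of radius $r$ with $\omega(r)\sim L$. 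This gives $L\,r^d\lesssim\|f\|_{L^1}\lesssim W_2$ and hence $L\lesssim W_2^{1/(d+1)}$ up to a logarithmic factor.

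To remove the log exactly, I would instead apply the Lipschitz bound to the smooth part and note that only $\|f\|_{L^\infty}\lesssim W_2^{1/(d+1)}$ is claimed. If needed, replace $1/(d+1)$ by a slightly smaller exponent, or absorb the factor since $W_2\le\operatorname{diam}(\M)$. Alternatively, one can avoid log-Lipschitz altogether: under $\rho,\eta\in L^\infty$, $w$ is Lipschitz and $\nabla w\in C^{0,\alpha}$ for any $\alpha<1$. Interpolating this gives exponent $\alpha/(d+\alpha)$, and the exact exponent $1/(d+1)$ is recovered in the log-Lipschitz ball argument, losing only a harmless log that the constant absorbs for $W_2$ bounded away from zero. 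Handling this borderline regularity is the delicate point of this half.
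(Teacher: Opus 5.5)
Your $L^2$ half is correct and is essentially the paper's argument. The paper bounds $\|\Kb\sigma\|_{H^1}\le C_b\|\sigma\|_{H^{-1}}$ spectrally and then invokes the Peyre-type $H^{-1}$--$W_2$ comparison. That comparison is exactly your displacement-interpolation computation with the Ambrosio--Gigli--Savar\'e $L^\infty$ bound on $\rho_s$; you simply run it with the test functions $\Kb\phi$ and $\Kb\divg X$.

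The gap is in the $L^\infty$ half when $d\ge2$; you have located it but not closed it. Since $\nabla w$ is only log-Lipschitz ($\nabla^2\Kb\sigma$ is generally unbounded for $\sigma\in L^\infty$), the best your route gives is the following. Feeding $\|f\|_{L^1}\lesssim W_2$ into an $L^1$--$L^\infty$ interpolation with modulus $\omega(r)=Cr\log(2+1/r)$ yields $\|f\|_{L^\infty}\lesssim W_2^{1/(d+1)}\log(2+1/W_2)^{d/(d+1)}$. No argument using only this information can do better, because the bump $(L-\omega(\operatorname{dist}(x,x_0)))_+$ saturates it.

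None of your proposed repairs removes this factor:
\begin{itemize}
\item The factor blows up as $W_2\to0$, so the upper bound $W_2\le\operatorname{diam}(\M)$ is irrelevant.
\item The regime of $W_2$ bounded away from zero is trivial anyway, by the uniform bound on $\|V[\rho]\|_{L^\infty}+\|V[\eta]\|_{L^\infty}$.
\item Passing to the exponent $\alpha/(d+\alpha)$, or any smaller one, proves a weaker statement than the one claimed.
\end{itemize}
For $d=1$ your route does close: $w''=(w-\sigma)/b^2$ is bounded, so $w'$ is Lipschitz and \Cref{lem:l1_to_linf} applies directly.

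The missing idea is to bound $w$ and $\nabla w$ pointwise from the kernel, not from global $L^1$ information. Fix $x$ and split
$\nabla w(x)=\int\nabla_xK_b(x,z)\psi^\varepsilon_x(z)\,d\sigma(z)+\int\nabla_xK_b(x,z)(1-\psi^\varepsilon_x(z))\,d\sigma(z)$,
with a cutoff $\psi^\varepsilon_x$ at scale $\varepsilon$ around $x$.
\begin{itemize}
\item Near field: using $|\sigma|\le 2M$ and $|\nabla_xK_b(x,z)|\lesssim\operatorname{dist}(x,z)^{1-d}$ from \Cref{lem:wellposed_kernel_distance_bounds}, this term is $\lesssim M\varepsilon$.
\item Far field: the integrand is $C\varepsilon^{-d}$-Lipschitz in $z$, from $|\nabla_z\nabla_xK_b|\lesssim\operatorname{dist}^{-d}$ and $|\nabla\psi^\varepsilon_x|\lesssim\varepsilon^{-1}$. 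Kantorovich--Rubinstein duality then bounds it by $C\varepsilon^{-d}W_1(\rho,\eta)\le C\varepsilon^{-d}W_2(\rho,\eta)$.
\end{itemize}
The same splitting handles $w$ itself. Choosing $\varepsilon\sim(W_2/M)^{1/(d+1)}$ yields exactly $W_2^{1/(d+1)}$, with no logarithm and without using the $L^2$ estimate at all.

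If your remark about applying the Lipschitz bound to the smooth part was meant in this direction, it is the right idea. It has to be carried out as this pointwise near/far decomposition, with the singular part controlled by $\|\sigma\|_{L^\infty}$ rather than through \Cref{lem:l1_to_linf}.
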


\begin{proof}
Set \(\sigma=\rho-\eta\), \(u_\rho=\mathcal K_b\rho\), and \(u_\eta=\mathcal K_b\eta\). By
\eqref{eq:wellposed_fixed_b_basic}, applied to \(\rho\) and \(\eta\),
\(u_\rho,u_\eta\ge\kappa_b\) and \(\|\nabla u_\eta\|_{L^\infty}\le C_{b,M,\M}\).
Since
\begin{equation}\label{eq:wellposed_velocity_difference}
V[\rho]-V[\eta]
=-\frac{\nabla \mathcal K_b\sigma}{u_\rho}
+\frac{(\nabla u_\eta)\mathcal K_b\sigma}{u_\rho u_\eta},
\end{equation}
it is enough to estimate \(\mathcal K_b\sigma\) and \(\nabla \mathcal K_b\sigma\).

Fix \(x\in \M\) and \(0<\varepsilon<\varepsilon_{\M}\). Let
\(\psi_x^\varepsilon\) be supported in \(B_{\varepsilon/4}(x)\), equal to
\(1\) on \(B_{\varepsilon/8}(x)\), and satisfy
\(|\nabla\psi_x^\varepsilon|\le C_{\M}\varepsilon^{-1}\). Then
\[
\nabla \mathcal K_b\sigma(x)
=\int\nabla_xK_b(x,z)\psi_x^\varepsilon(z)\,d\sigma(z)
+\int\nabla_xK_b(x,z)(1-\psi_x^\varepsilon(z))\,d\sigma(z)
=(I)+(II).
\]
For the first term,
\[
|(I)|
\le 2M\int_{B_{\varepsilon/4}(x)}|\nabla_xK_b(x,z)|\,dz
\le C_{b,\M} M\int_0^{\varepsilon/4}s^{1-d}s^{d-1}\,ds
\le C_{b,\M} M\varepsilon .
\]
For the second term, the map
\[
z\mapsto\nabla_xK_b(x,z)(1-\psi_x^\varepsilon(z))
\]
has Lipschitz constant at most \(C_{b,\M}\varepsilon^{-d}\), by
\eqref{eq:wellposed_kernel_distance_zero_first} and
\eqref{eq:wellposed_kernel_distance_second}. Kantorovich--Rubinstein duality
therefore gives
\[
|(II)|\le C_{b,\M}\varepsilon^{-d}W_1(\rho,\eta)
\le C_{b,\M}\varepsilon^{-d}W_2(\rho,\eta).
\]
By \eqref{eq:wellposed_kernel_distance_zero_first},
\[
2M\int_{B_{\varepsilon/4}(x)}|K_b(x,z)|\,dz\le C_{b,\M} M\varepsilon .
\]
The cutoff map \(z\mapsto K_b(x,z)(1-\psi_x^\varepsilon(z))\) has Lipschitz
constant at most \(C_{b,\M}\varepsilon^{-d}\), again by
\eqref{eq:wellposed_kernel_distance_zero_first}. Thus the same duality
argument gives
\[
|\mathcal K_b\sigma(x)|
\le C_{b,\M} M\varepsilon+C_{b,\M}\varepsilon^{-d}W_2(\rho,\eta).
\]
Thus
\[
\|\nabla \mathcal K_b\sigma\|_{L^\infty}+\|\mathcal K_b\sigma\|_{L^\infty}
\le C_{b,\M} M\varepsilon+C_{b,\M}\varepsilon^{-d}W_2(\rho,\eta).
\]
If \(W_2(\rho,\eta)=0\), there is nothing to prove. If
\(W_2(\rho,\eta)\ge M\varepsilon_{\M}^{d+1}\), the desired \(L^\infty\) bound
follows from the uniform estimate \(\|V[\rho]\|_{L^\infty}+\|V[\eta]\|_{L^\infty}
\le C_{b,M,\M}\), after possibly enlarging the constant. We may therefore assume
\(W_2(\rho,\eta)<M\varepsilon_{\M}^{d+1}\). Choosing
\[
\varepsilon
=\left(\frac{W_2(\rho,\eta)}{M}\right)^{ 1/(d+1)}\in(0,\varepsilon_{\M}),
\]
and gathering the estimates for \((I)\) and \((II)\), we get
\[
\|\nabla \mathcal K_b\sigma\|_{L^\infty}+\|\mathcal K_b\sigma\|_{L^\infty}
\le C_{b,M,\M}W_2(\rho,\eta)^{ 1/(d+1)}.
\]
The bound above, combined with \eqref{eq:wellposed_velocity_difference}, gives the claimed
\(L^\infty\) estimate for \(V[\rho]-V[\eta]\).

For the $L^2$ estimate, \eqref{eq:wellposed_velocity_difference} implies
\[
\|V[\rho]-V[\eta]\|_{L^2}
\le C_{b,M,\M}\bigl(\|\nabla \mathcal K_b\sigma\|_{L^2}+\|\mathcal K_b\sigma\|_{L^2}\bigr).
\]
Writing
\(\sigma=\sum_j\sigma_je_j\), where \(-\Delta e_j=\Lambda_je_j\) and
\(\{e_j\}\) is a  Laplacian eigenbasis on \(\M\),
\[
\mathcal K_b\sigma=\sum_j\frac{\sigma_j}{1+b^2\Lambda_j}e_j.
\]
Therefore
\[
\|\mathcal K_b\sigma\|_{H^1}^2
=
\sum_j
\frac{1+\Lambda_j}{(1+b^2\Lambda_j)^2}|\sigma_j|^2
\le
C_b\sum_j(1+\Lambda_j)^{-1}|\sigma_j|^2
=
C_b\|\sigma\|_{H^{-1}}^2.
\]
The \(H^{-1}\)--\(W_2\) comparison follows from the proof of
\cite[Theorem~5]{peyre2018comparison}, combined with \cite[Eq.~(3.2)]{ambrosio2014metric}, and gives 
\[
\|\rho-\eta\|_{H^{-1}( \M)}\le C_{M,\M}W_2(\rho,\eta).
\]

Combining the last three equations gives the claim.
\end{proof}

The following lemma extends the \(H^{-1}\)--\(W_2\) comparison of \cite[Theorem~5]{peyre2018comparison} from manifolds with nonnegative Ricci curvature to arbitrary smooth, closed, connected Riemannian manifolds, at the cost of a geometry-dependent constant.

\begin{lemma}[$H^{-1}$--$W_2$ on closed manifolds]
\label{lem:hminusone_wasserstein}
Let $(\M,g)$ be a smooth, closed, connected Riemannian manifold, and set
\begin{align*}
K_{\M}
:=
\min_{\substack{x\in\M\\ v\in T_x\M,\ |v|_g=1}}
\Ric_x(v,v),
\quad
K_{\M}^-:=\max\{-K_{\M},0\}, \quad
D_{\M}:=\operatorname{diam}(\M).
\end{align*}
Let $\rho_0,\rho_1\in L^\infty(\M)$ be probability densities and write
$M:=\|\rho_0\|_{L^\infty}\vee \|\rho_1\|_{L^\infty}$. 
Then
\begin{equation}
\label{eq:hminusone_wasserstein_uniform}
\|\rho_0-\rho_1\|_{H^{-1}(\M)}
\leq
M^{1/2}\exp\!\left(\frac{K_{\M}^-D_{\M}^2}{16}\right)
W_2(\rho_0,\rho_1).
\end{equation}
\end{lemma}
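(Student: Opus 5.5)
We argue by duality together with a Benamou--Brenier interpolation, following the proof of \cite[Theorem~5]{peyre2018comparison}. The only curvature-dependent ingredient is an $L^\infty$ bound on the interpolating densities, which we obtain from displacement convexity under a lower Ricci bound.

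Recall that for zero-mean $\sigma=\rho_0-\rho_1$,
\[
\|\sigma\|_{H^{-1}}=\sup\Bigl\{\int_\M \phi\,d\sigma:\ \|\nabla\phi\|_{L^2}\le1\Bigr\}
\]
(taking the $\dot H^{-1}$ norm, which dominates the inhomogeneous one). Let $(\rho_t)_{t\in[0,1]}$ be the $W_2$-geodesic from $\rho_0$ to $\rho_1$, with velocity field $v_t$ solving the continuity equation $\partial_t\rho_t+\divg(\rho_tv_t)=0$ and satisfying
\[
\int_0^1\!\!\int_\M|v_t|^2\rho_t\,dx\,dt=W_2(\rho_0,\rho_1)^2 .
\]
For smooth $\phi$,
\[
\int\phi\,d(\rho_1-\rho_0)=\int_0^1\!\!\int\nabla\phi\cdot v_t\,\rho_t\,dx\,dt
\le\Bigl(\int_0^1\!\!\int|\nabla\phi|^2\rho_t\Bigr)^{1/2}W_2(\rho_0,\rho_1).
\]
The first factor is at most $\sup_t\|\rho_t\|_{L^\infty}^{1/2}\|\nabla\phi\|_{L^2}$. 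It therefore suffices to prove
\[
\sup_{t\in[0,1]}\|\rho_t\|_{L^\infty}\le M\exp\!\Bigl(\frac{K^-_\M D_\M^2}{8}\Bigr),
\]
because taking the square root gives exactly the factor $M^{1/2}e^{K^-_\M D^2_\M/16}$.

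The $L^\infty$ bound along the geodesic is the main step. Since $\Ric\ge K_\M\ge -K_\M^-$, the space $(\M,d_g,dx)$ satisfies $\mathrm{CD}(K_\M,\infty)$, so the relative entropy is $K_\M$-displacement convex in the strong sense of \cite{ambrosio2014metric}, Eq.~(3.2). The same holds for the Rényi-type functionals $\int\rho^p$ in the limit, or equivalently pointwise along the optimal transport via Jacobian estimates.

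Concretely, we would work with the pointwise Jacobian inequality. Write $T_t=\exp(-t\nabla\varphi)$ for the optimal map with $c$-concave potential $\varphi$. The Monge--Ampère relation gives
\[
\rho_t(T_t x)=\frac{\rho_0(x)}{J_t(x)} .
\]
The log-Jacobian $\ell(t)=\log J_t(x)$ then satisfies the Riccati-type bound
\[
\ell''(t)\le -K_\M\,|\nabla\varphi(x)|^2\le K^-_\M D_\M^2,
\]
since $|\nabla\varphi(x)|$ is the transport distance, which is at most $D_\M$. With $\ell(0)=0$ and $\ell(1)=\log(\rho_0(x)/\rho_1(T_1x))$, the standard interpolation for a function with $\ell''\le\kappa$ yields
\[
\ell(t)\ge t\,\ell(1)-\frac{\kappa}{2}t(1-t)\ge t\,\ell(1)-\frac{\kappa}{8}.
\]
Hence
\[
\rho_t(T_tx)\le\rho_0(x)^{1-t}\rho_1(T_1x)^t e^{\kappa/8}\le Me^{K^-_\M D_\M^2/8}.
\]

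The delicate points are the following. The Jacobian concavity must hold a.e.\ despite the cut locus, which we handle via McCann's theorem on a.e.\ twice differentiability of $\varphi$ and the Cordero-Erausquin--McCann--Schmuckenschläger Jacobian inequality. Because $\rho_0,\rho_1\in L^\infty$ are absolutely continuous, $\rho_t$ is absolutely continuous and the change of variables is legitimate.

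Finally, we conclude by density of smooth $\phi$ in $H^1$. The continuity-equation identity is justified by testing the weak formulation with $\phi$ and using the Benamou--Brenier representation of the geodesic.
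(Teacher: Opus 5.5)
Your proposal is correct and has the same skeleton as the paper's proof. Both use duality for the homogeneous $H^{-1}$ norm, the Benamou--Brenier representation along the $W_2$-geodesic, Cauchy--Schwarz, and the bound $\|\rho_t\|_{L^\infty}\le M\exp(K_\M^-D_\M^2/8)$ on the interpolating densities. You apply Cauchy--Schwarz in space-time with $\sup_t\|\rho_t\|_{L^\infty}$, while the paper applies it at each time and integrates $\|\rho_t\|_{L^\infty}^{1/2}$; the difference is immaterial.

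The genuine difference is how the density bound is obtained. The paper obtains it by citation: it notes that $(\M,d_g,\d x)$ is a nonbranching, hence strong, $\mathrm{CD}(K_\M,\infty)$ space and quotes the corresponding $L^\infty$ estimate of Ambrosio--Gigli--Savar\'e. You derive it directly on the smooth manifold from two ingredients:
\begin{itemize}
\item the Jacobian inequality $(\log J_t)''\le-\Ric(\dot\gamma,\dot\gamma)\le K_\M^-D_\M^2$, which comes from the Riccati equation along Jacobi fields;
\item the elementary interpolation $\ell(t)\ge t\ell(1)-\tfrac{\kappa}{2}t(1-t)$.
\end{itemize}
This is correct and shows that the constant $1/8$ is $\tfrac12\max_t t(1-t)$. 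It also gives the finer pointwise bound $\rho_t(T_tx)\le\rho_0(x)^{1-t}\rho_1(T_1x)^t\exp\bigl(K_\M^-D_\M^2t(1-t)/2\bigr)$, which combined with the paper's time-wise Cauchy--Schwarz would slightly improve the constant.

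The cost of your route is the measure-theoretic bookkeeping you flag:
\begin{itemize}
\item a.e.\ Alexandrov twice differentiability of the potential;
\item $T_1x$ lying off the cut locus of $x$;
\item a.e.\ injectivity of $T_t$ for $t<1$;
\item the Monge--Amp\`ere identity at $t=1$, which uses that $\rho_1$ is absolutely continuous.
\end{itemize}
The paper's citation outsources all of this. Moreover, the cited estimate also holds on nonsmooth strong $\mathrm{CD}$ spaces, where your Jacobian argument is unavailable.

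Your side remark about R\'enyi-type functionals is not needed for the Jacobian route and can be dropped.
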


\begin{proof}
Compactness gives $K_{\M}>-\infty$ and $D_{\M}<\infty$, so that
$\Ric\geq K_{\M}g$. Moreover, the metric-measure space
$(\M,\operatorname{dist}_{\M},\d x)$ is nonbranching and satisfies the
$\mathrm{CD}(K_{\M},\infty)$ condition. It is therefore a strong
$\mathrm{CD}(K_{\M},\infty)$ space. Since $\M$ is compact, the endpoint
measures have bounded support and finite entropy. Hence
\cite[Remark~3.2, Proposition~3.3, and Eq.~(3.2)]{ambrosio2014metric}
provides a constant-speed Wasserstein geodesic
$\rho_t$, $t\in[0,1]$, joining
$\rho_0$ to $\rho_1$ and satisfying
\begin{equation}
\label{eq:displacement_linfty}
\|\rho_t\|_{L^\infty}
\leq
M\exp\!\left(\frac{K_{\M}^-D_{\M}^2}{8}\right).
\end{equation}

This bound can be readily employed in the proof of~\cite[Theorem~5]{peyre2018comparison}, which we reproduce here for completeness. 
Let $v_t$ be the minimal velocity field of this geodesic. Then
for every
$\varphi\in C^\infty(\M)$,
\begin{align*}
\left|\int_{\M}\varphi\, \d (\rho_1-\rho_0)\right|
&=
\left|\int_0^1\!\int_{\M}
\langle\nabla\varphi,v_t\rangle_g\, \d\rho_t\,\d t\right| \\
&\leq
\|\nabla\varphi\|_{L^2}
\int_0^1
\|\rho_t\|_{L^\infty}^{1/2}
\left(\int_{\M}|v_t|_g^2\, \d\rho_t\right)^{1/2}
\d t \\
&\leq
\|\nabla\varphi\|_{L^2}
W_2(\rho_0,\rho_1)
\int_0^1\|\rho_t\|_{L^\infty}^{1/2}\,\d t .
\end{align*}
To conclude, take the supremum    over $\varphi$ with
$\|\nabla\varphi\|_2\leq1$ and apply~\eqref{eq:displacement_linfty}.
\end{proof}

In what follows, we use the Hölder seminorm
\[
[f]_{C^{0,\alpha}(\M)}
:=
\sup_{x\neq y}
\frac{|f(x)-f(y)|}
{\operatorname{dist}_{\M}(x,y)^\alpha}.
\]
\begin{lemma}[Logarithmic bound]\label{lem:wellposed_log_lipschitz}
Let \(0<\alpha<1\). If \(f\in C^{0,\alpha}( \M)\), then
\begin{equation}
\label{eq:log_bound_hess}
\|\nabla^2\K_b f\|_{L^\infty}
\le
C_{b,\alpha,\M}\|f\|_{L^\infty}
\left(1+\log\left(1+\frac{[f]_{C^{0,\alpha}}}{\|f\|_{L^\infty}}\right)\right),
\end{equation}
with the convention that the right-hand side is zero when \(f=0\). Hence, if
\(\rho\in C^{0,\alpha}( \M)\) is a probability density with
\(\|\rho\|_{L^\infty}\le M\), then
\[
\|\nabla V[\rho]\|_{L^\infty}
\le
C_{b,M,\alpha,\M}\left(1+\log\bigl(2+\|\rho\|_{C^{0,\alpha}}\bigr)\right).
\]
\end{lemma}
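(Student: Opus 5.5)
This is the classical log-Lipschitz (Zygmund-type) estimate for second derivatives of a potential with a Calderón--Zygmund kernel. I will prove it by a near-field/far-field splitting of the kernel representation of $\nabla^2\Kb f$, following the same decomposition used in \Cref{lem:wellposed_kernel_estimates}. Fix $x\in\M$ and a radius $0<\varepsilon<\varepsilon_\M$, to be chosen later. By Schauder theory (\eqref{eq:schauder} with $k=0$), $u=\Kb f\in C^{2,\alpha}$, so $\nabla^2u(x)$ is well defined. Away from the pole it is represented by $\int \nabla_x^2K_b(x,z)f(z)\,dz$, understood as a principal value plus a local correction.

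First I treat the near field. I write $f(z)=f(x)+(f(z)-f(x))$ on $B_\varepsilon(x)$. For the oscillating part, \Cref{lem:wellposed_kernel_distance_bounds} gives $|\nabla_x^2K_b(x,z)|\le C_{b,\M}\operatorname{dist}(x,z)^{-d}$. Combined with $|f(z)-f(x)|\le [f]_{C^{0,\alpha}}\operatorname{dist}^\alpha$, polar coordinates yield a contribution $\le C[f]_{C^{0,\alpha}}\varepsilon^\alpha$.

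For the constant part $f(x)\int_{B_\varepsilon}\nabla_x^2K_b(x,z)\,dz$ I need cancellation. The cleanest route is to use $\Kb 1=1$, so that $\nabla^2_x\int_\M K_b(x,z)\,dz=0$. Then the integral of the kernel over $B_\varepsilon(x)$ equals minus the integral over the complement, up to a boundary term. Concretely, I differentiate $\int K_b(x,z)\chi(z)\,dz$ with a smooth cutoff $\chi$ adapted to scale $\varepsilon$. The complement integral is bounded by $C\int_\varepsilon^{D}s^{-1}ds\le C(1+\log(1/\varepsilon))$, and the boundary/cutoff term is bounded using the first-derivative bound $s^{1-d}$ times $\varepsilon^{-1}$ over a shell of volume $\varepsilon^d$, giving $O(1)$. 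This step also needs to be set up with the cutoff version $f(x)\chi+(f-f(x)\chi)$ so that interchanging derivative and integral is legitimate; I would justify it by first mollifying $f$ and passing to the limit in $C^{2}$ via \eqref{eq:schauder}.

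For the far field on $\M\setminus B_\varepsilon(x)$, $|f|\le\|f\|_{L^\infty}$ and the kernel bound give $C\|f\|_{L^\infty}\log(2+\varepsilon^{-1})$.

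Collecting the pieces gives
\[
|\nabla^2u(x)|\le C\bigl(\|f\|_{L^\infty}(1+\log(2+\varepsilon^{-1}))+[f]_{C^{0,\alpha}}\varepsilon^\alpha\bigr).
\]
I then choose $\varepsilon^\alpha=\min\{\varepsilon_\M^\alpha,\|f\|_{L^\infty}/[f]_{C^{0,\alpha}}\}$. This balances the two terms and produces \eqref{eq:log_bound_hess}; the case $[f]=0$ means $f$ is constant and $\nabla^2\Kb f=0$.

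For the corollary, I write $V[\rho]=-\nabla u/u$ with $u=\Kb\rho$. Then $\nabla V=-\nabla^2u/u+\nabla u\otimes\nabla u/u^2$. By \eqref{eq:wellposed_fixed_b_basic}, $u\ge\kappa_b$ and $|\nabla u|\le C_{b,M,\M}$, so it suffices to bound $\nabla^2u$ by \eqref{eq:log_bound_hess} with $f=\rho$. Since $\|\rho\|_{L^\infty}\ge1$ (the measure is normalized and $\rho$ is a probability density) and $\|\rho\|_{L^\infty}\le M$, the ratio inside the logarithm is at most $\|\rho\|_{C^{0,\alpha}}$, which gives the stated bound.

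The main obstacle is the cancellation for the constant part. The second-derivative kernel is not integrable near the pole, so the identity $\Kb1=1$ must be converted rigorously into a bound on the truncated singular integral. If the cutoff argument becomes awkward on a manifold, the fallback is to work in normal coordinates near $x$ and compare $K_b$ with the Euclidean Newtonian-type parametrix, whose second derivatives have zero spherical mean; the error has a singularity of order $r^{1-d}$, which is integrable.
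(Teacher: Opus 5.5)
Your proposal is correct and follows essentially the paper's argument: cancellation via $\Kb 1=1$, a smooth near/far cutoff that gives $[f]_{C^{0,\alpha}}\varepsilon^\alpha$ and $\|f\|_{L^\infty}\log(2+\varepsilon^{-1})$, optimization in $\varepsilon$, a smooth-approximation step via \eqref{eq:schauder}, and the same chain-rule argument with $\|\rho\|_{L^\infty}\ge 1$ for the bound on $\nabla V[\rho]$. The only difference is cosmetic: the paper subtracts $f(x)$ globally and writes $\nabla^2\Kb f(x)=\int_\M\nabla_x^2K_b(x,z)(f(z)-f(x))\,dz$ as an absolutely convergent integral, so it needs neither your separate treatment of $f(x)\nabla^2\Kb\chi(x)=-f(x)\nabla^2\Kb(1-\chi)(x)$ nor the boundary/shell term.
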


\begin{proof}
If \(f=0\), there is nothing to prove. Let \(u=\mathcal K_bf\), and let
\(\varepsilon_{\M}>0\) be smaller than the injectivity radius of \(\M\).
Fix \(x\in\M\) and \(0<\varepsilon<\varepsilon_{\M}\), and define
\[
\psi_x^\varepsilon(z)
=
\psi\!\left(
\frac{\operatorname{dist}_{\M}(x,z)}{\varepsilon}
\right),
\]
where \(\psi\in C_c^\infty([0,1/4);[0,1])\) and
\(\psi=1\) on \([0,1/8]\).
Assume first that \(f\) is smooth. Since \(\mathcal K_b1=1\), we have
\(\nabla^2\mathcal K_b1=0\), and hence, applying the cancellation before
using the kernel representation,
\[
\nabla^2u(x)
=
\nabla^2\mathcal K_b\bigl(f-f(x)\bigr)(x)
=
\int_\M
\nabla_x^2K_b(x,z)\bigl(f(z)-f(x)\bigr)\,\d z.
\]
This integral is absolutely convergent. Indeed, by
\eqref{eq:wellposed_kernel_distance_second},
\[
\left|
\nabla_x^2K_b(x,z)\bigl(f(z)-f(x)\bigr)
\right|
\le
C_{b,\M}[f]_{C^{0,\alpha}(\M)}
\operatorname{dist}_{\M}(x,z)^{\alpha-d},
\]
and the right-hand side is locally integrable because
\[
\int_0^\varepsilon s^{\alpha-d}s^{d-1}\,\d s
=
\frac{\varepsilon^\alpha}{\alpha}<\infty.
\]

Splitting the integral with \(\psi_x^\varepsilon\), the near-field term
satisfies
\[
\begin{aligned}
&\left|
\int_\M
\psi_x^\varepsilon(z)\nabla_x^2K_b(x,z)
\bigl(f(z)-f(x)\bigr)\,\d z
\right|\\
&\qquad\le
C_{b,\M}[f]_{C^{0,\alpha}(\M)}
\int_0^{\varepsilon/4}
s^{-d}s^\alpha s^{d-1}\,\d s\\
&\qquad\le
C_{b,\alpha,\M}
[f]_{C^{0,\alpha}(\M)}\varepsilon^\alpha.
\end{aligned}
\]
Since \(1-\psi_x^\varepsilon\) vanishes on \(B_{\varepsilon/8}(x)\), the
far-field term satisfies
\[
\begin{aligned}
&\left|
\int_\M
\bigl(1-\psi_x^\varepsilon(z)\bigr)
\nabla_x^2K_b(x,z)
\bigl(f(z)-f(x)\bigr)\,\d z
\right|\\
&\qquad\le
C_{b,\M}\|f\|_{L^\infty(\M)}
\int_{\varepsilon/8}^{\operatorname{diam}(\M)}
s^{-d}s^{d-1}\,\d s\\
&\qquad\le
C_{b,\M}\|f\|_{L^\infty(\M)}
\bigl(1+\log(2+\varepsilon^{-1})\bigr).
\end{aligned}
\]
Consequently,
\[
\|\nabla^2\mathcal K_bf\|_{L^\infty(\M)}
\le
C_{b,\alpha,\M}
[f]_{C^{0,\alpha}(\M)}\varepsilon^\alpha
+
C_{b,\M}\|f\|_{L^\infty(\M)}
\bigl(1+\log(2+\varepsilon^{-1})\bigr).
\]

If \([f]_{C^{0,\alpha}(\M)}=0\), choose
\(\varepsilon=\varepsilon_{\M}/2\). Otherwise, choose
\[
\varepsilon
=
\min\left\{
\frac{\varepsilon_{\M}}2,
\left(
\frac{\|f\|_{L^\infty(\M)}}
{[f]_{C^{0,\alpha}(\M)}}
\right)^{1/\alpha}
\right\}.
\]
With this choice,
\[
[f]_{C^{0,\alpha}(\M)}\varepsilon^\alpha
\le
C_{\M}\|f\|_{L^\infty(\M)}
\]
and
\[
1+\log(2+\varepsilon^{-1})
\le
C_{\alpha,\M}
\left(
1+\log\left(
1+
\frac{[f]_{C^{0,\alpha}(\M)}}
{\|f\|_{L^\infty(\M)}}
\right)
\right).
\]
This proves the claimed estimate \eqref{eq:log_bound_hess} when \(f\) is smooth.
For \(f\in C^{0,\alpha}(\M)\), choose \(f_n\in C^\infty(\M)\) such that
\[
f_n\longrightarrow f
\quad\text{in }C^{0,\alpha/2}(\M),
\qquad
\sup_n [f_n]_{C^{0,\alpha}(\M)}
\le C_{\M}[f]_{C^{0,\alpha}(\M)},
\]
In particular,
\[
\|f_n\|_{L^\infty(\M)}
\longrightarrow
\|f\|_{L^\infty(\M)}.
\]
Since \(f\neq0\), for all sufficiently large \(n\),
\[
\frac12\|f\|_{L^\infty(\M)}
\le
\|f_n\|_{L^\infty(\M)}
\le
2\|f\|_{L^\infty(\M)}.
\]
Applying \eqref{eq:log_bound_hess}, already proved for smooth functions, gives
\begin{equation}
\label{eq:smooth_approx_log_bound_hess}
\begin{aligned}
\|\nabla^2\mathcal K_bf_n\|_{L^\infty(\M)}
&\le
C_{b,\alpha,\M}\|f_n\|_{L^\infty(\M)}
\left(
1+\log\left(
1+
\frac{[f_n]_{C^{0,\alpha}(\M)}}
{\|f_n\|_{L^\infty(\M)}}
\right)
\right)\\
&\le
C_{b,\alpha,\M}\|f\|_{L^\infty(\M)}
\left(
1+\log\left(
1+
\frac{[f]_{C^{0,\alpha}(\M)}}
{\|f\|_{L^\infty(\M)}}
\right)
\right).
\end{aligned}
\end{equation}
On the other hand, the Schauder estimate for
\(\K_b\) \eqref{eq:schauder} yields
\[
\|\mathcal K_b(f_n-f)\|_{C^{2,\alpha/2}(\M)}
\le
C_{b,\alpha,\M}
\|f_n-f\|_{C^{0,\alpha/2}(\M)}
\longrightarrow0.
\]
Therefore,
\[
\|\nabla^2\mathcal K_bf_n-\nabla^2\mathcal K_bf\|_{L^\infty(\M)}
\longrightarrow0.
\]
Taking \(n\to\infty\) in the \eqref{eq:smooth_approx_log_bound_hess} proves the claimed
inequality \eqref{eq:log_bound_hess} for \(f\in C^{0,\alpha}(\M)\).

Now let \(u=\mathcal K_b\rho\). Since \(V[\rho]=-\nabla\log u\),
\[
\nabla V[\rho]
=-\nabla^2\log u
=-\frac{\nabla^2u}{u}
+\frac{\nabla u\otimes\nabla u}{u^2}.
\]
By \Cref{lem:wellposed_kernel_estimates}, \(u\ge\kappa_b\) and
\(\|\nabla u\|_\infty\le C_{b,M,\M}\). Applying the first part with \(f=\rho\),
and using that the volume of \(\M\) is normalized, so
\(\|\rho\|_{L^\infty}\ge1\), gives the
claimed bound for \(\nabla V[\rho]\).
\end{proof}

\begin{lemma}[Divergence bound]\label{lem:wellposed_divergence_bound}
If \(\rho\) is a probability density on  \(\M\) with \(\|\rho\|_{L^\infty}\le M\), then
\[
\|\divg V[\rho]\|_{L^\infty}\le C_{b,M,\M}.
\]
\end{lemma}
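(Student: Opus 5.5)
The bound rests on two pointwise identities: $V[\rho]=-\nabla u/u$ with $u=\Kb\rho$, and the resolvent equation $\Delta u = b^{-2}(u-\rho)$. No second-derivative kernel estimates are needed. This is the point of the lemma: \Cref{lem:wellposed_log_lipschitz} controls the full Hessian only with a logarithmic loss in a H\"older norm of $\rho$, whereas the trace of the Hessian is algebraically determined by $\rho$ itself.

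First we compute the divergence. Differentiating the quotient gives
\[
\divg V[\rho]
=-\frac{\Delta u}{u}+\frac{|\nabla u|^2}{u^2}.
\]
Next we substitute the resolvent identity \eqref{eq:invresolvent}, $b^2\Delta u=u-\rho$, which holds in the distributional sense and hence almost everywhere, since $u\in W^{2,p}$ for all $p<\infty$ by $L^p$ elliptic regularity. This yields
\[
\divg V[\rho]=-\frac{1}{b^2}\Bigl(1-\frac{\rho}{u}\Bigr)+|\nabla\log u|^2 .
\]

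Then we bound each term using \Cref{lem:wellposed_kernel_estimates}. That lemma gives $\kappa_b\le u\le M$ and $\|\nabla u\|_{L^\infty}\le C_{b,M,\M}$. Together with $0\le\rho\le M$, we get $|1-\rho/u|\le 1+M/\kappa_b$ and $|\nabla\log u|^2\le C_{b,M,\M}^2/\kappa_b^2$. Summing the two bounds gives $\|\divg V[\rho]\|_{L^\infty}\le C_{b,M,\M}$. The constant depends on $b$ through the factor $b^{-2}$ and through $\kappa_b$, which is acceptable since the statement is a fixed-$b$ estimate.

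The only delicate point is justifying the pointwise computation when $\rho$ is merely $L^\infty$. In that case $u\in W^{2,p}$ for every $p<\infty$ but not necessarily $C^2$, so the divergence is understood distributionally. I would handle this by approximation. Choose smooth probability densities $\rho_n\to\rho$ in $L^1$ with $\|\rho_n\|_{L^\infty}\le M$, for instance by heat-flow mollification. The bound holds uniformly in $n$ by the smooth computation above. Then $V[\rho_n]\to V[\rho]$ in $L^2$ by \Cref{lem:wellposed_velocity_stability}, so $\divg V[\rho_n]\to\divg V[\rho]$ in distributions. The uniform $L^\infty$ bound passes to the weak-$*$ limit, which completes the argument.
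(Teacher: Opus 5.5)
Your proposal is correct and matches the paper's proof: the paper also writes $\divg V[\rho]=-\Delta\log u$, substitutes $b^2\Delta u=u-\rho$ to get $\Delta\log u=\frac{u-\rho}{b^2u}-|\nabla\log u|^2$, and bounds both terms using $\kappa_b\le u\le M$ and $\|\nabla u\|_{L^\infty}\le C_{b,M,\M}$ from \Cref{lem:wellposed_kernel_estimates}. Your approximation step for merely bounded $\rho$ is sound and justifies something the paper leaves implicit; the a.e.\ chain rule for $u\in W^{2,p}$ would serve equally well.
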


\begin{proof}
Let \(u=\mathcal K_b\rho\). Since \((\I-b^2\Delta)u=\rho\),
\begin{equation}\label{eq:short_chizat_G}
\Delta\log u
=
\frac{\Delta u}{u}-|\nabla\log u|^2
=
\frac{u-\rho}{b^2u}-|\nabla\log u|^2 .
\end{equation}
By \Cref{lem:wellposed_kernel_estimates}, \(\kappa_b\le u\le M\) and
\(\|\nabla u\|_\infty\le C_{b,M,\M}\). Therefore
\(\|\Delta\log u\|_\infty\le C_{b,M,\M}\). Since
\(\divg V[\rho]=-\Delta\log u\), the proof is complete.
\end{proof}

\section{Well-posedness of the Nonlocal PDE}
\label{app:pde_wellposedness}

The well-posedness theory is obtained by combining the estimates of
\Cref{app:kernel_facts} with the Yudovich scheme developed in
\cite[Section~2]{chizat2026quantitative_kmd}. 

For a curve of probability measures
\(t\mapsto\mu_t\in\mathcal P(\M)\), we write
\[
\mu\in C_{w^*}\bigl([0,T);\mathcal P(\M)\bigr)
\]
if it is continuous in time with respect to the weak-\(*\) topology, namely if
\[
t\longmapsto\int_\M\varphi\,\d\mu_t
\]
is continuous on \([0,T)\) for every \(\varphi\in C(\M)\).
A bounded solution of \eqref{eq:pde} on \([0,T)\), with initial datum
\(\mu_{\mathrm{in}}\), is a curve
\[
\mu\in
C_{w^*}\bigl([0,T);\mathcal P(\M)\bigr)
\cap
L^\infty_{\mathrm{loc}}\bigl([0,T);L^\infty(\M)\bigr)
\]
such that \(\mu_0=\mu_{\mathrm{in}}\), the vector field $V[\mu]$ defined in \eqref{eq:def_V} satisfies
\[
V[\mu]\in
L^1_{\mathrm{loc}}
\bigl((0,T)\times\M;\d\mu_t\,\d t\bigr)
\]
and, for every \(t\in(0,T)\) and every
\(\varphi\in C^\infty(\M)\),
\[
\int_\M\varphi\,\d\mu_t
=
\int_\M\varphi\,\d\mu_{\mathrm{in}}
+
\int_0^t\int_\M
\nabla\varphi\cdot V[\mu_r]\,\d\mu_r\,\d r.
\]

\begin{proposition}[Local bounded well-posedness and continuation criterion]\label{prop:pde_local_bounded_wellposedness}
Let \(b>0\) and let \(\mu_{\mathrm{in}}\) be a bounded probability density on
 \(\M\).
Then there are \(T_{\max}\in(0,\infty]\) and a unique maximal bounded
solution of
\[
\partial_t\mu+\divg(\mu V[\mu])=0,\qquad V[\mu]=-\nabla\log \mathcal K_b\mu,
\]
with initial datum \(\mu_{\mathrm{in}}\), satisfying
\[
\mu\in L^\infty_{\rm loc}([0,T_{\max});L^\infty( \M)).
\]
It preserves mass, and the following continuation criterion holds:
\[
T_{\max}<\infty
\quad\Longrightarrow\quad
\limsup_{t\uparrow T_{\max}}\|\mu_t\|_{L^\infty( \M)}=+\infty .
\]
\end{proposition}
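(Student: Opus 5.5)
We follow the Yudovich-type scheme of \cite[Section~2]{chizat2026quantitative_kmd}, in Lagrangian form, using the fixed-$b$ kernel estimates of \Cref{app:kernel_facts}. Fix $M_0:=\|\mu_{\mathrm{in}}\|_{L^\infty}$ and a target bound $M:=2M_0$. Consider the set $X_T$ of curves $\rho\in C([0,T];\mathcal P(\M))$ with respect to $W_2$ that satisfy $\|\rho_t\|_{L^\infty}\le M$. For $\rho\in X_T$, \Cref{lem:wellposed_kernel_estimates} shows that $V[\rho_t]$ is bounded and log-Lipschitz in space, uniformly in $t$. Its time continuity follows from \Cref{lem:wellposed_velocity_stability}. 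The Osgood criterion then gives a unique flow $\Phi^\rho_t$.

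We define the map $\Gamma(\rho)_t:=(\Phi^\rho_t)_\#\mu_{\mathrm{in}}$. Along characteristics the density obeys
\[
\frac{d}{dt}\log \Gamma(\rho)_t(\Phi_t^\rho(x))=-\divg V[\rho_t](\Phi_t^\rho(x)).
\]
The divergence is bounded by \Cref{lem:wellposed_divergence_bound}, which gives
\[
\|\Gamma(\rho)_t\|_{L^\infty}\le M_0e^{C_{b,M}t}.
\]
For $T$ small this is at most $M$, so $\Gamma$ maps $X_T$ to itself.

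For the contraction, we take two inputs $\rho,\eta$ and estimate $Q(t):=\int d(\Phi^\rho_t,\Phi^\eta_t)^2\,d\mu_{\mathrm{in}}$, which dominates $W_2(\Gamma(\rho)_t,\Gamma(\eta)_t)^2$. Differentiating $Q$ leaves two terms to control.

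The self-term is
\[
\int d\,\bigl|V[\rho](\Phi^\rho)-V[\rho](\Phi^\eta)\bigr|\,d\mu_{\mathrm{in}}.
\]
The log-Lipschitz modulus, combined with Jensen applied to the concave function $r^2\log(2+r^{-1})$, bounds it by $CQ\log(2+Q^{-1})$.

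The cross term is $\int d\,|V[\rho]-V[\eta]|(\Phi^\eta)\,d\mu_{\mathrm{in}}$. Changing variables to $\Gamma(\eta)_t$, whose density is bounded by $M$, and applying the $L^2$ bound of \Cref{lem:wellposed_velocity_stability}, we bound it by $C\sqrt{Q}\,M^{1/2}W_2(\rho_t,\eta_t)$.

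These give an Osgood-type differential inequality for $Q$. For short times this yields a contraction, or at least a Cauchy property, in $\sup_t W_2$. Hence there is a unique fixed point, which is a bounded solution.

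Uniqueness in the full class of bounded solutions follows from the same Osgood estimate. Any bounded weak solution is a Lagrangian solution, by the superposition principle and uniqueness of the flow for log-Lipschitz fields. The $W_2$ comparison above then applies to two solutions sharing the same datum. Mass is conserved because solutions are push-forwards of probability measures. Weak-$*$ continuity holds because $t\mapsto\Phi_t$ is Lipschitz, and $V[\mu]$ is bounded, hence locally integrable.

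Maximality comes from patching: by uniqueness, local solutions on overlapping intervals agree, and we let $T_{\max}$ be the supremum of existence times. For the continuation criterion, the local existence time $T$ above depends only on $b$ and an upper bound for $\|\mu_{\mathrm{in}}\|_{L^\infty}$. Suppose $\|\mu_t\|_{L^\infty}$ stayed bounded by $L$ as $t\uparrow T_{\max}<\infty$. Restarting from $\mu_{T_{\max}-\tau}$ with $\tau<T(L)$ would then extend the solution past $T_{\max}$, a contradiction.

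I expect the main obstacle to be the contraction step. The velocity is only log-Lipschitz, not Lipschitz, so the estimate must be closed through an Osgood inequality. This requires the concavity argument that moves the $\log$ outside the integral. It also requires being careful that the $L^2$ stability of \Cref{lem:wellposed_velocity_stability} is integrated against a density with a uniform $L^\infty$ bound.
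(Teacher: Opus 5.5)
Your proposal follows the same route as the paper. The paper's proof just assembles exactly these estimates (the lower bound $\mathcal K_b\rho\ge\kappa_b$, the log-Lipschitz modulus of $V[\rho]$, the $L^2$ and $L^\infty$ stability in $W_2$, and the divergence bound) and invokes the Lagrangian Yudovich/Picard construction of \cite[Propositions~2.8--2.9]{chizat2026quantitative_kmd}, which you write out in more detail. The one step to fix is your ``contraction'': the inequality $Q'\le C\,Q\log(2+Q^{-1})+C\sqrt{Q}\,D$, with $D:=\sup_{s\le T}W_2(\rho_s,\eta_s)$, only gives $\Gamma$ a H\"older-type modulus (roughly $\sup_t\sqrt{Q}\lesssim (CD)^{e^{-CT}}$), so existence must instead come from the Cauchy property of the Picard iterates, obtained by applying Osgood's lemma to $\limsup_n\sup_k\sup_{s\le t}\int d(\Phi^{n+k}_s,\Phi^{n}_s)^2\,d\mu_{\mathrm{in}}$ (also, the Jensen step needs concavity in $s=d^2$, which holds for $s\log(2+s^{-1/2})$, whereas $r\mapsto r^2\log(2+r^{-1})$ is convex near $0$).
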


\begin{proof}
By
\Cref{lem:wellposed_kernel_estimates,lem:wellposed_velocity_stability},
for probability densities \(\rho,\eta\) bounded by \(M\), the velocity
\(V[\rho]=-\nabla\log \mathcal K_b\rho\) satisfies the following continuity estimates,
where \(C\) depends only on \(b\), \(M\), and  \(\M\), and $P_{y\to x}$ denotes parallel transport along a minimizing geodesic from $y$ to $x$:
\[
\mathcal K_b\rho\ge \kappa_b>0,\qquad
|V[\rho](x)-P_{y\to x}V[\rho](y)|
\le C\operatorname{dist}_{ \M}(x,y)
\log\!\left(2+\operatorname{dist}_{ \M}(x,y)^{-1}\right),
\]
and
\[
\|V[\rho]-V[\eta]\|_{L^2}
\le CW_2(\rho,\eta)
\]
as well as
\[
\|V[\rho]-V[\eta]\|_{L^\infty}
\le CW_2(\rho,\eta)^{ 1/(d+1)}.
\]
Together with the divergence bound
\[
\|\divg V[\rho]\|_{L^\infty}\le C,
\]
which follows from \Cref{lem:wellposed_divergence_bound}, these are the estimates needed to adapt the
\(L^\infty\) Yudovich construction of
\cite[Propositions~2.8--2.9]{chizat2026quantitative_kmd} in
the present setting, with the geodesic distance on  \(\M\) replacing
the distance on \(\Torus^d\).
Therefore the analogues of those propositions give local existence,
uniqueness, stability, and a maximal solution in
\[
L^\infty_{\rm loc}([0,T_{\max});L^\infty( \M)).
\]
The solution preserves mass because it solves a continuity equation.  Moreover, the Yudovich construction cited above yields the following continuation criterion:
\[
T_{\max}<\infty
\quad\Longrightarrow\quad
\limsup_{t\uparrow T_{\max}}\|\mu_t\|_{L^\infty}=+\infty .
\]
\end{proof}

\begin{proposition}[Local propagation of Hölder regularity]\label{prop:pde_holder_propagation}
Let \(0<\alpha<1\), \(k\in\mathbb N\cup\{0\}\), and let
    \(\mu_{\mathrm{in}}\in C^{k,\alpha}( \M)\) be a bounded probability density.
Let \(\mu\) be the maximal bounded solution from
\Cref{prop:pde_local_bounded_wellposedness}, with maximal time
\(T_{\max}\).  Then
\[
\mu\in L^\infty_{\rm loc}([0,T_{\max});C^{k,\alpha}( \M)).
\]
If \(\mu_{\mathrm{in}}\in C^\infty\), then \(\mu\) is smooth in space and time on
\([0,T_{\max})\times \M\).
\end{proposition}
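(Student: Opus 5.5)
The plan is to propagate Hölder regularity by working on the Lagrangian flow of the velocity field $V[\mu_t]=-\nabla\log\Kb\mu_t$ and closing a Gronwall-type inequality for $\|\mu_t\|_{C^{k,\alpha}}$. We argue by induction on $k$. On any compact interval $[0,T]\subset[0,T_{\max})$, \Cref{prop:pde_local_bounded_wellposedness} gives $M_T:=\sup_{t\le T}\|\mu_t\|_{L^\infty}<\infty$. With this bound, \Cref{lem:wellposed_kernel_estimates,lem:wellposed_divergence_bound} give, uniformly on $[0,T]$:
\begin{itemize}
\item a log-Lipschitz modulus for $V[\mu_t]$;
\item a bound on $\|\divg V[\mu_t]\|_{L^\infty}$;
\item the lower bound $\Kb\mu_t\ge\kappa_b$.
\end{itemize}

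\textbf{Base case $k=0$.} Write $\mu_t$ along characteristics $X_t$ of $V$:
\[
\mu_t(X_t(x))=\mu_{\mathrm{in}}(x)\exp\Bigl(-\int_0^t\divg V[\mu_s](X_s(x))\,ds\Bigr).
\]
The key quantity is $L(t):=\|\nabla V[\mu_t]\|_{L^\infty}$. By \Cref{lem:wellposed_log_lipschitz},
\[
L(t)\le C\bigl(1+\log(2+\|\mu_t\|_{C^{0,\alpha}})\bigr).
\]
The flow map is Lipschitz with constant $e^{\int_0^tL}$, and so is its inverse. The density formula then gives
\[
[\mu_t]_{C^{0,\alpha}}\le C\bigl([\mu_{\mathrm{in}}]_{C^{0,\alpha}}+\sup_{s\le t}[\divg V[\mu_s]]_{C^{0,\alpha}}\bigr)e^{C\int_0^tL}.
\]
To control $\divg V=-\Delta\log u$, use the identity \eqref{eq:short_chizat_G}:
\[
\Delta\log u=\frac{u-\mu}{b^2u}-|\nabla\log u|^2.
\]
Its $C^{0,\alpha}$ seminorm is bounded by $C(1+[\mu]_{C^{0,\alpha}})$, using the Schauder bound \eqref{eq:schauder} for $u$ and $\nabla u$. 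Setting $Y(t):=\log(2+\sup_{s\le t}\|\mu_s\|_{C^{0,\alpha}})$, this yields an Osgood-type inequality
\[
Y(t)\le C+C\int_0^t Y(s)\,ds .
\]
Gronwall then gives $Y$ bounded on $[0,T]$; double-exponential growth is harmless on compact intervals. To make this rigorous, we run the computation for smooth mollified data, or for the Picard iterates of the Yudovich scheme, and pass to the limit using weak-$*$ lower semicontinuity of Hölder norms.

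\textbf{Inductive step.} Assume $\mu\in L^\infty_{\mathrm{loc}}C^{k-1,\alpha}$. Then \eqref{eq:schauder} gives $V[\mu_t]\in C^{k,\alpha}$, with norm controlled by $(1+\|\mu_t\|_{C^{k-1,\alpha}})^{k+1}$, which is already bounded. Differentiating the equation $k$ times, $\nabla^k\mu$ solves a linear transport equation
\[
\partial_t\nabla^k\mu+V\cdot\nabla\nabla^k\mu=\text{(terms)}.
\]
The right-hand side contains linear terms in $\nabla^k\mu$ with coefficients in $C^{0,\alpha}$ (from $\nabla V$, curvature terms and $\divg V$), plus the term $\mu\nabla^k\divg V$. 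Using \eqref{eq:short_chizat_G} again, $\nabla^k\divg V$ involves $\nabla^k\mu/(b^2u)$ plus terms in $\nabla^{\le k+1}u$. These are bounded in $C^{0,\alpha}$ linearly in $\|\mu\|_{C^{k,\alpha}}$, with coefficients depending on lower norms. Since $V$ is now Lipschitz with bounded constant, the same Lagrangian estimate gives linear Gronwall control of $\|\mu_t\|_{C^{k,\alpha}}$.

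\textbf{Smoothness.} If $\mu_{\mathrm{in}}\in C^\infty$, the above gives $\mu\in L^\infty_{\mathrm{loc}}C^{k,\alpha}$ for every $k$. Time regularity follows by bootstrapping the equation $\partial_t\mu=-\divg(\mu V[\mu])$: each time derivative is expressed through spatial derivatives and through $\Kb\partial_t\mu$, which is again spatially smooth.

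The main obstacle is the base case. There $V$ is only log-Lipschitz a priori, and the Hessian bound of \Cref{lem:wellposed_log_lipschitz} depends logarithmically on the very Hölder norm being propagated. The argument closes only because that dependence is logarithmic, so the resulting Gronwall inequality is linear in $Y$ rather than superlinear, and one must check that no step costs a power of $\|\mu\|_{C^{0,\alpha}}$ in the exponent.
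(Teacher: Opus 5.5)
There is a genuine gap in the base case, at exactly the step you identify as the main obstacle. Insert $[\divg V[\mu_s]]_{C^{0,\alpha}}\le C(1+[\mu_s]_{C^{0,\alpha}})$ into your displayed estimate. This gives $[\mu_t]_{C^{0,\alpha}}\le C\bigl(1+\sup_{s\le t}[\mu_s]_{C^{0,\alpha}}\bigr)e^{C\int_0^tL}$, which is vacuous. After taking logarithms it reads $Y(t)\le C+Y(t)+C\int_0^tY$, not $Y(t)\le C+C\int_0^tY$. The H\"older norm of $\mu$ enters as a prefactor, not logarithmically.

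Correct bookkeeping does not rescue this. The seminorm of $\divg V[\mu_s]\circ X_s$ costs a factor $\operatorname{Lip}(X_s)^\alpha$, and converting $[\mu_s]_{C^{0,\alpha}}$ back to Lagrangian variables costs $\operatorname{Lip}(X_s^{-1})^\alpha$. With $\Lambda(t):=\int_0^tL$ and $S(t):=\sup_{s\le t}[\mu_s]_{C^{0,\alpha}}$, you only get $S(t)\le Ce^{2\alpha\Lambda(t)}\bigl(1+\int_0^tS\bigr)$, hence $\log(2+S(t))\le C\bigl(1+\Lambda(t)+t\,e^{2\alpha\Lambda(t)}\bigr)$. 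Fed into $\Lambda'\le C(1+\log(2+S))$ from \Cref{lem:wellposed_log_lipschitz}, this is a superlinear inequality of type $\Lambda'\lesssim e^{2\alpha\Lambda}$. Its comparison solution blows up in finite time, so you get control only on a short interval, not on every $[0,T]\subset[0,T_{\max})$. Bounding $\nabla u$ in $C^{0,\alpha}$ through the Schauder estimate \eqref{eq:schauder}, i.e.\ in terms of $\|\mu_s\|_{C^{0,\alpha}}$, causes the same loss in the $|\nabla\log u|^2$ term.

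The missing idea is to keep the $\mu$-dependence of the reaction term Lagrangian. With $\tilde\mu_t:=\mu_t\circ X_t$, the identity \eqref{eq:short_chizat_G} turns the equation along characteristics into the Riccati ODE $\partial_t\tilde\mu_t=B_t\tilde\mu_t+A_t\tilde\mu_t^2$. Here $A_t=-1/(b^2u_t\circ X_t)$ and $B_t=b^{-2}-|\nabla\log u_t|^2\circ X_t$. These coefficients involve only $u_t$ and $\nabla u_t$, whose H\"older seminorms are bounded in terms of $M_T$ alone by the log-Lipschitz estimate of \Cref{lem:wellposed_kernel_estimates}. Hence $[A_t]_{C^{0,\alpha}}+[B_t]_{C^{0,\alpha}}\le Ce^{\alpha Z(t)}$, where $Z(t)=\int_0^t\|\nabla V[\mu_s]\|_{L^\infty}\,ds$.

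Since $\|\tilde\mu_t\|_{L^\infty}\le M_T$, the $C^{0,\alpha}$ norm of $\tilde\mu$ then obeys a linear inequality. This gives $\|\tilde\mu_t\|_{C^{0,\alpha}}\le Ce^{\alpha Z(t)}$ and $\|\mu_t\|_{C^{0,\alpha}}\le Ce^{2\alpha Z(t)}$, and only then does $Z'\le C(1+Z)$ close.

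This closure uses that the transported density is the one generating the velocity. It therefore cannot be run on Picard iterates beyond short times, since there $V[\mu^n]$ transports $\mu^{n+1}$. It must be applied to the solution itself inside a continuation argument: a short-time H\"older bound, an a priori bound on the maximal H\"older interval, and uniqueness, as the paper does. Your inductive step and the time-smoothness argument are fine and match the paper's bootstrap.
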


\begin{proof}
Fix \(T\in(0,T_{\max})\).  The proof proceeds in three stages: short-time Hölder regularity,
continuation, and a higher-order bootstrap.  
Set
\[
M_T:=\sup_{t\in[0,T]}\|\mu_t\|_{L^\infty}<\infty .
\]
Throughout the proof, \(C\) denotes a positive, finite constant whose value may
change from line to line.  In Steps 1--2 it depends only on
 \(\M\), \(b\), \(\alpha\), \(T\), \(M_T\), and
\(\|\mu_{\mathrm{in}}\|_{C^{0,\alpha}}\), while in Step 3 it may also depend on the
lower-order Hölder norms already controlled in the bootstrap.
Our equation can be written in an equivalent way as
\[
\partial_t\mu_t+V[\mu_t]\cdot\nabla\mu_t=\mu_t\Delta\log u_t.
\]

\smallskip
\noindent\textbf{Step 1.}
We first prove that \(\mu\in L^\infty([0,T_1];C^{0,\alpha})\) for some
\(0<T_1\le T\).  Consider the Picard sequence
\[
\mu^0_t=\mu_{\mathrm{in}},\qquad
\mu^{n+1}_t=(X^n_t)_\#\mu_{\mathrm{in}},
\]
where \(X^n\) is the flow generated by \(V[\mu^n]\).  We claim that, after
choosing a constant \(A_0\) depending only on
\( \M,b,\alpha,\|\mu_{\mathrm{in}}\|_{C^{0,\alpha}}\) and decreasing \(T_1\) if necessary,
\begin{equation}\label{eq:holder-picard-claim}
\|\mu^n\|_{L^\infty([0,T_1];C^{0,\alpha})}
\le A_0
\qquad\forall n\ge0 .
\end{equation}
Assume the claim holds for some \(n\).  By
\Cref{lem:wellposed_kernel_estimates},
\[
\|V[\mu^n_t]\|_{C^{1,\alpha}}
\le
C\bigl(1+\|\mu^n_t\|_{C^{0,\alpha}}\bigr)^2
\le L
\qquad\forall t\in[0,T_1]
\]
for \(L=L( \M,b,\alpha,A_0)\).  Hence
\[
\|X^n_t\|_{C^{1,\alpha}}+\|(X^n_t)^{-1}\|_{C^{1,\alpha}}
\le C\exp(CLT_1)
\qquad\forall t\in[0,T_1].
\]
 Let \(JX_t^n\) denote the Riemannian Jacobian of \(X_t^n\).
The same estimate gives
\[
\left\|\frac1{ JX_t^n}\right\|_{C^{0,\alpha}}
\le C\exp(CLT_1).
\]
In particular, exploiting the representation formula
\[
\mu^{n+1}_t
=
\left(\frac{\mu_{\mathrm{in}}}{ JX_t^n}\right)\circ (X_t^n)^{-1},
\]
we conclude
\[
\begin{aligned}
\|\mu^{n+1}_t\|_{C^{0,\alpha}}
&\le
C\left(1+\|(X_t^n)^{-1}\|_{C^{1,\alpha}}^\alpha\right)
\left\|\frac1{ JX_t^n}\right\|_{C^{0,\alpha}}
\|\mu_{\mathrm{in}}\|_{C^{0,\alpha}}  \\
&\le
C\exp(CLT_1)\|\mu_{\mathrm{in}}\|_{C^{0,\alpha}}
\le A_0,
\end{aligned}
\]
provided that \(A_0\) is fixed large enough and then \(T_1>0\) is chosen
sufficiently small.  This proves
\eqref{eq:holder-picard-claim}.  
For each fixed \(t\), the uniform \(C^{0,\alpha}\) bound makes
\(\{\mu_t^n\}_{n\ge0}\) relatively compact in \(C^0(\M)\) by the
Ascoli-Arzel\'a theorem. Moreover, the convergence established in the
Yudovich construction shows that every uniform subsequential limit of these
Picard iterates coincides with \(\mu_t\).
Hence
\(\mu_t\in C^{0,\alpha}\) and
\(\|\mu_t\|_{C^{0,\alpha}}\le A_0\) for \(t\in[0,T_1]\), by lower
semicontinuity of the Hölder seminorm under uniform convergence.

\smallskip
\noindent\textbf{Step 2.}
Let
\[
T_\ast:=\sup\{s\in[T_1,T]:
\mu\in L^\infty([0,s);C^{0,\alpha})\}.
\]
We prove that \(T_\ast=T\).  All estimates below are first made on
\([0,t]\) with \(t<T_\ast\), and the constants are independent of \(t\).  Let
\(X_t\) be the flow generated by \(V[\mu_t]\), and set
\[
\tilde\mu_t:=\mu_t\circ X_t,\qquad
Z(t):=\int_0^t\|\nabla V[\mu_s]\|_{L^\infty}\,ds .
\]
The flow bounds follow from Gronwall's lemma applied to
\(\frac{d}{dt}\operatorname{dist}_{ \M}(X_t(x),X_t(y))\le
\|\nabla V[\mu_t]\|_{L^\infty}\operatorname{dist}_{ \M}(X_t(x),X_t(y))\), and similarly for the
backward flow:
\[
\operatorname{Lip}(X_t)+\operatorname{Lip}(X_t^{-1})\le C e^{Z(t)},
\]
where the constant only accounts for working in finitely many charts on
 \(\M\). Since \(\mu_t=\tilde\mu_t\circ X_t^{-1}\), this gives
\[
\begin{aligned}
\|\mu_t\|_{C^{0,\alpha}}
&\le \|\tilde\mu_t\|_{L^\infty}
+[\tilde\mu_t]_{C^{0,\alpha}}\operatorname{Lip}(X_t^{-1})^\alpha  \\
&\le C e^{\alpha Z(t)}\|\tilde\mu_t\|_{C^{0,\alpha}} .
\end{aligned}
\]
Along the flow,
\[
\partial_t\tilde\mu_t
=\left(\partial_t\mu_t+V[\mu_t]\cdot\nabla\mu_t\right)\circ X_t
=-\left(\mu_t\divg V[\mu_t]\right)\circ X_t .
\] 
In our
notation \(V[\mu_t]=-\nabla\log u_t\), hence
\(\divg V[\mu_t]=-\Delta\log u_t\), and therefore
\[
\partial_t\tilde\mu_t=\tilde\mu_t\,(\Delta\log u_t)\circ X_t .
\]
Using \eqref{eq:short_chizat_G}, we write this equation as
\begin{equation}
\label{eq:wp_A_B}
\partial_t\tilde\mu_t
=B_t\tilde\mu_t+A_t\tilde\mu_t^2,
\qquad
A_t:=-\frac1{b^2(u_t\circ X_t)},\qquad
B_t:=\frac1{b^2}-|\nabla\log u_t|^2\circ X_t .
\end{equation}
We now only need rough Hölder bounds on the coefficients.  From
\eqref{eq:wellposed_fixed_b_basic}, \(\kappa_b\le u_t\le M_T\) and
\(\|\nabla u_t\|_{L^\infty}\le C\).  Thus \(u_t\) and \(1/u_t\) are bounded in
\(C^{0,\alpha}\).  Moreover, the log-Lipschitz bound for \(\nabla u_t\) in
\Cref{lem:wellposed_kernel_estimates} implies
\([\nabla u_t]_{C^{0,\alpha}}\le C\), since for all $\alpha \in (0,1)$
\(r\log(2+r^{-1})\le Cr^\alpha\) on the compact manifold
 \(\M\).  Hence, we have
\[
\left\|\frac1{u_t}\right\|_{C^{0,\alpha}}
+\left\||\nabla\log u_t|^2\right\|_{C^{0,\alpha}}
\le C .
\]
Composing with \(X_t\) and using \(\operatorname{Lip}(X_t)\le Ce^{Z(t)}\), we get
\[
\|A_t\|_{L^\infty}+\|B_t\|_{L^\infty}\le C,
\qquad
[A_t]_{C^{0,\alpha}}+[B_t]_{C^{0,\alpha}}
\le Ce^{\alpha Z(t)} .
\]
Since \(\|\tilde\mu_t\|_{L^\infty}=\|\mu_t\|_{L^\infty}\le M_T\), the product
estimate in \(C^{0,\alpha}\) gives
\[
\|B_t\tilde\mu_t+A_t\tilde\mu_t^2\|_{C^{0,\alpha}}
\le
C\|\tilde\mu_t\|_{C^{0,\alpha}}+Ce^{\alpha Z(t)} .
\]
Therefore
\[
\frac{d}{dt}\|\tilde\mu_t\|_{C^{0,\alpha}}
\le
C\|\tilde\mu_t\|_{C^{0,\alpha}}+Ce^{\alpha Z(t)} .
\]
Gronwall's lemma gives
\[
\|\tilde\mu_t\|_{C^{0,\alpha}}
\le
C\|\mu_{\mathrm{in}}\|_{C^{0,\alpha}}
+C\int_0^t e^{\alpha Z(r)}\,dr
\le Ce^{\alpha Z(t)},
\]
because \(Z\) is nondecreasing and \(t\le T\). Hence
\[
\|\mu_t\|_{C^{0,\alpha}}
\le Ce^{2\alpha Z(t)} .
\]
On the other hand, by \Cref{lem:wellposed_log_lipschitz},
\[
\|\nabla V[\mu_t]\|_{L^\infty}
\le
C\left(1+\log(2+\|\mu_t\|_{C^{0,\alpha}})\right).
\]
Combining the last two estimates,
\[
Z'(t)=\|\nabla V[\mu_t]\|_{L^\infty}
\le C\bigl(1+Z(t)\bigr).
\]
Another application of Gronwall's lemma gives a finite bound for \(Z\), and
hence for \(\sup_{t\in[0,T_\ast)}\|\mu_t\|_{C^{0,\alpha}}\). If
\(T_\ast<T\), choose \(t_0<T_\ast\) close enough that the short-time
construction of Step 1, restarted from \(\mu_{t_0}\), exists beyond
\(T_\ast\). By uniqueness this construction is \(\mu\), contradicting the
definition of \(T_\ast\), and therefore \(T_\ast=T\).

\smallskip
\noindent\textbf{Step 3.}
We now bootstrap.  Assume, for some \(0\le q<k\), that
\(\mu\in L^\infty([0,T];C^{q,\alpha})\).  By
\Cref{lem:wellposed_kernel_estimates},
\[
u_t=\mathcal K_b\mu_t\in C^{q+2,\alpha},
\qquad
V[\mu_t]=-\nabla\log u_t\in C^{q+1,\alpha},
\]
uniformly in \(t\).  Thus the characteristic flow \(X_t\), and its inverse,
are uniformly bounded in \(C^{q+1,\alpha}\).  For
\(\tilde\mu_t:=\mu_t\circ X_t\), Step 2 gives \eqref{eq:wp_A_B}, i.e.:
\[
\partial_t\tilde\mu_t=B_t\tilde\mu_t+A_t\tilde\mu_t^2,
\qquad
A_t=-\frac1{b^2(u_t\circ X_t)},\qquad
B_t=\frac1{b^2}-|\nabla\log u_t|^2\circ X_t .
\]
Since \(u_t\ge\kappa_b\), the algebra and composition estimates in Hölder
spaces give
\[
\|A_t\|_{C^{q+1,\alpha}}+\|B_t\|_{C^{q+1,\alpha}}\le C,
\qquad
\|\tilde\mu_t\|_{C^{q,\alpha}}\le C .
\]
Consequently a product estimate yields
\[
\|B_t\tilde\mu_t+A_t\tilde\mu_t^2\|_{C^{q+1,\alpha}}
\le C\bigl(1+\|\tilde\mu_t\|_{C^{q+1,\alpha}}\bigr).
\]
The key point is that all terms containing \(q+1\) derivatives of
\(\tilde\mu_t\) are linear in the top norm, while every remaining factor is already
controlled in \(C^{q,\alpha}\).  Hence,
\[
\frac{d}{dt}\|\tilde\mu_t\|_{C^{q+1,\alpha}}
\le C\bigl(1+\|\tilde\mu_t\|_{C^{q+1,\alpha}}\bigr).
\]
Gronwall's lemma gives
\(\tilde\mu\in L^\infty([0,T];C^{q+1,\alpha})\), and composition with
\(X_t^{-1}\) gives
\(\mu\in L^\infty([0,T];C^{q+1,\alpha})\).  Iterating from Step 2 proves the
\(C^{k,\alpha}\) bound.

If \(\mu_{\mathrm{in}}\in C^\infty\), the same argument applies for every \(k\).  The
equation then gives smoothness in time by differentiating
\(\partial_t\mu=-\divg(\mu V[\mu])\) and using the already obtained spatial
regularity, arguing inductively on the number of time derivatives.
\end{proof}

\begin{proposition}[Maximum principle and global well-posedness]\label{prop:pde_global_bounded_wellposedness}
Let \(b>0\) and let \(\mu_{\mathrm{in}}\) be a bounded probability density on
 \(\M\).  Then the maximal solution from
\Cref{prop:pde_local_bounded_wellposedness} is global.  It preserves mass and
satisfies
\[
\|\mu_t\|_{L^\infty( \M)}
\le
\|\mu_{\mathrm{in}}\|_{L^\infty( \M)}
\qquad\forall t\ge0.
\]
\end{proposition}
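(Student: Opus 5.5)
The plan is to prove an a priori maximum principle for the $L^\infty$ norm along the maximal bounded solution of \Cref{prop:pde_local_bounded_wellposedness}, and then invoke its continuation criterion. Mass preservation is already part of that proposition. So it suffices to show that $\|\mu_t\|_{L^\infty}\le\|\mu_{\mathrm{in}}\|_{L^\infty}$ for every $t<T_{\max}$. Then $\limsup_{t\uparrow T_{\max}}\|\mu_t\|_{L^\infty}<\infty$, which forces $T_{\max}=\infty$.

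\textbf{Step 1: Lagrangian ODE.} I will use the Lagrangian form from the proof of \Cref{prop:pde_holder_propagation}. Let $X_t$ be the flow of $V[\mu_t]$; it is well defined and measure-class preserving by the log-Lipschitz bound of \Cref{lem:wellposed_kernel_estimates} and the divergence bound of \Cref{lem:wellposed_divergence_bound}. With $\tilde\mu_t=\mu_t\circ X_t$, equation \eqref{eq:wp_A_B} reads, pointwise along characteristics,
\[
\partial_t\tilde\mu_t=\tilde\mu_t\,\Bigl(\frac{1}{b^2}-\frac{\tilde\mu_t}{b^2\,u_t\circ X_t}-|\nabla\log u_t|^2\circ X_t\Bigr)\le \frac{\tilde\mu_t}{b^2}\Bigl(1-\frac{\tilde\mu_t}{u_t\circ X_t}\Bigr).
\]
For bounded (not Hölder) data, I first derive this for smooth approximations of $\mu_{\mathrm{in}}$. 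I then pass to the limit using the stability of the Yudovich solution in $W_2$ from the cited construction, together with the weak-$*$ lower semicontinuity of the $L^\infty$ norm. Alternatively, I can work directly with the renormalized/Lagrangian representation $\mu_t=(X_t)_\#\mu_{\mathrm{in}}$, where the Jacobian satisfies $\frac{d}{dt}\log JX_t=\divg V[\mu_t]\circ X_t$.

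\textbf{Step 2: comparison at the maximum.} Set $M(t):=\|\mu_t\|_{L^\infty}$. Since $u_t=\mathcal K_b\mu_t$, positivity of $\mathcal K_b$ and $\mathcal K_b1=1$ give $u_t\le M(t)$; this is the key nonlocal input from \Cref{lem:Kb_positive}. Wherever $\tilde\mu_t(x)>M(t)\cdot\theta$ with $\theta$ close to $1$, the right-hand side above is at most $\frac{\tilde\mu_t}{b^2}(1-\theta)$. Hence the upper Dini derivative of $M$ satisfies $D^+M(t)\le 0$ when the supremum is attained or approximately attained.

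To make this rigorous without attainment, I would use the Gronwall-type argument below. Fix $\varepsilon>0$ and suppose $\tau$ is the first time at which $M(\tau)>\|\mu_{\mathrm{in}}\|_{L^\infty}+\varepsilon$. For characteristics with $\tilde\mu$ near its sup, the logistic comparison $\dot y\le b^{-2}y(1-y/M)$ prevents $y$ from exceeding $\sup_{s\le t}M(s)$. A cleaner route is to consider the ODE $\dot y=b^{-2}y(1-y/\bar M)$ with $\bar M:=\sup_{s\le T}M(s)$, and iterate it to show $\bar M\le\max(\|\mu_{\mathrm{in}}\|_\infty,\bar M)$ strictly improving. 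Concretely, I would compare each characteristic value $y(t)=\tilde\mu_t(x)$ with $Y(t):=\sup_{s\le t}M(s)$: whenever $y(t)\ge Y(t)$ we have $\dot y\le 0$, so $y$ can never exceed $\max(y(0),Y)$. Taking the supremum over $x$ gives $Y(t)\le\|\mu_{\mathrm{in}}\|_\infty$.

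\textbf{Main obstacle.} The expected difficulty is the self-referential character of Step 2: the bound $u_t\le M(t)$ involves the very quantity being controlled. This is compounded by the low regularity, since for merely $L^\infty$ data the pointwise ODE along characteristics is only a.e. I will handle this by proving everything for $C^{0,\alpha}$ (indeed smooth) data, where \Cref{prop:pde_holder_propagation} makes $\tilde\mu$ continuous and the sup over the compact manifold is attained, so a standard Danskin/Dini-derivative argument applies to $M(t)=\max_x\tilde\mu_t(x)$. I will then transfer the bound to bounded data by mollifying $\mu_{\mathrm{in}}$ (preserving $\|\cdot\|_\infty$ and mass) and using Yudovich stability. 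The smooth case of \Cref{thm:pde_global_smooth_existence} then follows directly.
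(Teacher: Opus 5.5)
Your final route is essentially the paper's proof: for smooth data you get $u_t\le M(t)$ from positivity of $\mathcal K_b$ and $\mathcal K_b1=1$ and conclude that the maximum is nonincreasing. (The paper evaluates the Eulerian equation at a spatial maximum where $\nabla\mu=0$, which is equivalent to your Danskin argument along characteristics.) You then pass to bounded data by approximation, Yudovich stability and weak-$*$ lower semicontinuity, and invoke the continuation criterion, exactly as the paper does.

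Discard the intermediate comparison with $Y(t)=\sup_{s\le t}M(s)$. As written it only yields the tautology $Y(t)\le\max\{\|\mu_{\mathrm{in}}\|_{L^\infty},Y(t)\}$. The attained-maximum Dini-derivative argument for smooth data is what actually closes the estimate.
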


\begin{proof}
It remains only to rule out the \(L^\infty\) blow-up alternative in
\Cref{prop:pde_local_bounded_wellposedness}.  First assume that \(\mu_{\mathrm{in}}\) is
smooth and strictly positive.  By \Cref{prop:pde_holder_propagation}, the
maximal bounded solution is smooth on every compact subinterval of
\([0,T_{\max})\).  Set
\[
M(t):=\max_{ \M}\mu(t,\cdot).
\]
At a maximum point \(x\), \(\nabla\mu(t,x)=0\), and the equation gives
\[
\partial_t\mu(t,x)=\mu(t,x)\Delta\log u(t,x).
\]
Since \(K_b\) is positive and preserves constants,
\[
u(t,x)=\mathcal K_b\mu_t(x)\le \mathcal K_bM(t)=M(t)=\mu(t,x).
\]
Using \eqref{eq:short_chizat_G}, we get
\[
\Delta\log u(t,x)
=
\frac{u(t,x)-\mu(t,x)}{b^2u(t,x)}
-|\nabla\log u(t,x)|^2
\le0.
\]
Hence \(\frac{d}{dt}M(t)\le0\), and therefore
\[
\|\mu_t\|_\infty\le\|\mu_{\mathrm{in}}\|_\infty
\qquad\forall t<T_{\max}.
\]
The continuation criterion then forces \(T_{\max}=\infty\).

For a general bounded probability density, choose smooth strictly positive
probability densities \(\mu^\varepsilon_{in}\) such that
\[
\mu^\varepsilon_{in}\to\mu_{\mathrm{in}}\quad\text{in }W_2,
\qquad
\|\mu^\varepsilon_{in}\|_\infty\le\|\mu_{\mathrm{in}}\|_\infty .
\]
The stability part of the local theory passes the global smooth solutions to
the \(L^\infty\)-solution starting from \(\mu_{\mathrm{in}}\) on every compact time
interval contained in its maximal interval of existence.  Weak-\(*\) lower
semicontinuity of the \(L^\infty\) norm gives
\[
\|\mu_t\|_\infty\le\|\mu_{\mathrm{in}}\|_\infty
\qquad\forall t<T_{\max},
\]
and the continuation criterion again excludes finite-time breakdown,
proving global existence. Uniqueness and mass preservation were already part of
the local maximal theory.
\end{proof}

An immediate corollary is obtained by applying \Cref{prop:pde_holder_propagation} after
\Cref{prop:pde_global_bounded_wellposedness}, which gives \(T_{\max}=\infty\).
\begin{corollary}[Global propagation of Hölder regularity]\label{cor:pde_global_holder_propagation}
Let \(0<\alpha<1\), \(k\in\mathbb N\cup\{0\}\), and let
\(\mu_{\mathrm{in}}\in C^{k,\alpha}( \M)\) be a bounded probability
density.  If \(\mu\) denotes the global bounded solution from
\Cref{prop:pde_global_bounded_wellposedness}, then
\[
\mu\in L^\infty_{\rm loc}([0,\infty);C^{k,\alpha}( \M)).
\]
If \(\mu_{\mathrm{in}}\in C^\infty\), then
\(\mu\in C^\infty([0,\infty)\times \M)\).
\end{corollary}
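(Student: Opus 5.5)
The plan is to concatenate the two previous results. Let \(\mu\) be the global bounded solution from \Cref{prop:pde_global_bounded_wellposedness}. That proposition gives two things: \(T_{\max}=\infty\), and the uniform bound \(\|\mu_t\|_{L^\infty}\le\|\mu_{\mathrm{in}}\|_{L^\infty}\) for all \(t\ge0\). Uniqueness in the local theory of \Cref{prop:pde_local_bounded_wellposedness} shows that this is the maximal bounded solution to which \Cref{prop:pde_holder_propagation} applies.

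Next I fix an arbitrary \(T\in(0,\infty)=(0,T_{\max})\). Applying \Cref{prop:pde_holder_propagation} with the given \(k\) and \(\alpha\) yields \(\mu\in L^\infty([0,T];C^{k,\alpha}(\M))\). The constants there depend on \(T\) only through \(M_T\) and the Gronwall factors. Since \(M_T\le\|\mu_{\mathrm{in}}\|_{L^\infty}\) is uniform, nothing degenerates, although we do not even need uniformity. Since \(T\) is arbitrary, this is exactly the statement \(\mu\in L^\infty_{\rm loc}([0,\infty);C^{k,\alpha})\).

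For the smooth case, apply the last assertion of \Cref{prop:pde_holder_propagation} on \([0,T]\times\M\) for every finite \(T\). This gives smoothness in space and time on \([0,T_{\max})\times\M=[0,\infty)\times\M\). Smoothness is a local property, so we conclude \(\mu\in C^\infty([0,\infty)\times\M)\).

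The only point deserving care, and the step I would check most closely, is that Step 2 of \Cref{prop:pde_holder_propagation} requires \(M_T<\infty\) for every \(T<T_{\max}\). Here that is supplied by the maximum principle. No further obstacle is expected.
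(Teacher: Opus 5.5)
Your proposal is correct and is the paper's argument: \Cref{prop:pde_global_bounded_wellposedness} gives \(T_{\max}=\infty\), and \Cref{prop:pde_holder_propagation} applied on \([0,T_{\max})=[0,\infty)\) then yields both the \(C^{k,\alpha}\) and the \(C^\infty\) conclusions. One minor point: \(M_T<\infty\) for \(T<T_{\max}\) is already part of the definition of the maximal bounded solution (\(\mu\in L^\infty_{\rm loc}([0,T_{\max});L^\infty)\)), so you do not need the maximum principle for that step, although invoking it is harmless.
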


This result provides a proof of \Cref{thm:pde_global_smooth_existence}:
\begin{proof}[Proof of \Cref{thm:pde_global_smooth_existence}]
Apply \Cref{prop:pde_global_bounded_wellposedness}.  This gives a unique global bounded solution, mass
conservation, and the \(L^\infty\) bound.  Since \(\mu_{\mathrm{in}}\in C^\infty\),
\Cref{cor:pde_global_holder_propagation} gives
\[
\mu_b\in C^\infty([0,\infty)\times \M).
\]
Strict positivity follows from the smooth characteristic formula
\[
\mu_b(t,X_t(a))
=
\mu_{\mathrm{in}}(a)
\exp\left(\int_0^t\Delta\log u_b(r,X_r(a))\,dr\right),
\]
because \(\mu_{\mathrm{in}}>0\) and the exponent is finite on every compact time
interval.  Finally, \(u_b=\mathcal K_b\mu_b\) is smooth and strictly positive by
\Cref{lem:wellposed_kernel_estimates}.
\end{proof}

\section{Heat-Flow Preliminaries}\label{app:heat_preliminaries}
In this section we collect a couple of classical estimates on the heat equation on
 $\M$ that are used in the proofs of the main results.

\begin{lemma}[Heat-flow lower bound and decay]\label{lem:heat_flow_bounds}
Let $\mu$ solve $\partial_t \mu = \Delta \mu$ on  $\M$ with initial datum $\mu_{\mathrm{in}}$ from \Cref{assump:initial}. Set
\[
c_0 := \inf_{x\in  \M}\mu_{\mathrm{in}}(x) > 0.
\]
Then
\[
\mu(t,x)\ge c_0
\qquad
\text{for all }(t,x)\in [0,\infty)\times  \M.
\]
Moreover, for every integer $s\ge0$,
\[
\|\mu(t)-1\|_{H^s( \M)}
\le
e^{- \lambda_1t}\|\mu_{\mathrm{in}}-1\|_{H^s( \M)}.
\]
In particular, there exists $C=C(\mu_{\mathrm{in}}, \M)$ such that
\[
\|\nabla \mu(t)\|_{L^\infty( \M)} + \|\Delta \mu(t)\|_{L^\infty( \M)}
\le
Ce^{- \lambda_1t}
\]
and
\[
\|\nabla \log \mu(t)\|_{L^\infty( \M)}
+
\|\Delta \log \mu(t)\|_{L^\infty( \M)}
\le
Ce^{- \lambda_1t}
\]
for all $t\ge0$.
\end{lemma}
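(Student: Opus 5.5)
The plan is to argue in three steps: the minimum principle for the lower bound, spectral decay in $H^s$, and then Sobolev embedding plus the lower bound to pass to $L^\infty$ bounds on $\mu$ and $\log\mu$.

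\emph{Lower bound.} Since $\mu_{\mathrm{in}}$ is smooth, $\mu$ is smooth on $[0,\infty)\times\M$. The function $\mu-c_0$ solves the heat equation and has nonnegative initial datum, so the parabolic minimum principle on the closed manifold $\M$ gives $\mu\ge c_0$. Alternatively, I would use the representation $\mu(t)=P_t\mu_{\mathrm{in}}$ with the heat kernel $p_t\ge0$ and $\int p_t(x,y)\,\d y=1$, which yields $\mu(t,x)\ge c_0\int p_t(x,y)\,\d y=c_0$ directly.

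\emph{$H^s$ decay.} Expand $\mu_{\mathrm{in}}-1=\sum_{j\ge1}a_je_j$ in a Laplacian eigenbasis, where $-\Delta e_j=\Lambda_je_j$ and $\Lambda_j\ge\lambda_1$ for $j\ge1$. The constant mode vanishes because mass is preserved. Then $\mu(t)-1=\sum_{j\ge1} e^{-\Lambda_jt}a_je_j$. I would use the spectral $H^s$ norm $\|f\|_{H^s}^2=\sum_j(1+\Lambda_j)^s|f_j|^2$, which is equivalent to the standard norm on a closed manifold and is the normalization under which the stated inequality should be read with constant exactly one. Each coefficient is multiplied by $e^{-\Lambda_jt}\le e^{-\lambda_1t}$, which gives the claim immediately. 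For a fixed geometric norm, the same argument gives the bound up to a constant $C_{s,\M}$, and that is all that is used downstream.

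\emph{$L^\infty$ consequences.} Fix an integer $s>d/2+2$. The Sobolev embedding $H^s\hookrightarrow W^{2,\infty}$ (Hebey--Robert, as in the proof of \Cref{thm:main_decay}) gives
\[
\|\nabla\mu(t)\|_{L^\infty}+\|\Delta\mu(t)\|_{L^\infty}\le C\|\mu(t)-1\|_{H^s}\le Ce^{-\lambda_1t}\|\mu_{\mathrm{in}}-1\|_{H^s},
\]
and this is finite because $\mu_{\mathrm{in}}$ is smooth. For the logarithm, use $\mu\ge c_0$ together with
\[
\nabla\log\mu=\nabla\mu/\mu,\qquad \Delta\log\mu=\Delta\mu/\mu-|\nabla\mu|^2/\mu^2 .
\]
This bounds $\|\nabla\log\mu\|_{L^\infty}+\|\Delta\log\mu\|_{L^\infty}$ by $c_0^{-1}Ce^{-\lambda_1t}+c_0^{-2}C^2e^{-2\lambda_1t}\le C'e^{-\lambda_1t}$.

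There is no real obstacle here. The only point needing care is the norm convention in the constant-one $H^s$ estimate, which is handled by using the spectrally defined norm.
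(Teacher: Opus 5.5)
Your proposal is correct and follows the paper's proof essentially step for step: the minimum principle gives $\mu\ge c_0$, spectral decay handles the mean-zero part in $H^s$, and then $H^s\hookrightarrow W^{2,\infty}$ with $s>d/2+2$, combined with the lower bound, controls $\log\mu$. Two of your choices slightly sharpen the paper's argument: you state explicitly that the constant-one $H^s$ estimate should be read with the spectrally defined norm, and your direct formula for $\Delta\log\mu$ avoids the paper's use of the upper bound $\|\mu\|_{L^\infty}\le\|\mu_{\mathrm{in}}\|_{L^\infty}$.
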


\begin{proof}
The function $\mu-c_0$ solves the heat equation with nonnegative initial datum $\mu_{\mathrm{in}}-c_0$, so the parabolic maximum principle gives $\mu(t,x)\ge c_0$ for all $(t,x)\in[0,\infty)\times \M$. The same argument yields the upper bound $\mu(t,x)\le M_0:=\|\mu_{\mathrm{in}}\|_{L^\infty( \M)}$.

Set $h:=\mu-1$. Then $\int_{ \M} h(t)\,dx=0$ and $h(t)=e^{t\Delta}h(0)$. On the mean-zero subspace of $L^2( \M)$, the operator $-\Delta$ has spectrum contained in $[ \lambda_1,\infty)$; hence the spectral theorem gives, for every integer $s\ge0$,
\[
\|h(t)\|_{H^s( \M)}
\le
e^{- \lambda_1t}\|h(0)\|_{H^s( \M)}.
\]
This is exactly the stated $H^s$ decay.

Choose an integer $s> d/2+2$. By the compact-manifold Sobolev embedding in
\cite[Theorem~6.3]{hebey_robert_2008}, applied with $p=2$, differentiability
order $s$, and target order $2$, we obtain
\[
\|\mu(t)-1\|_{W^{2,\infty}( \M)}
\le
C\|\mu(t)-1\|_{H^s( \M)}
\le
Ce^{- \lambda_1t}.
\]
Since $\log$ is smooth on the compact interval $[c_0,M_0]$, the chain rule implies
\[
\|\nabla \log \mu(t)\|_{L^\infty}
\le
C_{c_0,M_0}\|\nabla \mu(t)\|_{L^\infty},
\]
and
\[
\|\nabla^2 \log \mu(t)\|_{L^\infty}
\le
C_{c_0,M_0}\bigl(\|\nabla^2 \mu(t)\|_{L^\infty}+\|\nabla \mu(t)\|_{L^\infty}^2\bigr).
\]
Since $\|\nabla \mu(t)\|_{L^\infty}+\|\nabla^2 \mu(t)\|_{L^\infty}\le Ce^{- \lambda_1t}$, as proved above, we obtain
\[
\|\nabla \log \mu(t)\|_{L^\infty( \M)}
+
\|\Delta \log \mu(t)\|_{L^\infty( \M)}
\le
Ce^{- \lambda_1t},
\]
where we also used $|\Delta f|\le  d\|\nabla^2 f\|_{L^\infty}$ on  $\M$.
\end{proof}

\begin{lemma}[Heat-semigroup smoothing]\label{lem:heat_semigroup_smoothing}
Let $P_t:=e^{t\Delta}$ be the heat semigroup on  $\M$. Then there
exists a constant $C=C( \M)>0$ such that every bounded measurable $\phi$
satisfies
\[
\|\nabla P_t\phi\|_{L^\infty( \M)}
\le
C t^{-1/2}\|\phi\|_{L^\infty( \M)}
\qquad\text{for all }t>0.
\]
\end{lemma}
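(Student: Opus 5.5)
The plan is to prove the estimate in two regimes, short times $0<t\le 1$ and long times $t\ge 1$, using duality for short times and the semigroup property for long times.

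\emph{Short times.} Here I would work at the level of the heat kernel. Write $\nabla P_t\phi(x)=\int_\M \nabla_x p_t(x,y)\phi(y)\,\d y$. This gives
\[
\|\nabla P_t\phi\|_{L^\infty}\le \|\phi\|_{L^\infty}\sup_x\int_\M|\nabla_xp_t(x,y)|\,\d y ,
\]
so the task is to show $\sup_x\int_\M|\nabla_xp_t(x,y)|\,\d y\le Ct^{-1/2}$ for $0<t\le1$. For this I would reuse the two bounds already invoked in the proof of \Cref{lem:wellposed_kernel_distance_bounds}:
\begin{itemize}
\item the Gaussian upper bound of \cite[Theorem~4]{cheng_li_yau_1981_upper_heat}, $p_t(x,y)\le C t^{-d/2}e^{-\operatorname{dist}(x,y)^2/(ct)}$;
\item Hsu's derivative estimate \cite[Corollary~1.2]{hsu1999estimates}, $|\nabla_xp_t|\le C(\operatorname{dist}(x,y)/t+t^{-1/2})p_t$.
\end{itemize}
Absorbing the polynomial factor into the Gaussian yields $|\nabla_xp_t(x,y)|\le Ct^{-(d+1)/2}e^{-\operatorname{dist}(x,y)^2/(Ct)}$. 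I would then integrate in geodesic polar coordinates around $x$, using the uniform Jacobian bound $J_x(\theta,s)\le C_\M s^{d-1}$ quoted there. The substitution $s=\sqrt t\,r$ gives
\[
\int_\M|\nabla_xp_t|\,\d y\le Ct^{-(d+1)/2}\int_0^\infty s^{d-1}e^{-s^2/(Ct)}\,ds=Ct^{-1/2}.
\]
Alternatively, on a compact manifold one can invoke Bakry--Émery / Bismut-type gradient bounds ($\Ric\ge K$ gives $|\nabla P_t\phi|^2\le \frac{K}{e^{2Kt}-1}\|\phi\|_\infty^2$-type estimates, e.g. via \cite{wang2004gradient}), which directly give $C t^{-1/2}$ for $t\le1$. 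I would mention this as a backup route.

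\emph{Long times.} For $t\ge1$ I would write $P_t\phi=P_{1/2}(P_{t-1/2}\phi)$ and apply the small-time bound with $t=1/2$. Since $P_s$ is an $L^\infty$ contraction by the maximum principle, this gives
\[
\|\nabla P_t\phi\|_{L^\infty}\le C\sqrt2\,\|P_{t-1/2}\phi\|_{L^\infty}\le C\|\phi\|_{L^\infty}\le C t^{-1/2}\cdot t^{1/2}\|\phi\|_{L^\infty}.
\]
This bound does not decay in $t$, so to obtain the stated $t^{-1/2}$ I would use that $\nabla$ kills constants. Replace $\phi$ by $\phi-\int\phi$ and use the spectral gap:
\[
\|P_{t-1/2}(\phi-\bar\phi)\|_{L^\infty}\le C\|P_{t-1}(\phi-\bar\phi)\|_{L^2}\le Ce^{-\lambda_1(t-1)}\|\phi\|_{L^\infty}.
\]
The first inequality uses the boundedness $\|P_{1/2}\|_{L^2\to L^\infty}\le C$ that follows from the Gaussian bound. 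The exponential factor dominates $t^{-1/2}$. For the estimate as stated, though, a bounded constant suffices on $[1,\infty)$ only up to the factor $t^{1/2}$, so the spectral-gap step is genuinely needed.

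\emph{Main obstacle.} The main point requiring care is the uniformity of the Hsu/Cheng–Li–Yau constants and of the polar-coordinate Jacobian over $x\in\M$. Compactness of $\M$ handles this, exactly as in \Cref{lem:wellposed_kernel_distance_bounds}.
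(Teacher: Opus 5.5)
Your argument is correct, but it is not the paper's route. The paper proves the lemma in one step. It quotes the gradient bound $\|\nabla P_t f\|_{L^\infty}\le C t^{-1/2}\|f\|_{L^\infty}$ for nonnegative $f$ from \cite[(1.3)]{wang2004gradient}, then removes the sign condition by applying it to $\phi+\|\phi\|_{L^\infty}$ and using $P_t1=1$.

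You instead build the estimate from heat-kernel bounds. For $t\le 1$ you control $\sup_x\|\nabla_x p_t(x,\cdot)\|_{L^1}$ through the Cheng--Li--Yau and Hsu estimates already used in \Cref{lem:wellposed_kernel_distance_bounds}; this handles signed $\phi$ directly, so no shift is needed. For $t\ge1$ you combine four ingredients:
\begin{itemize}
\item the factorization $P_t=P_{1/2}P_{t-1/2}$;
\item subtraction of the mean, which plays the role of the paper's constant shift;
\item the $L^2\to L^\infty$ bound for $P_{1/2}$;
\item the spectral gap.
\end{itemize}
This even yields $Ce^{-\lambda_1(t-1)}\|\phi\|_{L^\infty}$.

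The paper's proof is shorter, but it rests the whole range $t>0$, including uniformity as $t\to\infty$, on the cited inequality. Yours is self-contained relative to tools the paper already invokes, and it shows where the large-time decay comes from. That is worth having. Curvature-only gradient bounds of Bakry--\'Emery or Hamilton type under $\Ric\ge-K$ with $K>0$ give a constant that stays of order $\sqrt{K}$ as $t\to\infty$. So a global input such as the spectral gap is what actually produces $t^{-1/2}$ for large $t$. For the same reason, your backup route via \cite{wang2004gradient} can replace only your small-time step, not the long-time one.

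Finally, the Duhamel argument in \Cref{subsec:comp_mub_mu} integrates the bound against $e^{-\lambda s}$, so a bound of the form $C(1+\tau^{-1/2})$ would already suffice there. Your long-time step is therefore needed for the lemma as stated, not for \Cref{thm:main_convergence}.
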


\begin{proof}
Since  $\M$ has bounded Ricci curvature, the estimate in
\cite[(1.3)]{wang2004gradient} gives
\[
\|\nabla P_t f\|_{L^\infty( \M)}
\le
C_{ \M}t^{-1/2}\|f\|_{L^\infty( \M)}
\]
for nonnegative bounded measurable $f$. If
$M:=\|\phi\|_{L^\infty}$, then $f:=\phi+M$ is nonnegative,
$\|f\|_{L^\infty}\le 2M$, and $P_t1=1$, hence
$\nabla P_t f=\nabla P_t\phi$. Therefore
\[
\|\nabla P_t\phi\|_{L^\infty}
\le
2C_{ \M}t^{-1/2}\|\phi\|_{L^\infty},
\]
which proves the claim.
\end{proof}

\section{Entropy Sketch for Uniform Convergence}\label{app:entropy_convergence}

Throughout this appendix section we assume, in addition, that \(
\Ric\ge0\) on $\M$.
The goal is to show how one may obtain a uniform-in-time convergence
estimate by following the relative entropy between $\mu_b$ and the heat flow
$\mu$.  We use the heat-flow estimates already collected in
\Cref{lem:heat_flow_bounds} and do not repeat them here.

\begin{lemma}[Bochner entropy identity]\label{lem:entropy_bochner}
Let $u\in C^\infty(\M)$ be positive. Then
\begin{align*}
&\int_\M \Delta u
\left(\Delta\log u+\frac12|\nabla\log u|^2\right)\,dx
\\
&\qquad =
\int_\M u\left(
|\Hess(\log u)|^2+\Ric(\nabla\log u,\nabla\log u)
\right)\,dx .
\end{align*}
In particular, under $\Ric\ge0$, the right-hand side is nonnegative.
\end{lemma}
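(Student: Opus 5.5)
The plan is to set $f:=\log u$, so that $\nabla u=u\nabla f$ and $\Delta u=u(\Delta f+|\nabla f|^2)$. This reduces the left-hand side to integrals of $u$ against polynomial expressions in derivatives of $f$. We then compare these with the integrated Bochner formula.

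\textbf{Rewriting the left-hand side.} Denote the left-hand side by $L$. Integrating by parts once on the closed manifold $\M$ gives
\[
L=-\int_\M \nabla u\cdot\nabla\Bigl(\Delta f+\tfrac12|\nabla f|^2\Bigr)\,dx
=-\int_\M u\,\nabla f\cdot\nabla\Delta f\,dx-\frac12\int_\M u\,\nabla f\cdot\nabla|\nabla f|^2\,dx .
\]

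\textbf{Bochner's formula.} The pointwise identity reads
\[
\tfrac12\Delta|\nabla f|^2=|\Hess f|^2+\nabla f\cdot\nabla\Delta f+\Ric(\nabla f,\nabla f).
\]
Multiplying by $u$ and integrating, we get
\[
\int_\M u\bigl(|\Hess f|^2+\Ric(\nabla f,\nabla f)\bigr)\,dx
=\frac12\int_\M u\,\Delta|\nabla f|^2\,dx-\int_\M u\,\nabla f\cdot\nabla\Delta f\,dx .
\]
Integrating by parts in the first term gives
\[
\frac12\int_\M u\,\Delta|\nabla f|^2\,dx=-\frac12\int_\M \nabla u\cdot\nabla|\nabla f|^2\,dx=-\frac12\int_\M u\,\nabla f\cdot\nabla|\nabla f|^2\,dx .
\]
Substituting this back, the right-hand side of the Bochner computation coincides term by term with the expression for $L$ above, which proves the identity.

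\textbf{Nonnegativity.} Under $\Ric\ge0$, the integrand $u(|\Hess f|^2+\Ric(\nabla f,\nabla f))$ is pointwise nonnegative because $u>0$. Hence the right-hand side is nonnegative.

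The only step that needs care is bookkeeping of the sign conventions in Bochner's formula and in the two integrations by parts. Smoothness and positivity of $u$ justify every step, and no boundary terms arise since $\M$ is closed.
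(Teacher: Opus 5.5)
Your proof is correct and follows essentially the same route as the paper: set $f=\log u$, integrate by parts, insert the integrated Bochner formula, and integrate by parts once more. The only difference is bookkeeping. The paper moves the Laplacian onto $u$ via $\int u\,\Delta|\nabla f|^2=\int \Delta u\,|\nabla f|^2$, whereas you reduce both sides to first-order pairings of the form $\int u\,\nabla f\cdot\nabla(\cdot)$.
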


\begin{proof}
Write $f:=\log u$, so that $\nabla u=u\nabla f$ and
$\Delta u=\divg(u\nabla f)$.  By integration by parts and the Bochner formula,
\[
\int_\M \Delta u\,\Delta f\,dx
=
-\int_\M u\langle\nabla f,\nabla\Delta f\rangle\,dx
\]
and
\[
\frac12\Delta|\nabla f|^2
=
|\Hess f|^2+\langle\nabla f,\nabla\Delta f\rangle
+\Ric(\nabla f,\nabla f).
\]
Substituting the second identity into the first gives
\[
\int_\M \Delta u\,\Delta f\,dx
=
\int_\M u\bigl(|\Hess f|^2+\Ric(\nabla f,\nabla f)\bigr)\,dx
-\frac12\int_\M u\,\Delta|\nabla f|^2\,dx .
\]
Since $\M$ is closed,
\[
\int_\M u\,\Delta|\nabla f|^2\,dx
=
\int_\M \Delta u\,|\nabla f|^2\,dx,
\]
which proves the claimed identity.
\end{proof}

\begin{lemma}[Entropy dissipation]\label{lem:entropy_fisher_bound}
Let $\mu_b$ solve
\[
\partial_t\mu_b=\divg(\mu_b\nabla\log u_b),
\qquad
\mu_b=u_b-b^2\Delta u_b,
\]
on $\M$.  If
\[
H(\mu_b):=\int_\M \mu_b\log\mu_b\,dx,
\qquad
\mathcal I(u_b):=\int_\M u_b|\nabla\log u_b|^2\,dx,
\]
then
\[
\frac{d}{dt}H(\mu_b(t))
\le
-\frac12\mathcal I(u_b(t)).
\]
\end{lemma}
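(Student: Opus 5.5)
We differentiate $H(\mu_b)$ along the flow, integrate by parts, and substitute $\mu_b=u_b-b^2\Delta u_b$. This splits the entropy production into a Fisher-type term in $u_b$ and a $b^2$-weighted correction. The correction is then signed by \Cref{lem:entropy_bochner} together with $\Ric\ge0$.

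\textbf{Step 1: entropy production.} Using the equation and mass conservation,
\[
\frac{d}{dt}H(\mu_b)=\int_\M \log\mu_b\,\divg(\mu_b\nabla\log u_b)\,dx=-\int_\M \nabla\mu_b\cdot\nabla\log u_b\,dx=\int_\M \mu_b\,\Delta\log u_b\,dx .
\]
All manipulations are justified because $\mu_b$ is smooth and strictly positive by \Cref{thm:pde_global_smooth_existence}.

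\textbf{Step 2: substitute $\mu_b=u_b-b^2\Delta u_b$.} The first piece is
\[
\int u_b\Delta\log u_b\,dx=-\int \nabla u_b\cdot\nabla\log u_b\,dx=-\mathcal I(u_b).
\]
This leaves
\[
\frac{d}{dt}H(\mu_b)=-\mathcal I(u_b)-b^2\int_\M \Delta u_b\,\Delta\log u_b\,dx .
\]
By \Cref{lem:entropy_bochner} and $\Ric\ge0$,
\[
\int \Delta u_b\,\Delta\log u_b\,dx\ \ge\ -\frac12\int \Delta u_b\,|\nabla\log u_b|^2\,dx ,
\]
and therefore
\[
\frac{d}{dt}H(\mu_b)\le -\mathcal I(u_b)+\frac{b^2}{2}\int_\M \Delta u_b\,|\nabla\log u_b|^2\,dx .
\]

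\textbf{Step 3: absorb the error term (main obstacle).} The sign of $\Delta u_b$ is not controlled, so we need a one-sided pointwise bound. Positivity of $\mu_b$ gives $b^2\Delta u_b=u_b-\mu_b<u_b$, as already used in the proof of \Cref{thm:main_decay}. Hence pointwise
\[
\frac{b^2}{2}\,\Delta u_b\,|\nabla\log u_b|^2\le \frac12\,u_b|\nabla\log u_b|^2 .
\]
Integrating, the error term is at most $\tfrac12\mathcal I(u_b)$. Substituting into Step 2 gives
\[
\frac{d}{dt}H(\mu_b)\le-\frac12\mathcal I(u_b).
\]

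The delicate point is precisely this last step. Bochner alone leaves an indefinite $b^2$ term. It is the constraint $\mu_b>0$, i.e.\ $b^2\Delta u_b<u_b$, that converts it into a fraction of the Fisher information. I would double-check the integration by parts in the Bochner identity on the closed manifold, since no boundary terms may appear.
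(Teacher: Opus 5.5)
Your proposal is correct and follows the paper's argument essentially step by step: differentiate the entropy, substitute $\mu_b=u_b-b^2\Delta u_b$, and sign the $b^2$ correction with \Cref{lem:entropy_bochner} under $\Ric\ge0$. The only cosmetic difference is in the last step. You use the pointwise inequality $b^2\Delta u_b<u_b$, whereas the paper substitutes the identity $b^2\Delta u_b=u_b-\mu_b$ and then discards the nonpositive term $-\frac12\int_\M\mu_b|\nabla\log u_b|^2\,dx$; both amount to the same use of $\mu_b\ge0$.
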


\begin{proof}
Differentiating the entropy and using the continuity equation gives
\[
\frac{d}{dt}H(\mu_b)
=
\int_\M \log\mu_b\,\divg(\mu_b\nabla\log u_b)\,dx
=
-\int_\M \langle\nabla\mu_b,\nabla\log u_b\rangle\,dx .
\]
Since $\mu_b=u_b-b^2\Delta u_b$,
\[
\frac{d}{dt}H(\mu_b)
=
-\mathcal I(u_b)
+b^2\int_\M \langle\nabla\Delta u_b,\nabla\log u_b\rangle\,dx .
\]
Integrating the second term by parts and applying
\Cref{lem:entropy_bochner} with $u=u_b$ yields
\[
b^2\int_\M \langle\nabla\Delta u_b,\nabla\log u_b\rangle\,dx
=
-b^2\int_\M \Delta u_b\,\Delta\log u_b\,dx
\le
\frac{b^2}{2}\int_\M \Delta u_b|\nabla\log u_b|^2\,dx .
\]
Using $b^2\Delta u_b=u_b-\mu_b$, we obtain
\[
\frac{d}{dt}H(\mu_b)
\le
-\mathcal I(u_b)
+\frac12\int_\M (u_b-\mu_b)|\nabla\log u_b|^2\,dx
=
-\frac12\mathcal I(u_b)
-\frac12\int_\M \mu_b|\nabla\log u_b|^2\,dx ,
\]
and the last term is nonpositive.
\end{proof}

\begin{proposition}[Relative entropy comparison]\label{prop:relative_entropy_convergence}
Let $\mu$ be the heat flow with initial datum $\mu_{\mathrm{in}}$, and let $\mu_b$ solve
the regularized equation with the same initial datum.  Then
\begin{align*}
\frac{d}{dt}\mathcal H(\mu_b(t)\mid\mu(t))
&\le
-\int_\M
\mu_b|\nabla\log u_b-\nabla\log\mu|^2\,dx
\\
&\quad
+Cb^2\left(
\|\nabla\log\mu\|_{L^\infty(\M)}^2\mathcal I(u_b)
+\|\Delta\log\mu\|_{L^\infty(\M)}^2
\right),
\end{align*}
where
\[
\mathcal H(\mu_b\mid\mu)
:=
\int_\M \mu_b\log\frac{\mu_b}{\mu}\,dx .
\]
Consequently,
\[
\sup_{t\ge0}\mathcal H(\mu_b(t)\mid\mu(t))
\le
C(\M,\mu_{\mathrm{in}})b^2 .
\]
\end{proposition}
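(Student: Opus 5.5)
The plan is to compute the time derivative of $\mathcal H(\mu_b\mid\mu)$ directly, write both equations as continuity equations, and isolate the exact defect between them. The heat equation reads $\partial_t\mu=\divg(\mu\nabla\log\mu)$, and \eqref{eq:pde2} reads $\partial_t\mu_b=\divg(\mu_b\nabla\log u_b)$. Differentiating under the integral and integrating by parts should give the standard two-flow identity
\[
\frac{d}{dt}\mathcal H(\mu_b\mid\mu)=-\int_\M \mu_b\,\langle\nabla\log u_b-\nabla\log\mu,\ \nabla\log\mu_b-\nabla\log\mu\rangle\,dx .
\]
I then split $\nabla\log\mu_b=\nabla\log u_b+(\nabla\log\mu_b-\nabla\log u_b)$. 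The first piece produces the dissipation $-\int\mu_b|\nabla\log u_b-\nabla\log\mu|^2$. For the second piece, I use $\mu_b=u_b-b^2\Delta u_b$, as in the proof of \Cref{prop:w1_b2}, to get
\[
\mu_b(\nabla\log\mu_b-\nabla\log u_b)=\nabla\mu_b-\mu_b\nabla\log u_b=-b^2F_b,
\qquad F_b=\nabla\Delta u_b-(\Delta u_b)\nabla\log u_b .
\]
So the whole error is the single term $b^2\int\langle F_b,\nabla\log u_b-\nabla\log\mu\rangle$, and every remainder carries an explicit factor $b^2$.

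Next I estimate this error. For the part paired with $\nabla\log u_b$, I reuse the Bochner computation of \Cref{lem:entropy_bochner} and \Cref{lem:entropy_fisher_bound}, which needs $\Ric\ge0$. It gives $b^2\int\langle\nabla\Delta u_b,\nabla\log u_b\rangle\le\frac{b^2}{2}\int\Delta u_b|\nabla\log u_b|^2$. Substituting $b^2\Delta u_b=u_b-\mu_b$ then leaves only quantities of order $\mathcal I(u_b)$ and $\int\mu_b|\nabla\log u_b|^2$. The part paired with $\nabla\log\mu$ I integrate by parts so that derivatives fall on $\log\mu$, giving
\[
b^2\int\Delta u_b\,\Delta\log\mu\,dx \quad\text{and}\quad b^2\int\Delta u_b\,\langle\nabla\log u_b,\nabla\log\mu\rangle\,dx .
\]
In the first, I move one more derivative back onto $u_b$ and use Cauchy--Schwarz, $|\nabla u_b|\le u_b^{1/2}\,(u_b|\nabla\log u_b|^2)^{1/2}$, together with $\int u_b=1$. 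In the second, I use Young's inequality. Each term should then be bounded by $Cb^2\bigl(\|\nabla\log\mu\|_{L^\infty}^2\mathcal I(u_b)+\|\Delta\log\mu\|_{L^\infty}^2\bigr)$, plus a small multiple of $\mathcal I(u_b)$ or of the dissipation, which is absorbed.

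The \emph{main obstacle} is this absorption. The Bochner step yields $+\frac12\int\mu_b|\nabla\log u_b|^2$, which is not small in $b$. I would first try to rewrite this term in the form $\int\mu_b|\nabla\log u_b-\nabla\log\mu|^2+\int\mu_b|\nabla\log\mu|^2$ and trade it against part of the $-\frac12\mathcal I(u_b)$ coming from the Fisher-type term. If this bookkeeping does not close with the stated constants, the fallback is to keep a fraction of the dissipation (for example $-\frac12\int\mu_b|\nabla\log u_b-\nabla\log\mu|^2$) and accept a slightly weaker constant in the differential inequality.

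Finally, to get the uniform bound, I integrate the differential inequality in time, using $\mathcal H(\mu_b(0)\mid\mu(0))=0$ and dropping the nonpositive dissipation. \Cref{lem:entropy_fisher_bound} gives $\int_0^\infty\mathcal I(u_b)\,dt\le 2\bigl(H(\mu_{\mathrm{in}})-\inf H\bigr)\le C$; the infimum is finite because $H\ge0$ on probability densities with normalized volume. \Cref{lem:heat_flow_bounds} gives $\|\nabla\log\mu\|_{L^\infty}+\|\Delta\log\mu\|_{L^\infty}\le Ce^{-\lambda_1t}$, so $\|\nabla\log\mu\|_{L^\infty}^2\le C$ and $\int_0^\infty\|\Delta\log\mu\|_{L^\infty}^2\,dt<\infty$. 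Hence the time integral of the right-hand side is at most $Cb^2$ uniformly in $t$, which gives $\sup_{t\ge0}\mathcal H(\mu_b(t)\mid\mu(t))\le C(\M,\mu_{\mathrm{in}})b^2$.
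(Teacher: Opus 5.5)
There is a genuine gap, and it is exactly the obstacle you flag. Your reduction of the whole error to $b^2\int_{\M}\langle F_b,\nabla\log u_b-\nabla\log\mu\rangle\,dx$ is correct. The problem is that you should not use Bochner on the $\nabla\log u_b$ part. One integration by parts, together with $\Delta\log u_b=\Delta u_b/u_b-|\nabla\log u_b|^2$, gives the exact, curvature-free identity
\[
\int_{\M}\langle F_b,\nabla\log u_b\rangle\,dx
=-\int_{\M}\Delta u_b\,\Delta\log u_b\,dx-\int_{\M}\Delta u_b\,|\nabla\log u_b|^2\,dx
=-\int_{\M}\frac{(\Delta u_b)^2}{u_b}\,dx .
\]
The resulting coercive term $-b^2\int(\Delta u_b)^2/u_b$ is what the paper's estimate rests on.

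Your Bochner step is equivalent to the lossy bound $\int(\Delta u_b)^2/u_b\ge\frac12\int\Delta u_b|\nabla\log u_b|^2$. It discards $\int u_b\bigl(|\Hess\log u_b|^2+\Ric(\nabla\log u_b,\nabla\log u_b)\bigr)$ and replaces a sign-definite quantity by a sign-indefinite one. After substituting $b^2\Delta u_b=u_b-\mu_b$, you are left with $-\frac12\mathcal I(u_b)+\frac12\int\mu_b|\nabla\log u_b|^2=-\frac{b^2}{2}\int\Delta u_b|\nabla\log u_b|^2$. This is formally $O(b^2)$, but nothing you have controls it.

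Your proposed bookkeeping cannot close, because
\[
-\int_{\M}\mu_b|\nabla\log u_b-\nabla\log\mu|^2\,dx+\frac12\int_{\M}\mu_b|\nabla\log u_b|^2\,dx
=-\frac12\int_{\M}\mu_b|\nabla\log u_b-2\nabla\log\mu|^2\,dx+\int_{\M}\mu_b|\nabla\log\mu|^2\,dx .
\]
So any trade against the dissipation leaves a multiple of $\int\mu_b|\nabla\log\mu|^2$ with no factor $b^2$, and $\mathcal I(u_b)$ cannot absorb it. Integrating in time then gives only $\mathcal H\le C$, not $Cb^2$. The fallback with a weaker constant has the same defect.

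The same loss breaks your treatment of the $\nabla\log\mu$ part. For $b^2\int\Delta u_b\langle\nabla\log u_b,\nabla\log\mu\rangle$, Young's inequality needs a term that can absorb $|\Delta u_b|^2$, and after the Bochner step you have none. Integrating by parts instead produces $\Hess\log u_b$, which $\mathcal I(u_b)$ does not control. Your integration by parts of $b^2\int\Delta u_b\,\Delta\log\mu$ is harmless after time integration, but it brings in $\|\nabla\Delta\log\mu\|_{L^\infty}$, which is not in the stated inequality.

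With the exact identity in hand, both terms are handled at once, with no further integration by parts. Set $X:=|\Delta\log\mu|+|\nabla\log\mu|\,|\nabla\log u_b|$ and use the weighted Young inequality $|\Delta u_b|\,X\le\frac{(\Delta u_b)^2}{2u_b}+\frac{u_bX^2}{2}$. This absorbs into half of $b^2\int(\Delta u_b)^2/u_b$ and, using $\int u_b=1$, leaves exactly $Cb^2\bigl(\|\Delta\log\mu\|_{L^\infty}^2+\|\nabla\log\mu\|_{L^\infty}^2\mathcal I(u_b)\bigr)$. Your time-integration step is fine as written.
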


\begin{proof}
Using
\[
\partial_t\mu_b=\divg(\mu_b\nabla\log u_b),
\qquad
\partial_t\mu=\Delta\mu=\divg(\mu\nabla\log\mu),
\]
and mass conservation, differentiation gives
\[
\frac{d}{dt}\mathcal H(\mu_b\mid\mu)
=
\int_\M \partial_t\mu_b(\log\mu_b-\log\mu)\,dx
-\int_\M \frac{\mu_b}{\mu}\partial_t\mu\,dx .
\]
After integration by parts,
\[
\frac{d}{dt}\mathcal H(\mu_b\mid\mu)
=
-\int_\M
\mu_b\langle\nabla\log\mu_b-\nabla\log\mu,
\nabla\log u_b-\nabla\log\mu\rangle\,dx .
\]
Inserting and subtracting $\nabla\log u_b$ in the first factor yields
\begin{align*}
\frac{d}{dt}\mathcal H(\mu_b\mid\mu)
&=
-\int_\M \mu_b|\nabla\log u_b-\nabla\log\mu|^2\,dx
\\
&\quad
-\int_\M
\mu_b\langle\nabla\log\mu_b-\nabla\log u_b,
\nabla\log u_b-\nabla\log\mu\rangle\,dx .
\end{align*}
Let $R$ denote the second line.  From
$\mu_b=u_b-b^2\Delta u_b$,
\[
\mu_b(\nabla\log\mu_b-\nabla\log u_b)
=
-b^2\left(\nabla\Delta u_b-(\Delta u_b)\nabla\log u_b\right).
\]
Therefore
\[
R
=
b^2I_1-b^2I_2,
\]
where
\[
I_1:=
\int_\M
\left\langle\nabla\log u_b,
\nabla\Delta u_b-(\Delta u_b)\nabla\log u_b\right\rangle\,dx
\]
and
\[
I_2:=
\int_\M
\left\langle\nabla\log\mu,
\nabla\Delta u_b-(\Delta u_b)\nabla\log u_b\right\rangle\,dx .
\]
For $I_1$, integration by parts gives the cancellation
\[
I_1
=
-\int_\M \frac{(\Delta u_b)^2}{u_b}\,dx .
\]
For $I_2$,
\[
I_2
=
-\int_\M \Delta\log\mu\,\Delta u_b\,dx
-\int_\M \Delta u_b
\langle\nabla\log\mu,\nabla\log u_b\rangle\,dx .
\]
Hence, by Young's inequality with weight $u_b$,
\begin{align*}
R
&\le
-b^2\int_\M \frac{(\Delta u_b)^2}{u_b}\,dx
+b^2\int_\M \frac{|\Delta u_b|}{\sqrt{u_b}}\sqrt{u_b}
\left(
|\Delta\log\mu|
+|\nabla\log\mu|\,|\nabla\log u_b|
\right)\,dx
\\
&\le
-\frac{b^2}{2}\int_\M \frac{(\Delta u_b)^2}{u_b}\,dx
+Cb^2\int_\M u_b
\left(
|\Delta\log\mu|^2
+|\nabla\log\mu|^2|\nabla\log u_b|^2
\right)\,dx
\\
&\le
Cb^2\left(
\|\Delta\log\mu\|_{L^\infty(\M)}^2
+\|\nabla\log\mu\|_{L^\infty(\M)}^2\mathcal I(u_b)
\right),
\end{align*}
where we used $\int_\M u_b\,dx=1$.  This proves the differential inequality.

It remains to integrate in time.  Since the initial data agree,
$\mathcal H(\mu_b(0)\mid\mu(0))=0$.  By
\Cref{lem:entropy_fisher_bound},
\[
\int_0^\infty \mathcal I(u_b(t))\,dt
\le
2H(\mu_{\mathrm{in}}).
\]
Moreover, \Cref{lem:heat_flow_bounds} gives exponential decay of the two
logarithmic heat-flow norms appearing above.  Integrating the displayed
differential inequality on $[0,t]$ therefore yields
\[
\mathcal H(\mu_b(t)\mid\mu(t))
\le
C(\M,\mu_{\mathrm{in}})b^2
\qquad\text{for all }t\ge0 .
\]
\end{proof}

\begin{corollary}[Uniform \(L^1\) consequence]\label{cor:entropy_l1_convergence}
Under the assumptions of \Cref{prop:relative_entropy_convergence},
\[
\sup_{t\ge0}\|\mu_b(t)-\mu(t)\|_{L^1(\M)}
\le
C(\M,\mu_{\mathrm{in}})b .
\]
\end{corollary}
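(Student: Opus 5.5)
The plan is to convert the uniform relative entropy bound of \Cref{prop:relative_entropy_convergence} into an $L^1$ bound through the Csisz\'ar--Kullback--Pinsker inequality. Both $\mu_b(t)$ and $\mu(t)$ are probability densities with respect to the normalized volume measure. By \Cref{thm:pde_global_smooth_existence} and \Cref{lem:heat_flow_bounds} they are smooth and strictly positive, so $\mathcal H(\mu_b(t)\mid\mu(t))$ is well defined and finite for every $t\ge0$. Pinsker's inequality then gives, for every $t\ge0$,
\[
\|\mu_b(t)-\mu(t)\|_{L^1(\M)}\le\sqrt{2\,\mathcal H(\mu_b(t)\mid\mu(t))}.
\]

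Taking the supremum over $t\ge0$ and inserting the bound $\sup_{t\ge0}\mathcal H(\mu_b(t)\mid\mu(t))\le C(\M,\mu_{\mathrm{in}})b^2$ from \Cref{prop:relative_entropy_convergence}, we obtain
\[
\sup_{t\ge0}\|\mu_b(t)-\mu(t)\|_{L^1(\M)}\le\sqrt{2C}\,b .
\]
Renaming the constant gives the claim. The square root is exactly why the entropy route yields only the rate $b$, compared with the sharp $b^2$ of \Cref{thm:main_convergence}.

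No step here is a real obstacle; the substance lies in the proposition being invoked. The one point to verify is that the constant in \Cref{prop:relative_entropy_convergence} is uniform in $b$. Its proof integrates the differential inequality using two inputs. The first is the $b$-independent Fisher-information bound $\int_0^\infty\mathcal I(u_b)\,dt\le 2H(\mu_{\mathrm{in}})$ from \Cref{lem:entropy_fisher_bound}. This is finite because $\mu_{\mathrm{in}}$ is bounded, and it also uses $H(\mu_b(t))\ge 0$, which follows from Jensen's inequality since the measure is normalized. The second input is the exponential decay of $\|\nabla\log\mu\|_{L^\infty}$ and $\|\Delta\log\mu\|_{L^\infty}$ from \Cref{lem:heat_flow_bounds}, which depends only on the heat flow.

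These two inputs combine as follows. The term $\|\Delta\log\mu\|_{L^\infty}^2$ is integrable in time thanks to its exponential decay. The term $\|\nabla\log\mu\|_{L^\infty}^2\mathcal I(u_b)$ is integrable because $\|\nabla\log\mu\|_{L^\infty}^2$ is bounded in time and $\mathcal I(u_b)$ has finite time integral. Hence the constant depends only on $\M$ and $\mu_{\mathrm{in}}$, as required.
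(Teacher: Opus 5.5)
Your proposal is correct and matches the paper's proof, which applies the Csisz\'ar--Kullback--Pinsker inequality $\|\mu_b(t)-\mu(t)\|_{L^1}^2\le 2\mathcal H(\mu_b(t)\mid\mu(t))$ and then invokes the uniform $O(b^2)$ relative-entropy bound. Your check that this constant is independent of $b$ (through the Fisher-information integral $\int_0^\infty\mathcal I(u_b)\,dt\le 2H(\mu_{\mathrm{in}})$ and the heat-flow decay) is exactly how the relative-entropy proposition is proved; recall that this appendix also assumes $\Ric\ge0$, which the Fisher-information lemma relies on.
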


\begin{proof}
By the Csisz\'ar-Kullback-Pinsker inequality,
\[
\|\mu_b(t)-\mu(t)\|_{L^1(\M)}^2
\le
2\mathcal H(\mu_b(t)\mid\mu(t)).
\]
The result follows from \Cref{prop:relative_entropy_convergence}.
\end{proof}
\begin{remark}
The relative entropy argument extends, by approximation with smooth positive densities, to give \emph{qualitative} \(L^1\)-convergence for initial data \(\mu_{\mathrm{in}}\in L^\infty(\M)\). We do not pursue this extension here.
\end{remark}
\section{Numerical comparison of kernels}\label{app:numerics}

We compare the particle system \eqref{eq:particle_system_main} on $\Sphere^1$ with $N=256$ for the Bessel--Yukawa kernel \eqref{eq:explicit_circle_resolvent_kernel} and the Gaussian (von Mises) kernel $\widetilde K_\sigma(\theta,\theta')=e^{\sigma\cos(\theta'-\theta)}/I_0(\sigma)$. Both systems start from the same particle configuration, approximating the initial density in \Cref{fig:kernel_dynamics}, and evolve up to $t=10$. Their scales satisfy $I_1(\sigma)/I_0(\sigma)=1/(1+b^2)$, so their first Fourier coefficients agree (here $I_0$ and $I_1$ are modified Bessel functions of the first kind). We report the histogram $L^1$ error and the circular $W_1$ distance to the exact heat solution. Plotted densities are with respect to $\d\theta$.

For these parameters, the Laplace dynamics give smaller errors and approach an evenly spaced configuration, with $W_1$ close to $\pi/(2N)$, the optimal error for approximating the uniform measure by $N$ equally weighted particles. The Gaussian dynamics retain visible density oscillations and a larger error over the simulated time interval.

\begin{figure}[htbp]
\centering
\includegraphics[width=\linewidth]{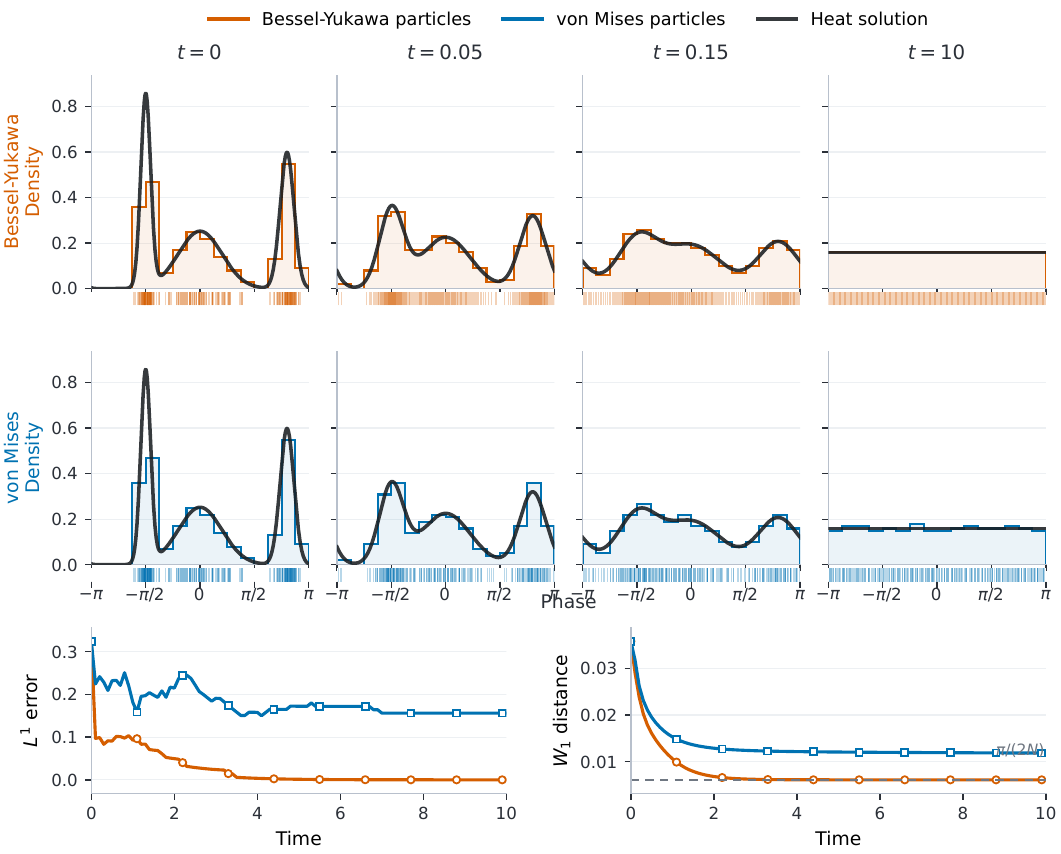}
\caption{Particle approximation of heat flow on $\Sphere^1$ with $N=256$. The top and middle rows show particle histograms for the Bessel--Yukawa and Gaussian kernels, respectively, together with the exact heat solution. Rug plots indicate particle locations. The bottom row shows the kernel profiles, histogram $L^1$ error, and circular $W_1$ distance. The dashed line marks $\pi/(2N)$.}
\label{fig:kernel_dynamics}
\end{figure}

\end{document}